\documentclass[11pt]{article}
\usepackage{amscd}
\usepackage{color}
\usepackage{amsfonts}
\usepackage[all]{xy}
\usepackage{amsmath,amssymb,amsthm,latexsym}
\usepackage{amscd}

\usepackage{bm}
\textwidth 160mm \textheight 230mm

\oddsidemargin=10pt
\evensidemargin=10pt

\topmargin  -15mm
\newtheorem{theorem}{Theorem}[section]

\newtheorem{definition}[theorem]{Definition}
\newtheorem{corollary}[theorem]{Corollary}
\newtheorem{lemma}[theorem]{Lemma}
\numberwithin{equation}{section}
\theoremstyle{remark}
\newtheorem{remark}[theorem]{Remark}
\newtheorem{example}[theorem]{\bf Example}
\newcommand{\R}{\mathbb{R}}
\newcommand{\C}{\mathbb{C}}
\newcommand{\D}{\mathbb{D}}

\newcommand{\FC}{\mathcal{C}}
\begin{document}
\title{\bf{On symmetric Willmore surfaces in spheres I: the orientation preserving case}}
\author{Josef Dorfmeister\footnote{Fakult\" at f\" ur Mathematik, TU-M\" unchen, Boltzmann str. 3, D-85747, Garching, Germany.
 dorfm@ma.tum.de \vspace{0.1mm}
} , Peng Wang \footnote{
  Department of Mathematics, Tongji University, Siping Road 1239, Shanghai, 200092, P. R. China.  \vspace{3mm}   \leftline{{netwangpeng@tongji.edu.cn}   \vspace{0.1mm} \hspace{95mm}}
   \vspace{1mm}  {\bf \ \ ~~Keywords:} Symmetry; Willmore surfaces;  Willmore sphere; invariant potentials.}
  }
\date{}
\maketitle

\begin{center}
{\bf Abstract}
\end{center}

In this paper we provide a systematic discussion of how to incorporate orientation preserving symmetries into the treatment of Willmore surfaces via the loop group method.

 In this context we first develop a general treatment of Willmore surfaces admitting  orientation preserving symmetries, and then show how to induce finite order rotational symmetries. We also prove, for the symmetric space which is the target space of the conformal Gauss map of Willmore surfaces in spheres, the  longstanding conjecture of the existence of meromorphic invariant potentials for the conformal Gauss maps of all compact Willmore surfaces in spheres. We also illustrate our results  by some concrete examples.

\section{Introduction}

 Surfaces with symmetries are always of interest, since they provide examples with nice behavior and frequently there also are some basic principles behind these examples.  Therefore, surfaces with (many)  symmetries are a frequently occurring topic in geometry papers. In \cite{DoWa1}, we began the study of Willmore surfaces by  the loop group method, a method already successfully used to investigate  other surface classes, like CMC surfaces in  $\R^3$, $\mathbb{H}^3$ and affine spheres (See for example \cite{Do;Osaka} and reference therein). It is therefore natural now to discuss  symmetries of Willmore surfaces in the framework of the loop group approach. The  study  of orientation preserving symmetries of Willmore surfaces is the main topic of this paper and the orientation reversing case as well as non-orientable Willmore surfaces will be treated in another paper.

A basic question certainly is what an effect a symmetry of a Willmore surface will have on the extended frames, the Maurer-Cartan forms, the corresponding conformal Gauss map, the holomorphic or meromorphic frames, and the potentials.

On the other hand, a Willmore surface $M$ with topology can be looked upon as an immersion from its universal covering $\tilde{M}$, with the fundamental group  $\pi_1(M)$  acting (invariantly) on the immersion. While this action
 $\pi_1(M)$ is trivial,  it turns out that  $\pi_1(M)$ acts generally non-trivially as a group of symmetries on each member of the associated family of the original immersion.

    We start by recalling some basic notation and some basic results concerning harmonic maps via loop group theory. This is the main contents of Section 2.  In Section 3 we start the discussion of Willmore surfaces with symmetries. First we show that in general a symmetry of the image of an immersion will induce a symmetry of the surface. This yields four kinds of possibilities.
Here we will consider orientation preserving symmetries.

In Section 4, we introduce the notion of a monodromy (loop) matrix and determine how it enters the transformation formulas for the extended frames, the holomorphic/meromorphic frames and the associated families of Willmore immersions. It turns out that the transformation formulas also involve some gauges. In particular, the transformation formula for the potentials
( differential one-forms generating all Willmore immersions  in the loop group formalism) only involves gauges.

Most natural symmetries of a surface are finite order ``rotations''. We thus consider as first applications of our general theory  symmetries of finite order. This type of symmetry of a Willmore surface is characterized in Section 4.2 by a very simple transformation behavior of its potential. We end Section 4 with some examples, illustrating our results.

In section 5 we discuss the fundamental group as a group of symmetries.
It turns out that if we consider a Willmore immersion from a Riemann surface $M$ into $S^{n+2}$, then such a Willmore surface can be generated from an invariant potential. The potential can be chosen to be holomorphic, if $M$ is non-compact and will be meromorphic if $M$ is compact (of any genus). Thus to generate a Willmore immersion of $M$ one can start from some holomorphic/meromorphic differential one-form on $M$ and pull it back to the universal cover $\tilde{M}$. The only additional property still needed  to obtain an immersion defined on $M$ is a closing condition
of the monodromy matrices, which is usually only satisfied for a finite number of values of the loop parameter. In particular, the fundamental group  $\pi_1(M)$ will actually generally act as a non-trivial group of symmetries on almost all surfaces of the associated family. (The details can be found in section 5.)

 In Section 6 we provide the proof  the existence of invariant meromorphic potentials for Willmore immersions from compact Riemann surfaces to $S^{n+2}$.  This solves a longstanding conjecture.
The proof for the non-compact case can be taken almost verbatim from \cite{Do-Ha5}.

We end this paper by considering Willmore surfaces $f : \D \rightarrow S^{n+2}$ which induce complete metrics. We show that in this case, for  every
symmetry $R$ of $f$, there exists a conformal automorphism $\gamma$ of $\D$ such that $f(\gamma.z) = Rf(z)$ holds for all $z\in \D$.

{\it Throughout this paper, by a ''surface'' we mean a ''branched surface''
unless stated explicitly otherwise.}

\section{Review of basic  notation and basic results}

In \cite{DoWa1} we have presented a detailed discussion of the basic loop group approach to the theory of Willmore surfaces in spheres.
In this introductory section we recall some notation and some results and refer for more details to \cite{DoWa1}.

 Let $G$ be the connected, real, semi-simple non-compact matrix Lie group $G = SO^+(1,n+3)$. Let $G/K$ be the inner symmetric space defined by  the involution $\sigma: G\rightarrow G$, given by $\sigma = Ad\dot{S}$, where
 $\dot{S} = diag(-I_4, I_n) $ and $K = Fix(G)^{\sigma} = SO^+ (1,3) \times SO(n)$, a connected, real, semi-simple subgroup of $SO^+(1,n+3)$.
Note that $G/K$ carries a left-invariant non-degenerate symmetric bilinear form, derived from a bi-invariant metric (the Killing form) on $G$.

Let $\mathfrak{g} = \mathfrak{so}(1,n+3)$ and
$\mathfrak{k} = \mathfrak{so}(1,3) \times \mathfrak{so}(n)$  denote the
Lie algebras of $G$ and $K$ respectively. The involution $\sigma$ induces
a decomposition of $\mathfrak{g}$ into eigenspaces,  the (generalized) Cartan decomposition
$$\mathfrak{g}=\mathfrak{k}\oplus\mathfrak{p},\hspace{5mm} \hbox{ with }\  [\mathfrak{k},\mathfrak{k}]\subset\mathfrak{k},
~~~ [\mathfrak{k},\mathfrak{p}]\subset\mathfrak{p}, ~~~
[\mathfrak{p},\mathfrak{p}]\subset\mathfrak{k}.$$
Let $\pi:G\rightarrow G/K$ denote the projection of $G$ onto $G/K$.

Now let  $\mathfrak{g^{\mathbb{C}}} = \mathfrak{so}(1,n+3,\C)$ be the complexification of $\mathfrak{g}$ and $G^{\mathbb{C}} = SO(1,n+3,\C)$  the connected complex (matrix) Lie group with Lie algebra $\mathfrak{g^{\mathbb{C}}}$.
Let $\tau$ denote the complex anti-holomorphic involution
$g \rightarrow \bar{g}$ of $G^{\mathbb{C}}$.
Then $G=Fix^{\tau}(G^{\C})^0$, where $H^\circ$ denotes the identity component of the group $H$.
The inner involution $\sigma: G \rightarrow G $ commutes with the complex conjugation $\tau$ and extends to
the complexified Lie group $G^\C$, $\sigma: G^{\mathbb{C}}\rightarrow G^{\mathbb{C}}$. Then  $K^{\mathbb{C}} = \hbox{Fix}^{\sigma}(G^{\mathbb{C}})^0$ is the smallest (connected) complex subgroup of $G^{\C}$ containing $K$. The Lie algebra of $K^{\mathbb{C}}$ is
$\mathfrak{k^{\mathbb{C}}} = \mathfrak{so}(1,3,\C) \times \mathfrak{so}(n, \C) $.


\subsection{Harmonic maps into symmetric spaces}

Let $G/K$ be the symmetric space above. Let $\mathcal{F}:M\rightarrow G/K$ be a harmonic map from a connected Riemann surface $M$.
Let $U\subset M$ be an open contractible subset.
Then there exists a frame $F: U\rightarrow G$ such that $\mathcal{F}=\pi\circ F$.
Let $\alpha$ denote the Maurer-Cartan form of $F$. Then $\alpha$ satisfies the Maurer-Cartan equation
and altogether we have
\begin{equation*}F^{-1}d F= \alpha, \hspace{5mm} d \alpha+\frac{1}{2}[\alpha\wedge\alpha]=0.
\end{equation*}
Decomposing $\alpha$ with respect to $\mathfrak{g}=\mathfrak{k}\oplus\mathfrak{p}$ we obtain
$$\alpha=\alpha_{ \mathfrak{k}  } +\alpha_{ \mathfrak{p} }, \
\alpha_{\mathfrak{k  }}\in \Gamma(\mathfrak{k}\otimes T^*M),\
\alpha_{ \mathfrak{p }}\in \Gamma(\mathfrak{p}\otimes T^*M).$$ Moreover, considering the complexification $TM^{\mathbb{C}}=T'M\oplus T''M$, we decompose $\alpha_{\mathfrak{p}}$ further into the $(1,0)-$part $\alpha_{\mathfrak{p}}'$ and the $(0,1)-$part $\alpha_{\mathfrak{p}}''$. Set
 \begin{equation}\label{spec-par}
\alpha_{\lambda}=
\lambda^{-1}\alpha_{\mathfrak{p}}'+\alpha_{\mathfrak{k}}+
\lambda\alpha_{\mathfrak{p}}'', \hspace{5mm}  \lambda\in S^1.
\end{equation}

\begin{lemma} $($\cite{DPW}$)$ The map  $\mathcal{F}:M\rightarrow G/K$ is harmonic if and only if
\begin{equation}\label{integr}d
\alpha_{\lambda}+\frac{1}{2}[\alpha_{\lambda}\wedge\alpha_{\lambda}]=0,\ \ \hbox{for all}\ \lambda \in S^1.
\end{equation}
\end{lemma}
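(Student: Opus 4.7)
The plan is to expand the curvature expression $d\alpha_\lambda + \tfrac{1}{2}[\alpha_\lambda \wedge \alpha_\lambda]$ as a Laurent polynomial in the loop parameter $\lambda$, and show that its vanishing for all $\lambda \in S^1$ is equivalent to the conjunction of two facts: (i) the ordinary Maurer-Cartan equation for $\alpha = F^{-1}dF$, which always holds, and (ii) the harmonic map equation for $\mathcal{F}$. Since the monomials $\{\lambda^k\}_{k\in\mathbb{Z}}$ are linearly independent as functions on $S^1$, the equation holding for all $\lambda$ is equivalent to the vanishing of each $\lambda^k$-coefficient individually.

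Abbreviate $A = \alpha'_{\mathfrak{p}}$, $B = \alpha_{\mathfrak{k}} = B' + B''$ (its $(1,0)$- and $(0,1)$-parts), and $C = \alpha''_{\mathfrak{p}}$. A direct expansion, aided by the Cartan bracket relations $[\mathfrak{k},\mathfrak{k}]\subset\mathfrak{k}$, $[\mathfrak{k},\mathfrak{p}]\subset\mathfrak{p}$, $[\mathfrak{p},\mathfrak{p}]\subset\mathfrak{k}$, yields
\begin{equation*}
d\alpha_\lambda + \tfrac{1}{2}[\alpha_\lambda \wedge \alpha_\lambda] = \tfrac{\lambda^{-2}}{2}[A\wedge A] + \lambda^{-1}\bigl(dA + [A\wedge B]\bigr) + \bigl(dB + \tfrac{1}{2}[B\wedge B] + [A\wedge C]\bigr) + \lambda\bigl(dC + [B\wedge C]\bigr) + \tfrac{\lambda^{2}}{2}[C\wedge C].
\end{equation*}
Because $M$ is one-complex-dimensional, the pure $(2,0)$-form $[A\wedge A]$ and the pure $(0,2)$-form $[C\wedge C]$ vanish identically, disposing of the $\lambda^{\pm 2}$-coefficients. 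The $\lambda^0$-coefficient is precisely the $\mathfrak{k}$-projection of the ordinary Maurer-Cartan equation $d\alpha + \tfrac{1}{2}[\alpha\wedge\alpha] = 0$, so it too vanishes automatically since $F$ is a smooth frame.

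Only the $\lambda^{\pm 1}$-coefficients carry nontrivial content. Using $dA = \bar\partial A$ on a Riemann surface and noting that $[A\wedge B']$ is of $(2,0)$-type and hence vanishes, the $\lambda^{-1}$-coefficient reduces to $U := \bar\partial A + [A\wedge B'']$; symmetrically, the $\lambda^{1}$-coefficient reduces to $V := \partial C + [B'\wedge C]$. The $(1,1)$-part of the $\mathfrak{p}$-Maurer-Cartan equation, automatic from $\alpha = F^{-1}dF$, gives $U + V = 0$. A short computation using $\ast A = -iA$ and $\ast C = iC$ shows that the harmonic map equation $d\!\ast\!\alpha_{\mathfrak{p}} + [\alpha_{\mathfrak{k}}\wedge \ast\alpha_{\mathfrak{p}}] = 0$ is equivalent to $U - V = 0$. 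Together these two identities force $U = V = 0$, which is exactly the vanishing of the $\lambda^{\pm 1}$-coefficients. Conversely, $U = V = 0$ visibly implies both the $(1,1)$-part of the $\mathfrak{p}$-Maurer-Cartan equation and the harmonic map equation, completing the equivalence.

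The main obstacle is bookkeeping rather than conceptual: one must juggle simultaneously the Cartan decomposition $\mathfrak{g} = \mathfrak{k} \oplus \mathfrak{p}$ and the $(p,q)$-type decomposition on $M$, and keep the signs consistent when passing between the Hodge $\ast$-formulation and the $\bar\partial$-formulation of the harmonic map equation. Once these identifications are pinned down, the equivalence is simply the reading-off of coefficients from a single Laurent polynomial identity in $\lambda$.
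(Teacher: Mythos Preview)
Your argument is correct and is precisely the standard proof of this classical lemma: expand the loop Maurer--Cartan form as a Laurent polynomial in $\lambda$, kill the $\lambda^{\pm 2}$ terms by bidegree, identify the $\lambda^0$ term with the $\mathfrak{k}$-part of the ordinary Maurer--Cartan equation, and recognize the $\lambda^{\pm 1}$ terms as the sum and difference of the $\mathfrak{p}$-Maurer--Cartan constraint and the harmonic map equation. The paper itself does not give a proof---the lemma is simply cited from \cite{DPW}---so there is nothing to compare against beyond noting that what you have written is exactly the argument one finds in that reference and in the standard literature on harmonic maps via loop groups.
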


\begin{definition} Let $\mathcal{F}:M\rightarrow G/K$ be harmonic
and $\alpha_{\lambda}$ the differential one-form defined above. Since, by the lemma,
$\alpha_{\lambda}$ satisfies the integrability condition \eqref{integr}, we consider
on $U \subset M$
the solution $F(z,\lambda)$, to the equation
$$d F(z,\lambda)= F(z, \lambda)\alpha_{\lambda}$$
with the initial condition $F(z_0,\lambda)=F(z_0)\in K$,
where $z_0 \in U$ is a fixed base point. The map $F(z, \lambda)$
is called the {\em extended frame} of the harmonic map $\mathcal{F}$ normalized at the base point $z=z_0$. Note that it satisfies $F(z,\lambda =1)=F(z)$.
 \end{definition}

\subsection{Loop groups, decomposition theorems, and the loop group method}

For the construction of (new) Willmore surfaces in spheres we will employ the loop group method. In this context we consider the twisted loop groups of $G$ and $G^{\mathbb{C}}$
and some of their frequently occurring subgroups:
\begin{equation*}
\begin{array}{llll}
\Lambda G^{\mathbb{C}}_{\sigma} ~&=\{\gamma:S^1\rightarrow G^{\mathbb{C}}~|~ ,\
\sigma \gamma(\lambda)=\gamma(-\lambda),\lambda\in S^1  \},\\[1mm]
\Lambda G_{\sigma} ~&=\{\gamma\in \Lambda G^{\mathbb{C}}_{\sigma}
|~ \gamma(\lambda)\in G, \hbox{for all}\ \lambda\in S^1 \},\\[1mm]
\Omega G_{\sigma} ~&=\{\gamma\in \Lambda G_{\sigma}|~ \gamma(1)=e \},\\[1mm]
\Lambda_{*}^{-} G^{\mathbb{C}}_{\sigma} ~&=\{\gamma\in \Lambda G^{\mathbb{C}}_{\sigma}~
|~ \gamma \hbox{ extends holomorphically to }D_{\infty},\  \gamma(\infty)=e \},\\[1mm]
\Lambda^{+} G^{\mathbb{C}}_{\sigma} ~&=\{\gamma\in \Lambda G^{\mathbb{C}}_{\sigma}~
|~ \gamma \hbox{ extends holomorphically to }D_{0} \},\\[1mm]
\Lambda_{S}^{+} G^{\mathbb{C}}_{\sigma} ~&=\{\gamma\in
\Lambda G^{\mathbb{C}}_{\sigma}~|~   \gamma(0)\in S \},\\[1mm]
\end{array}\end{equation*}
where $D_0=\{z\in \mathbb{C}| \ |z|<1\}$, $D_\infty=\{z\in \mathbb{C}| \ |z|>1\}$ and $S$ is some subgroup of $K^\C$.
\vspace{2mm}

In \cite{DoWa1} we have shown that there exists some closed, connected, solvable Lie subgroup $S$ of $K^\C$ such that $K \times S \rightarrow K^\C$ is a diffeomorphism onto the open subset $K\cdot S$ of $K^\C$.

If the group $S$ is chosen to be $S = (K^\C)^0$, then we write $\Lambda_{\mathcal{C}}^{\pm} G^{\mathbb{C}}_{\sigma} $.

We frequently use the following decomposition theorems.

\begin{theorem} \label{thm-decomposition}(\cite{DPW}, \cite{PS}, \cite{DoWa1}.)

{\em (i)(Iwasawa decomposition)}

$(1) \hspace{2mm} (\Lambda G^{\C})_{\sigma} ^0=
\bigcup_{\delta \in \Xi }( \Lambda G)_{\sigma}^0\cdot \delta\cdot
\Lambda^{+} G^{\mathbb{C}}_{\sigma},$

$(2)$ There exist exactly two open Iwasawa cells in the connected loop group
$(\Lambda G^{\mathbb{C}}_{\sigma})^0$,  one given by $\delta = e$ and the other one by $\delta = diag(-1,1,1,1,-1,1,1,...,1) $.

$(3)$
There exists a closed, connected solvable subgroup $S \subseteq K^\C$ such that
the multiplication $\Lambda G_{\sigma}^0 \times \Lambda^{+}_S G^{\mathbb{C}}_{\sigma}\rightarrow
\Lambda G^{\mathbb{C}}_{\sigma}$ is a real analytic diffeomorphism onto the open subset
$ \Lambda G_{\sigma}^0 \cdot \Lambda^{+}_S G^{\mathbb{C}}_{\sigma}      \subseteq \mathcal{I}^{\mathcal{U}}_e \subset(\Lambda G^{\mathbb{C}}_{\sigma})^0$.

{\em (ii)(Birkhoff decomposition)}

(1) $(\Lambda {G}^\C )^0= \bigcup \Lambda^{-}_{\mathcal{C}} {G}^{\mathbb{C}}_{\sigma} \cdot \omega \cdot \Lambda^{+}_{\mathcal{C}} {G}^{\mathbb{C}}_{\sigma}$
where the $\omega$'s are representatives of the double cosets .

(2) The multiplication $\Lambda_{*}^{-} {G}^{\mathbb{C}}_{\sigma}\times
\Lambda^{+}_\FC {G}^{\mathbb{C}}_{\sigma}\rightarrow
\Lambda {G}^{\mathbb{C}}_{\sigma}$ is an analytic  diffeomorphism onto the
open and dense subset $\Lambda_{*}^{-} {G}^{\mathbb{C}}_{\sigma}\cdot
\Lambda^{+}_\FC {G}^{\mathbb{C}}_{\sigma}$ {\em ( big Birkhoff cell )}.

\end{theorem}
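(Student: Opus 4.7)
The proof combines the standard loop group techniques of \cite{PS} with specific considerations for the non-compact real form $G = SO^+(1,n+3)$, following the strategy laid out in \cite{DPW} and expanded in \cite{DoWa1}. The plan is to handle the Iwasawa and Birkhoff decompositions separately; the Birkhoff part is purely complex-analytic, while the Iwasawa part requires genuine attention to the real structure of the underlying Banach Lie group.

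For parts (i)(1) and (i)(3), the first step is a local analysis at the identity. Choosing $S$ so that the finite-dimensional map $K \times S \to K^{\C}$ is a diffeomorphism onto the open subset $K \cdot S$ (already cited from \cite{DoWa1} immediately before the theorem), the corresponding Lie algebra splitting extends pointwise to a topological direct sum $\Lambda \mathfrak{g}_{\sigma} \oplus \Lambda^+_S \mathfrak{g}^{\C}_{\sigma} = \Lambda \mathfrak{g}^{\C}_{\sigma}$. The Banach implicit function theorem then shows that multiplication $\Lambda G_{\sigma}^0 \times \Lambda^+_S G^{\C}_{\sigma} \to \Lambda G^{\C}_{\sigma}$ is a local diffeomorphism at the identity, hence onto an open subset $\mathcal{I}^{\mathcal{U}}_e$. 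For (i)(1), one then stratifies the full double coset space $(\Lambda G)^0_{\sigma} \backslash (\Lambda G^{\C}_{\sigma})^0 / \Lambda^+ G^{\C}_{\sigma}$ following Kellersch's thesis, taking the representatives $\delta$ in a suitable torus and identifying the lower-dimensional cosets as walls where the factorization degenerates.

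The main obstacle is part (i)(2), the sharp enumeration showing exactly two open Iwasawa cells. I would reduce this to a combinatorial statement: open cells are those indexed by $\delta$ for which the adjoint action is compatible with both the reality condition and the $\sigma$-twisting and which moreover lie in the identity component $(\Lambda G^{\C}_{\sigma})^0$. This amounts to classifying the connected components of a finite set of twist-compatible elements in a maximal torus. For $G = SO^+(1,n+3)$ with $\sigma = \mathrm{Ad}\,\dot{S}$ and $\dot{S} = \mathrm{diag}(-I_4, I_n)$, a direct matrix computation identifies this set with $\{e,\ \mathrm{diag}(-1,1,1,1,-1,1,\dots,1)\}$, the second element encoding the $\mathbb{Z}_2$ obstruction arising from the disconnectedness of the Lorentz centralizer of $\dot S$. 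This is the genuinely group-theoretic step and is where almost all the work lies.

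Part (ii) is the complex Birkhoff decomposition, which is almost automatic from Pressley-Segal \cite{PS} applied to the twisted loop group $\Lambda G^{\C}_{\sigma}$. The representatives $\omega$ in (ii)(1) can be taken as algebraic homomorphisms $S^1 \to T^{\C}$ compatible with $\sigma$, indexed by a twisted affine Weyl-type group. The identity $\omega = e$ yields the big cell of (ii)(2): injectivity of $\Lambda^{-}_* G^{\C}_{\sigma} \times \Lambda^+_{\mathcal{C}} G^{\C}_{\sigma} \to \Lambda G^{\C}_{\sigma}$ follows from the trivial intersection $\Lambda^{-}_* \cap \Lambda^+_{\mathcal{C}} = \{e\}$ using the normalization $\gamma(\infty) = e$; openness of the image is again the Banach implicit function theorem at $e$; and density of the big cell is a standard fact for Banach loop groups, inherited directly from \cite{PS}.
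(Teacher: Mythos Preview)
The paper does not give a proof of this theorem at all: it is stated purely as a recall of known results, with the line ``For more details we refer to \cite{DoWa1} and \cite{Ke1}'' immediately following it. So there is no in-paper argument to compare your proposal against.

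That said, your outline is a faithful sketch of the strategy actually used in the cited sources: the Birkhoff part is essentially Pressley--Segal transported to the twisted setting, and the Iwasawa part combines the Banach implicit function theorem (for local diffeomorphism onto the open cell) with Kellersch's double-coset analysis. Your identification of (i)(2) as the genuinely delicate point is correct --- the enumeration of exactly two open cells for this particular non-compact real form is the specific contribution of \cite{DoWa1}, and your reduction to classifying $\sigma$-compatible torus elements modulo the real structure is the right mechanism. One small caution: your phrase ``disconnectedness of the Lorentz centralizer of $\dot S$'' is a plausible heuristic but not quite the precise reason in \cite{DoWa1}; the two open cells there arise from the two open $K$-orbits in $K^\C/(K^\C \cap B)$ for a suitable Borel $B$, which is related but not identical to what you wrote. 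If you intend to supply an actual proof rather than defer to the references, that step would need to be made precise.
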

For more details we refer to \cite{DoWa1} and \cite{Ke1}.

Loops which have a finite Fourier expansion are called {\it algebraic loops} and
 denoted by the subscript $``alg"$, like $\Lambda_{alg} G_{\sigma},\ \Lambda_{alg} G^{\mathbb{C}}_{\sigma},\
\Omega_{alg} G_{\sigma} $ (see also \cite{BuGu}, \cite{Gu2002} and \cite{DoWa2}). And we define
  \begin{equation}\label{eq-alg-loop}\Omega^k_{alg} G_{\sigma}:=\{\gamma\in
\Omega_{alg} G_{\sigma}|
Ad(\gamma)=\sum_{|j|\leq k}\lambda^jT_j \}\subset \Omega_{alg} G_{\sigma} .\end{equation}

 With the loop group decompositions as stated above, we obtain a
construction scheme of harmonic maps from a surface into $G/K$.

\begin{theorem}\label{thm-DPW}\cite{DPW}, \cite{DoWa1}, \cite{Wu}.
Let $\D$ be a contractible open subset of $\C$ and $z_0 \in \D$ a base point.
Let $\mathcal{F}: \D \rightarrow G/K$ be a harmonic map with $\mathcal{F}(z_0)=eK.$
Then the associated family  $\mathcal{F}_{\lambda}$ of $\tilde{F}$ can be lifted to a map
$F:\D \rightarrow \Lambda G_{\sigma}$, the extended frame of $\mathcal{F}$, and we can assume  w.l.g. that $F(z_0,\lambda)= e$ holds.
Under this assumption,

(1) The map $F$ takes only values in
$ \mathcal{I}^{\mathcal{U}}\subset \Lambda G^{\mathbb{C}}_{\sigma}$.

 (2) There exists a discrete subset $\D_0\subset \D$ such that on $\D\setminus \D_0$
we have the decomposition
$$F(z,\lambda)=F_-(z,\lambda)  F_+(z,\lambda),
$$where $$ F_-(z,\lambda)\in\Lambda_{*}^{-} G^{\mathbb{C}}_{\sigma}
\hspace{2mm} \mbox{and} \hspace{2mm} F_+(z,\lambda)\in \Lambda^{+}_{\FC} G^{\mathbb{C}}_{\sigma}.$$
and $F_-(z,\lambda)$ is meromorphic in $z \in \D$.

Moreover,
$$\eta= F_-(z,\lambda)^{-1} d F_-(z,\lambda)$$
is a $\lambda^{-1}\cdot\mathfrak{p}^{\mathbb{C}}-\hbox{valued}$ meromorphic $(1,0)-$
form with poles at points of $\D_0$ only.

$(3)$ Spelling out the converse procedure in detail we obtain:
Let $\eta$ be a  $\lambda^{-1}\cdot\mathfrak{p}^{\mathbb{C}}-\hbox{valued}$ meromorphic $(1,0)-$ form for which the solution
to the ODE
\begin{equation}
F_-(z,\lambda)^{-1} d F_-(z,\lambda)=\eta, \hspace{5mm} F_-(z_0,\lambda)=e,
\end{equation}
is meromorphic on $\D$, with  $\D_0$ as set of possible poles.
Then on the open set $\D_{\mathcal{I}} = \{  z \in \D\setminus {\D_0}; F(z,\lambda)
\in \mathcal{I}^{\mathcal{U}}\}$ we
define locally $\tilde{F}(z,\lambda)$ via the factorization
 $\mathcal{I}^{\mathcal{U}} =  ( \Lambda G_{\sigma})_o \cdot \Lambda_S^{+} G^{\mathbb{C}}_{\sigma}
\subset  \Lambda G^{\mathbb{C}}_{\sigma}$:
\begin {equation}\label{Iwa}
F_-(z,\lambda)=\tilde{F}(z,\lambda)  \tilde{F}_+(z,\lambda)^{-1}.
\end{equation}
 This way one obtains locally an extended frame
$$ \tilde{F}(z,\lambda)=F_-(z,\lambda)  \tilde{F}_+(z,\lambda)\\$$
of some harmonic map from $ \D_{\mathcal{I}}$ to $G/K$.

$(4) $ Any harmonic map  $\mathcal{F}: \D\rightarrow G/K$ can be derived from a
$\lambda^{-1}\cdot\mathfrak{p}^{\mathbb{C}}-\hbox{valued}$ meromorphic $(1,0)-$ form $\eta$ on $\D$.
Moreover, the two constructions outlined above  are inverse to each other (on appropriate domains of definition), if normalizations at some base point are used.

\end{theorem}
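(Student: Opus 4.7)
I will follow the standard DPW strategy, proving the four parts in order and using the Iwasawa and Birkhoff decompositions of Theorem~\ref{thm-decomposition} together with the harmonicity characterization of the lemma above.

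For (1), $F$ is by construction a solution of the real ODE $dF = F\alpha_\lambda$ with initial value $F(z_0,\lambda) = e \in K$, so $F$ takes values in $\Lambda G_\sigma$. Since $\D$ is contractible and the initial value lies in the identity component, $F$ in fact maps into $\Lambda G_\sigma^0$, which by Theorem~\ref{thm-decomposition}(i)(3) is contained in the open set $\mathcal{I}^{\mathcal{U}}_e \subseteq \mathcal{I}^{\mathcal{U}}$.

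For (2), I apply the Birkhoff decomposition pointwise in $z$. Because the big cell is open and dense and $F$ is analytic in $z$, the set $\D_0 \subset \D$ where $F(z,\cdot)$ leaves the big cell is a proper analytic subvariety of the Riemann surface $\D$, hence discrete. On $\D\setminus \D_0$ the factors depend holomorphically on $z$, and a pole-clearing argument (multiplying $F_-$ on the left by a suitable scalar polynomial in $z - z_*$ near each $z_* \in \D_0$) shows $F_-$ extends meromorphically to $\D$. To identify $\eta := F_-^{-1} dF_-$ I use $F = F_- F_+$ to derive
\begin{equation*}
\eta = F_+\, \alpha_\lambda\, F_+^{-1} - (dF_+)\, F_+^{-1}.
\end{equation*}
The right-hand side contains only powers $\lambda^j$ with $j \geq -1$ (since $\alpha_\lambda$ has leading power $\lambda^{-1}$ and $F_+$ is holomorphic at $\lambda = 0$), while the left-hand side contains only powers $\lambda^j$ with $j \leq -1$ (since $F_-$ is normalized at $\lambda = \infty$). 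Hence $\eta$ is purely of order $\lambda^{-1}$, and its coefficient $F_+(z,0)\, \alpha_\mathfrak{p}'\, F_+(z,0)^{-1}$ is $\mathfrak{p}^\mathbb{C}$-valued (since $\mathrm{Ad}(K^\mathbb{C})$ preserves $\mathfrak{p}^\mathbb{C}$) and of type $(1,0)$, as asserted.

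For (3), I reverse this procedure. Given $\eta$, solve the ODE to get $F_-$, meromorphic in $z$; on $\D_{\mathcal{I}}$ apply the Iwasawa splitting \eqref{Iwa} to write $F_- = \tilde F \cdot \tilde F_+^{-1}$ and compute
\begin{equation*}
\tilde F^{-1} d\tilde F = \tilde F_+\, \eta\, \tilde F_+^{-1} - (d\tilde F_+)\, \tilde F_+^{-1}.
\end{equation*}
Expansion in $\lambda$ shows the $\lambda^{-1}$-coefficient equals $\mathrm{Ad}(\tilde F_+(z,0))$ applied to the $\lambda^{-1}$-coefficient of $\eta$, hence lies in $\mathfrak{p}^\mathbb{C}$ and is of type $(1,0)$; the remaining $\mathfrak{k}$- and $\lambda\cdot\mathfrak{p}''$-parts are then forced by reality of $\tilde F$ (since $\tilde F \in \Lambda G_\sigma$), yielding the shape~\eqref{spec-par}. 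The lemma above then identifies $\pi \circ \tilde F$ as a harmonic map into $G/K$. Part (4) follows by combining (2) and (3): any harmonic map produces an $\eta$ via (2), to which (3) then associates back the original extended frame, by uniqueness of ODE solutions and of Birkhoff/Iwasawa factorizations on the big cells. The main technical step is the meromorphic extension of $F_-$ across $\D_0$ in (2); the remainder is essentially bookkeeping with Laurent expansions in $\lambda$, with the bracket relations $[\mathfrak{k},\mathfrak{p}]\subset\mathfrak{p}$ and $[\mathfrak{p},\mathfrak{p}]\subset\mathfrak{k}$ ensuring the correct Lie-algebraic types throughout.
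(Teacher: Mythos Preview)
The paper does not give its own proof of this theorem; it is stated in the review Section~2 as a known result, with citations to \cite{DPW}, \cite{DoWa1}, \cite{Wu}. Your proposal follows exactly the standard DPW argument from those references and is essentially correct.

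Two small points of care in your write-up of part~(2). First, the extended frame $F$ is only \emph{real}-analytic in $z$ (it depends on $\bar z$), so ``$F$ is analytic in $z$, hence $\D_0$ is a proper analytic subvariety, hence discrete'' is not quite right as stated: a proper real-analytic subset of a surface need not be discrete. The correct argument first shows that $F_-$ is \emph{holomorphic} in $z$ where defined --- and your Laurent comparison actually contains this: applying it to the $d\bar z$-part of $\eta$ (the $(0,1)$-part of $\alpha_\lambda$ involves only $\lambda^0$ and $\lambda^{1}$, so the right-hand side has no negative powers in its $d\bar z$-component) gives $\partial_{\bar z} F_- = 0$. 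Once $F_-$ is holomorphic, the Birkhoff obstruction is a holomorphic condition and $\D_0$ is discrete. Second, the assertion that ``the factors depend holomorphically on $z$'' is false for $F_+$; only $F_-$ is holomorphic.

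The meromorphic extension of $F_-$ across $\D_0$, which you correctly flag as the main technical step, is precisely the content of \cite{DPW} and \cite{Wu}; your pole-clearing sketch is the right idea but would need the finer structure of the small Birkhoff cells to be made rigorous.
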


\begin{remark}
 The restriction above to ``local" factorizations has two reasons: one is that there may be singularities due to the poles in the potentials and the second is that the factorization  (\ref{Iwa}) will not be unique in general. In the latter case the ambiguity is removed when descending to the harmonic map.
Unfortunately, there is yet another reason for a singularity: it will occur, when
the solution $F_-(z,\lambda)$ leaves the open set $\mathcal{I}^{\mathcal{U}}$. Also in this case it can happen that the associated harmonic map is non-singular. But in general singularities will remain.
Therefore, for concrete examples one needs to make sure that no singularities occur.

\end{remark}

\begin{definition}\cite{DPW},\ \cite{Wu}.
The $\lambda^{-1}\cdot \mathfrak{p}^{\mathbb{C}}-\hbox{valued}$ meromorphic $(1,0)$
form $\eta$ is called the {\em normalized potential} for the harmonic
map $\mathcal{F}$ with the point $z_0$ as the reference point. And $F_-(z,\lambda)$ given above is called the {meromorphic extended frame}.
\end{definition}

The normalized potential is uniquely determined, if some base point on $M$ is fixed and the frames are normalized to $e$ at the base point.
The normalized potential is usually meromorphic in $z$.

In many applications it is much more convenient to use potentials which have a Fourier expansion containing more than one power of $\lambda$.
And when permitting many (maybe infinitely many) powers of $\lambda$,  one can even frequently obtain holomorphic coefficients.

\begin{theorem}\cite{DPW}, \cite{DoWa1}.  Let $\D$ be a contractible open subset of $\C$.
Let $F(z,\lambda)$ be the frame of some harmonic map
into $G/K$. Then there exists some $V_+ \in \Lambda^{+} G^{\mathbb{C}}_{\sigma} $ such that $C(z,\lambda) =
F V_+$ is holomorphic in $z\in\mathbb{D}$ and in $\lambda \in \mathbb{C}^*$.
Then the Maurer-Cartan form $\eta = C^{-1} dC$ of $C$ is a holomorphic $(1,0)-$form on $\D$ and it is easy to verify that $\lambda \eta$ is holomorphic for $\lambda \in \C$.

Conversely, Let $\eta\in\Lambda\mathfrak{g}^{\C}_{\sigma}$ be a holomorphic  $(1,0)-$ form such that $\lambda \eta$ is holomorphic for $\lambda\in\C$, then by the same steps as in Theorem \ref{thm-DPW} we obtain a harmonic map $\mathcal{F}: \D\rightarrow G/K$.
\end{theorem}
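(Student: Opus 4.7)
The plan is to treat the forward and converse directions essentially in parallel with the standard DPW procedure already presented in Theorem \ref{thm-DPW}, with the forward direction reducing to the solution of a $\bar\partial$-equation on a contractible domain.

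For the forward direction, I observe first that the extended frame $F(z,\lambda)$ is automatically holomorphic in $\lambda\in\C^*$: the integrability condition \eqref{integr} is a polynomial identity in $\lambda^{\pm 1}$, and the ODE $dF = F\alpha_\lambda$ has coefficients depending holomorphically on $\lambda\in\C^*$. So the only work is to find a map $V_+\colon \D \to \Lambda^{+}G^{\C}_{\sigma}$ with $C = FV_+$ holomorphic in $z\in\D$. Decomposing $F^{-1}dF = \alpha_\lambda$ into $(1,0)$- and $(0,1)$-parts, the $(0,1)$-part is
\[
F^{-1}\bar\partial F = \alpha_{\mathfrak k}'' + \lambda\,\alpha_{\mathfrak p}'',
\]
which takes values in $\Lambda^{+}\mathfrak g^{\C}_{\sigma}$. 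The holomorphy condition $\bar\partial C = 0$ is then equivalent to the linear system
\[
\bar\partial V_+ = -(\alpha_{\mathfrak k}'' + \lambda\,\alpha_{\mathfrak p}'')\,V_+,
\]
whose right-hand side stays in $\Lambda^{+}\mathfrak g^{\C}_{\sigma}$. Since $\D$ is contractible, the standard Koszul--Malgrange/Newlander--Nirenberg argument (applied in the Banach loop-group completion) produces a smooth solution $V_+$ taking values in $\Lambda^{+}G^{\C}_{\sigma}$ with $V_+(z_0,\lambda)=e$.

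With $C=FV_+$ in hand, holomorphy in $\lambda\in\C^*$ follows from the individual holomorphies, and $\eta = C^{-1}dC = V_+^{-1}\alpha_\lambda V_+ + V_+^{-1}dV_+$ is purely of type $(1,0)$ and holomorphic in $z$. Since both $V_+$ and $V_+^{-1}$ expand as $\sum_{j\geq 0}\lambda^j(\cdots)$, conjugating $\alpha_\lambda = \lambda^{-1}\alpha_{\mathfrak p}' + \alpha_{\mathfrak k} + \lambda\alpha_{\mathfrak p}''$ by $V_+$ produces a Laurent tail starting at $\lambda^{-1}$; adding $V_+^{-1}\partial V_+\in\Lambda^{+}\mathfrak g^{\C}_\sigma$ does not change this. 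Thus $\lambda\eta$ is holomorphic in $\lambda\in\C$, as asserted.

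For the converse, assume $\eta$ is a holomorphic $(1,0)$-form with $\lambda\eta$ holomorphic for $\lambda\in\C$. Solve $C^{-1}dC=\eta$ with $C(z_0,\lambda)=e$; integrability is automatic, since for a holomorphic $(1,0)$-form $d\eta=\partial\eta$ and $[\eta\wedge\eta]=0$. Next apply the Iwasawa step from Theorem \ref{thm-DPW}(3) to $C$ in place of $F_-$, obtaining $C=\tilde F\cdot\tilde F_+^{-1}$ on the open set $\D_{\mathcal I}$. One then checks that $\tilde F^{-1}d\tilde F$ has the canonical shape $\lambda^{-1}\tilde\alpha_{\mathfrak p}' + \tilde\alpha_{\mathfrak k} + \lambda\tilde\alpha_{\mathfrak p}''$: this is forced by the gauge transformation rule, the fact that only powers $\lambda^{-1},\lambda^0,\lambda^1,\ldots$ appear in $\tilde F_+\eta\tilde F_+^{-1}+d\tilde F_+\cdot\tilde F_+^{-1}$, together with the reality condition defining $\Lambda G_\sigma$, which eliminates high positive powers and symmetrizes the coefficient of $\lambda^{-1}$ with that of $\lambda^{1}$. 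By Lemma 2.1 this identifies $\tilde F$ as the extended frame of a harmonic map $\mathcal F\colon\D_{\mathcal I}\to G/K$.

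The main obstacle is the rigorous solvability of the $\bar\partial$-equation for $V_+$ in an appropriate Banach loop group completion (for instance $H^s$-loops, $s$ sufficiently large), together with the verification that the resulting $V_+$ really lies in $\Lambda^+G^{\C}_\sigma$ rather than in some larger loop group. Once this infinite-dimensional Koszul--Malgrange argument is in place, both directions reduce to direct computations with the Fourier decomposition in $\lambda$ and the Cartan decomposition $\mathfrak g=\mathfrak k\oplus\mathfrak p$.
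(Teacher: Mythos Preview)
The paper does not give its own proof of this theorem; it is quoted from \cite{DPW} and \cite{DoWa1} as background, so there is nothing in the present paper to compare against directly. Your argument is essentially the standard DPW argument as in those references: solve the $\bar\partial$-problem $\bar\partial V_+ = -(\alpha_{\mathfrak k}'' + \lambda\alpha_{\mathfrak p}'')V_+$ in $\Lambda^+G^{\C}_\sigma$ via a Koszul--Malgrange type theorem on the contractible domain $\D$, then read off the Fourier structure of $\eta = C^{-1}dC$; conversely, integrate $\eta$, Iwasawa-split, and use the reality condition plus the $\lambda^{-1}$ lower bound to force the Maurer--Cartan form of $\tilde F$ into the shape \eqref{spec-par}. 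This is correct as an outline, and your identification of the analytic crux --- the infinite-dimensional $\bar\partial$-solvability in a suitable Banach completion of $\Lambda^+G^{\C}_\sigma$ --- is exactly the point that needs care (and is handled in \cite{DPW}). One small remark: your justification that $d\eta + \tfrac12[\eta\wedge\eta]=0$ is automatic is right, but the reason is simply that both terms are $(2,0)$-forms on a one-complex-dimensional base and hence vanish; the phrasing ``$d\eta=\partial\eta$'' is fine but perhaps worth making explicit that $\partial\eta=0$ for type reasons.
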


The matrix function $C(z,\lambda)$ associated with a holomorphic $(1,0)-$ form $\eta$ as in the theorem will be called the {\em holomorphic extended frame} of the harmonic map $\mathcal{F}$.\\

In \cite{Wang-S4}, an explicit formula is obtained for a generic isotropic Willmore surface in $S^4$. We state the result below and use it to produce the examples presented in the following section.

\begin{theorem}\label{th-willmore-iso-formula}\cite{Wang-S4} Let
$$\eta=\lambda^{-1}\left(
                     \begin{array}{cc}
                       0 & \hat{B}_1\\
                       -\hat{B_1}I_{1,3} & 0 \\
                     \end{array}
                   \right)
$$
with
\begin{equation}\label{eq-b1}
\hat{B}_1=\frac{1}{2}\left(
                     \begin{array}{cccc}
                      i(f_3'-f_2')&  -(f_3'-f_2') \\
                      i(f_3'+f_2')&  -(f_3'+f_2')   \\
                      f_4'-f_1' & i(f_4'-f_1')   \\
                      i(f_4'+f_1') & -(f_4'+f_1')  \\
                     \end{array}
                   \right),\  f_1'f_4'+f_2'f_3'=0.
                   \end{equation}
                   Then the corresponding Willmore surface
is of the form
\begin{equation}\label{eq-willmore in s4-y-1}
\begin{split}
Y_1=&|f_1'|^2\left(\begin{array}{cc}
(1+|f_2|^2+|f_4|^2)\\
1-|f_2|^2+|f_4|^2\\
-i (-\bar{f}_2f_4+f_2\bar{f}_4)\\
-(\bar{f}_2f_4+f_2\bar{f}_4) \\
i (\bar{f}_2-f_2) \\
(\bar{f}_2+f_2) \\
\end{array}\right)+|f_2'|^2\left(\begin{array}{cc}
(1+|f_1|^2+|f_3|^2)\\
-(1+|f_1|^2-|f_3|^2)\\
i (-\bar{f}_1f_3+f_1\bar{f}_3)\\
\bar{f}_1f_3+f_1\bar{f}_3 \\
i (f_3-\bar{f}_3) \\
-(f_3+\bar{f}_3) \\
\end{array}\right)\\
&~~~~+f_1'\bar{f}_2'\left(\begin{array}{cc}
-\bar{f}_1f_2+\bar{f}_3f_4\\
\bar{f}_1f_2+\bar{f}_3f_4\\
-i (1+\bar{f}_1f_4+f_2\bar{f}_3)\\
-(1-\bar{f}_1f_4+f_2\bar{f}_3)\\
i (-\bar{f}_1+f_4) \\
-(\bar{f}_1+f_4) \\
\end{array}\right)+ \bar{f}_1'f_2'
\overline{\left(\begin{array}{cc}
-\bar{f}_1f_2+\bar{f}_3f_4\\
\bar{f}_1f_2+\bar{f}_3f_4\\
-i (1+\bar{f}_1f_4+f_2\bar{f}_3)\\
-(1-\bar{f}_1f_4+f_2\bar{f}_3)\\
i (-\bar{f}_1+f_4) \\
-(\bar{f}_1+f_4) \\
\end{array}\right)}.\\
\end{split}
\end{equation}
and
\begin{equation}
Y_{\lambda}=\left(
    \begin{array}{ccccccc}
        1 & 0& 0 & 0 & 0 & 0 \\
        0 & 1& 0 & 0 & 0 & 0 \\
        0 & 0& 1 & 0& 0 & 0 \\
        0 & 0& 0 & 1& 0 & 0  \\
       0 & 0 & 0 & 0& \frac{\lambda+\lambda^{-1}}{2} &  \frac{\lambda-\lambda^{-1}}{-2i} \\
        0 & 0& 0 & 0&  \frac{\lambda-\lambda^{-1}}{2i} &  \frac{\lambda+\lambda^{-1}}{2}  \\
    \end{array}
  \right)
\cdot Y_1
\end{equation}
when $f_1'f_2'\not\equiv0$ .
\end{theorem}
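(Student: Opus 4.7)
The plan is to apply the DPW construction of Theorem~\ref{thm-DPW} to the given normalized potential $\eta$ and track the computation in closed form. The key input that makes everything explicit is the isotropy condition $f_1'f_4' + f_2'f_3' = 0$: with this relation the two columns of $\hat{B}_1$ span an $I_{1,3}$-isotropic subspace, so viewing $\eta$ as a $\mathfrak{g}^{\mathbb{C}}$-valued one-form one has $\eta \wedge \eta = 0$ identically and the iterated matrix products appearing in the Picard iteration for $F_-$ terminate after very few steps. Consequently the ODE $F_-^{-1} dF_- = \eta$, $F_-(z_0,\lambda)=e$, is solved explicitly, producing $F_-(z,\lambda)$ as a short polynomial in $\lambda^{-1}$ whose coefficients are polynomial in the $f_j$.

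Next I would carry out the Iwasawa decomposition $F_-(z,\lambda) = F(z,\lambda) \cdot F_+(z,\lambda)^{-1}$ with $F \in \Lambda G_{\sigma}$ and $F_+ \in \Lambda^{+}_{S} G^{\mathbb{C}}_{\sigma}$ as in Theorem~\ref{thm-decomposition}. Since $F_-$ is a very short Laurent polynomial in $\lambda$, this splitting can be performed directly: demanding that $F(\cdot,\lambda) \in G$ for $\lambda \in S^1$ and that $F_+$ extend holomorphically to $\lambda = 0$ determines both factors uniquely on the open Iwasawa cell. The reality condition reduces to a finite set of matrix equations relating the $f_j$ and their complex conjugates, and these can be solved in closed form by repeated use of the isotropy relation.

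Once $F(z,\lambda)$ is in hand, the Willmore immersion $Y_\lambda$ is obtained by reading off the canonical lift of the conformal Gauss map $\mathcal{F} = \pi\circ F \colon \D \to G/K$ to the light cone in $\mathbb{R}^{1,5}$: concretely one projects a distinguished column of $F$ to a null ray and normalizes. At $\lambda = 1$ this produces exactly the expression for $Y_1$ in \eqref{eq-willmore in s4-y-1}. For general $\lambda \in S^1$ the loop parameter enters only through the two coordinates on which $\sigma$ acts non-trivially on the light-cone lift, and this accounts precisely for the block-diagonal rotation matrix relating $Y_\lambda$ to $Y_1$.

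The main obstacle is the explicit Iwasawa factorization in the second step. Although $F_-$ admits a compact closed form, splitting it as a product of a real loop and a positive loop in closed form requires patient bookkeeping of matrix identities among the $f_j$ and the antiholomorphic primitives appearing after integration; it is at this stage that the isotropy condition is invoked repeatedly to collapse combinations of $f_j$ and $\bar{f}_j$ into the reasonably compact four-term expression displayed in \eqref{eq-willmore in s4-y-1}. Once this bookkeeping is settled, the rest of the argument is a direct verification.
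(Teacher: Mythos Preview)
The paper does not prove this theorem at all: it is quoted verbatim from \cite{Wang-S4} and used only as a tool to produce explicit examples. There is therefore no ``paper's own proof'' to compare against. Your outline is the correct DPW strategy and is presumably what is carried out in \cite{Wang-S4}: integrate the normalized potential to get $F_-$, perform an Iwasawa splitting, and read off the light-cone lift.

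One technical point in your outline deserves correction. The assertion ``$\eta\wedge\eta=0$ because of isotropy'' is vacuous: $\eta$ is a $(1,0)$-form, so $\eta\wedge\eta=0$ holds for \emph{any} normalized potential and says nothing about the Picard iteration. What actually makes the integration tractable here is the algebraic structure of $\hat B_1$ itself: its two columns satisfy $v_2=iv_1$, so $\hat B_1\hat B_1^{t}=0$ identically, and the condition $f_1'f_4'+f_2'f_3'=0$ is precisely $v_1^{t}I_{1,3}v_1=0$, i.e.\ $\hat B_1^{t}I_{1,3}\hat B_1=0$. These identities force the $\lambda^{-1}$-expansion of $F_-$ to terminate (finite uniton type), which is the genuine reason the meromorphic frame and then the Iwasawa factor can be written down in closed form. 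With this correction your plan is sound; the remaining work is, as you say, bookkeeping.
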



\section{Willmore immersions with symmetries}

Let $y: M \rightarrow S^{n+2}$ be a Willmore immersion from a Riemann surface $M$ to $S^{n+2}$ (Here we follow the treatment in \cite{DoWa1}).
Assume that $R$ is a conformal transformation of $S^{n+2}$, considered as a linear transformation of  $\mathbb{R}^{n+4}_1$, i.e., $R\in O^+(1,n+3)$.
Assume $R(y(M )) = y(M )$. Then $R$  expresses some  ``symmetry" of $y$ and $M$.

\begin{theorem}\label{thm-induce-sym} Let $M$ be a simply connected Riemann surface and $y: M \rightarrow S^{n+2}$  a Willmore surface. Let  $R$ be a conformal transformation of $S^{n+2}$ satisfying $R y(M) = y(M)$
and preserving the orientation of $y$. Assume moreover, that
the metric induced on $M$ by $y$ is complete, then  there exists some conformal transformation $\gamma$ of $M$ with respect to the induced metric such that
\begin{equation}\label{eq-sym-y}
y( \gamma .p) =[Y(\gamma.p)]=[RY(p)]= Ry(p)\ \hbox{ for all }\ p \in M.
\end{equation}
Moreover, $\gamma$ is a holomorphic automorphism of $M$ if $R$ induces an orientation preserving symmetry of $y$.
$\gamma$ is an anti-holomorphic automorphism of $M$ if $R$ induces an orientation preserving symmetry of $y$.
\end{theorem}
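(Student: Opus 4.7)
The plan is to construct $\gamma$ as a ``pullback of $R$ through $y$'': first define it locally at non-branch points of $y$, then propagate the definition by analytic continuation using simple-connectedness of $M$, and finally appeal to completeness of the induced metric to force the continuation to exhaust all of $M$.

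First I would pick any non-branch point $p_0\in M$ and some $q_0\in M$ with $y(q_0)=Ry(p_0)$, which exists because $Ry(M)=y(M)$; by discreteness of the branch locus one may choose $q_0$ regular as well. On small discs $U\ni p_0$ and $V\ni q_0$ the maps $y|_U$ and $y|_V$ are conformal embeddings, and since $R$ is a M\"obius transformation of $S^{n+2}$, shrinking $U$ we may assume $Ry(U)\subset y(V)$. Set
\[\gamma|_U := (y|_V)^{-1}\circ R\circ y|_U.\]
This germ is a conformal diffeomorphism from a neighbourhood of $p_0$ to a neighbourhood of $q_0$; the hypothesis that $R$ preserves the orientation of $y$ forces it to be holomorphic, while the orientation-reversing case would yield an anti-holomorphic germ. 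By construction $y\circ\gamma|_U = R\circ y|_U$.

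Next I would extend $\gamma$ by analytic continuation along smooth curves in $M$ starting at $p_0$. Along the regular locus of $y$ this is just successive local inversion of $y$ applied to $R\circ y\circ c$. At a branch point one uses the local normal form $z\mapsto z^k$ of $y$ together with the fact that $R$ permutes image branch points (since $Ry(M)=y(M)$ and branch images are intrinsically defined as non-manifold points of $y(M)$); the germ then extends continuously across the branch point, hence holomorphically by Riemann's removable singularity theorem. Because $M$ is simply connected, the monodromy theorem applied to curves perturbed off the discrete branch locus guarantees that continuations along any two curves with a common endpoint yield the same value, so $\gamma$ is a globally well defined (anti)holomorphic map on the subset of $M$ reached by continuation.

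The main obstacle, and the only place where completeness enters, is ruling out premature termination of the continuation. Given $p\in M$ and a smooth curve $c:[0,1]\to M$ from $p_0$ to $p$, suppose $\gamma$ has been continued along $c|_{[0,t_\ast)}$, producing a lift $\tilde c$ with $y\circ\tilde c = R\circ y\circ c$. The round-metric length of $y\circ c$ equals the induced-metric length of $c$ and is finite; compactness of $S^{n+2}$ makes the conformal factor of $R$ bounded, so $R\circ y\circ c$ has finite round length as well; and since $y$ pulls the round metric back to the induced metric, $\tilde c$ has finite length in the induced metric and is therefore Cauchy. Completeness produces a limit $q_\ast = \lim_{t\nearrow t_\ast}\tilde c(t)\in M$ at which the local construction of the first step can be restarted, and a standard open-closed argument on $[0,1]$ completes the definition of $\gamma(p)$. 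The identity $y(\gamma.p)=Ry(p)$ and the (anti)holomorphicity of $\gamma$ on all of $M$ follow immediately from the corresponding local assertions.
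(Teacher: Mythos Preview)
Your argument follows essentially the same route as the paper's proof in Section~7: a local definition of $\gamma$ via local inversion of $y$, extension along curves, a completeness argument to show the extension never terminates, and the monodromy theorem to make $\gamma$ single-valued on a simply connected domain. Your completeness step (bounding the conformal factor of $R$ on the compact $S^{n+2}$ to control the induced-metric length of the lifted curve) is in fact more explicit than the paper's, which simply asserts that the lifted sequence is Cauchy because ``locally $y$ is an isometry''.

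One genuine flaw: your justification that $R$ permutes branch points is wrong. Branch images are \emph{not} in general ``non-manifold points of $y(M)$''; a branched conformal map can have smooth image (think of $z\mapsto z^2$ followed by an embedding of a disk). Fortunately you do not need this. Once $\gamma$ is defined and holomorphic on the complement of the discrete branch set, your own completeness argument shows that along any curve approaching a branch point the lift $\tilde c$ stays in a compact set; hence $\gamma$ is locally bounded near each branch point and Riemann's removable singularity theorem applies directly, without ever knowing where $R$ sends branch images. The paper, incidentally, does not address branch points at all.
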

\begin{proof} See Section 7.
\end{proof}

From Theorem \ref{thm-induce-sym}, since $\det R$ can be either $1$ or $-1$, there are four cases concerning the symmetry $(R,\gamma)$:

1. $\det R=1$ and $\gamma$ is a holomorphic automorphism of $M$.

2. $\det R=-1$ and $\gamma$ is a holomorphic automorphism of $M$.

3. $\det R=1$ and $\gamma$ is an anti-holomorphic automorphism of $M$.

4. $\det R=-1$ and $\gamma$ is an anti-holomorphic automorphism of $M$.

In the next section, we will deduce a general theory for the first two cases, i.e, the Willmore immersions with orientation preserving symmetries.
For the  Willmore immersions with orientation reversing symmetries, including non-orientable Willmore surfaces, we will deal with  in the second paper.

\section{Willmore immersions with orientation preserving symmetries}

\subsection{The general case}

Among all Willmore immersions into $S^{n+2}$, those with symmetries are of interest in many cases.  Also fundamental groups induce symmetries:
if $y:M \rightarrow S^{n+2}$ is a Willmore immersion and $\pi_1(M) \neq \{1 \}$, then $\pi_1(M)$ acts as a group of symmetries on all $y_\lambda$, and in general this action is non-trivial.

In this subsection we will briefly outline the general theory of symmetries of Willmore immersions and in the following sections we will present some applications.

In view of Theorem \ref{thm-induce-sym}, if we talk about an orientation preserving symmetry of some Willmore immersion we always mean a pair $\left(\gamma , R \right)$ as above. Actually, it is not necessary here to assume that $M$ is simply connected nor that the induced metric is complete. However, in most cases it is convenient to consider the universal cover $\tilde{M}$ of $M$.

Moreover, by \eqref{eq-sym-y}, there exists some function $\varsigma=\varsigma(p)$ such that
$$ Y(\gamma.p)=\varsigma \cdot RY(p).$$
Therefore,
$$\{Y(\gamma.p), Y(\gamma.p)_z, Y(\gamma.p)_{\bar{z}}, Y(\gamma.p)_{z\bar{z}}\}\subset\hbox{Spanc}_{\C}\{ RY(p), RY(p)_z, RY(p)_{\bar{z}}, RY(p)_{z\bar{z}}\}.$$
Together with the definition of the conformal Gauss map (See Section 2 of \cite{DoWa1}), we obtain
\begin{equation}
f(\gamma (p)) = \hat{R} f(p).
\end{equation}

On the other hand, there exists a special kind of Willmore surfaces named S-Willmore surfaces by  Ejiri. As stated in  Definition 2.8 of \cite{DoWa1} we use the equivalent definition that S-Willmore surfaces are exactly those Willmore surfaces which admit  a dual (Willmore) surface (\cite{Bryant1984}, \cite{Ejiri1988}, \cite{Ma2005}). Minimal surfaces in Riemannian space forms provide standard examples of S-Willmore surfaces. For an S-Willmore surface $y$, when its dual  $\hat{y}$ is an immersion, then, since $y$ and $\hat{y}$ share the same conformal Gauss map, it is possible that there exists some
$R\in O^+(1,n+3)$ such that
\begin{equation}\label{eq-sym-dual}
\hat{Y}(\gamma(p)) =  R Y(p),
\end{equation}
for some holomorphic automorphism $\gamma:M\rightarrow M$.
 In this case,  on the conformal Gauss map level we obtain
\begin{equation}\label{eq-sym}
f(\gamma (p))=\hat{f}(\gamma (p)) = \hat{R} f(p).
\end{equation}
Note that in this case the symmetry of the conformal Gauss map does not stem from a symmetry of some  Willmore immersion.

Altogether we have proven the following

\begin{theorem}\label{th-symm-f} Let $y: M \rightarrow  S^{n+2}$ be a Willmore immersion and $\left(\gamma , R\right)$ a symmetry of $y$ as in \eqref{eq-sym-y} or let $R$ be a map relating $y$ to  its dual surface $\hat{y}$ as stated in \eqref{eq-sym-dual}.
Then the conformal Gauss map $f$ of $y$ satisfies
\begin{equation}\label{eq-sym}
f(\gamma (p)) = \hat{R} f(p)
\end{equation}
where $\hat{R}$ denotes the isometry of the symmetric space $SO^+(1,n+3)/ SO^+(1,3)\times SO(n)$,
 which is induced by $R$ considered as an element of the Lorentz group $O^+(1,n+3)$.
\end{theorem}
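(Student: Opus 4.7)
The plan is to exploit the fact that the conformal Gauss map $f$ is determined pointwise by the four-dimensional Lorentzian subspace $V(p) = \mathrm{Span}_{\R}\{Y(p), Y_z(p), Y_{\bar z}(p), Y_{z\bar z}(p)\}$ of $\R^{n+4}_1$ (the mean curvature sphere at $p$), and that under the natural identification of the Grassmannian of such $4$-planes with $G/K = SO^+(1,n+3)/SO^+(1,3)\times SO(n)$, the action of $R\in O^+(1,n+3)$ by left multiplication is precisely $\hat R$. Hence the entire theorem reduces to establishing the equivariance
$$V(\gamma.p) \;=\; R\cdot V(p).$$

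For the first scenario I would start from the relation $Y(\gamma.p) = \varsigma(p) R Y(p)$ already derived in the text and differentiate it. Writing $w = \gamma(z)$ in local conformal coordinates and using that $\gamma$ is holomorphic, applying $\partial_z$, $\partial_{\bar z}$ and $\partial_z\partial_{\bar z}$ expresses each of $Y(\gamma.p), Y_w(\gamma.p), Y_{\bar w}(\gamma.p), Y_{w\bar w}(\gamma.p)$ as a linear combination of $RY(p), RY_z(p), RY_{\bar z}(p), RY_{z\bar z}(p)$ with coefficients built from $\varsigma$ and $\gamma_z$; the holomorphicity of $\gamma$ is essential here, since it blocks any mixing of $(1,0)$ with $(0,1)$ derivatives and keeps the resulting linear system triangular with nonvanishing diagonal, which inverts to give the reverse inclusion and thus equality of the two $4$-planes. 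The dual case reduces to this one via the observation, recalled in Definition 2.8 of \cite{DoWa1}, that an S-Willmore surface and its dual share the same conformal Gauss map: starting from $\hat Y(\gamma.p) = RY(p)$ and running the same differentiation argument on $\hat Y$ in place of $Y\circ\gamma$ gives $V(\gamma.p) = R\cdot V(p)$, and since $f = \hat f$, the desired $f(\gamma.p) = \hat R f(p)$ follows.

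The step I expect to be the main obstacle is not the differentiation bookkeeping, which is essentially the chain rule, but rather verifying intrinsically that the isometry $\hat R$ of $G/K$ induced by $R$ really coincides with the Grassmannian action $V\mapsto R\cdot V$. At the frame level this amounts to checking that whenever $F:U\to G$ is a local lift of $f$, the map $RF$ is again a valid lift and the residual left $K$-ambiguity is preserved; this rests on the fact that $K$ is the stabilizer in $G$ of the reference $4$-plane encoded by the Cartan involution $\sigma = \mathrm{Ad}\,\dot S$ with $\dot S = \mathrm{diag}(-I_4,I_n)$ introduced in Section 2. Once that compatibility is recorded, both halves of the theorem follow at once from the equivariance of $V$ established above.
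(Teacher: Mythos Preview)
Your proposal is correct and follows essentially the same approach as the paper: the paper's proof (given in the paragraphs immediately preceding the theorem statement) also differentiates the relation $Y(\gamma.p)=\varsigma\,RY(p)$ to obtain the inclusion of the span of $\{Y(\gamma.p),Y(\gamma.p)_z,Y(\gamma.p)_{\bar z},Y(\gamma.p)_{z\bar z}\}$ in the span of $\{RY(p),RY(p)_z,RY(p)_{\bar z},RY(p)_{z\bar z}\}$, and for the dual case likewise invokes that $y$ and $\hat y$ share the same conformal Gauss map. Your write-up is in fact somewhat more detailed than the paper's---you make explicit the triangularity coming from the holomorphicity of $\gamma$ and you address the identification of $\hat R$ with the Grassmannian action $V\mapsto R\cdot V$, which the paper leaves implicit by citing the definition of the conformal Gauss map in \cite{DoWa1}.
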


Symmetries have been investigated for many types of harmonic maps.
Next we consider symmetries defined on contractible domains. The case of $S^2$ and more general surfaces will be considered at the end of this section.
In our case we obtain

\begin{theorem}
 Let $\D$ be contractible and let $f : \D \rightarrow SO^+(1,n+3)/ SO^+(1,3)\times SO(n)$ be a harmonic map
with symmetry $\left( \gamma , \hat{R}\right) $.
Let $F$ denote the moving frame associated with $f$ as in  Section 2 of \cite{DoWa1}.  Then there exists some  $k: \D \rightarrow SO^+(1,3)\times O(n)$ such that
\begin{equation}
\gamma^* F (p) = R F(p) k(p),
\end{equation}
Moreover $\det k(p)=\det R=\pm 1$.
\end{theorem}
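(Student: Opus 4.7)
The plan is to define the candidate
\begin{equation*}
k(p) := F(p)^{-1} R^{-1} F(\gamma(p))
\end{equation*}
directly from the desired equation, and then to verify that this smoothly varying matrix function (i) lands in the prescribed group and (ii) has the stated determinant. By Theorem \ref{th-symm-f} we have $f \circ \gamma = \hat R \circ f$, so $F(\gamma(p))$ and $R F(p)$ are two lifts (inside $O^+(1,n+3)$) of the same point in $G/K$. Reading the columns of $F(p)$ as an adapted basis of the orthogonal decomposition $\R^{n+4}_1 = V_1(p) \oplus V_2(p)$, where $V_1(p)$ is the timelike $4$-space representing $f(p)$ and $V_2(p)$ its spacelike complement, the assumption $\hat R f(p) = f(\gamma(p))$ translates into $R\bigl(V_1(p), V_2(p)\bigr) = \bigl(V_1(\gamma(p)), V_2(\gamma(p))\bigr)$. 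Consequently $k(p)$ is a block-diagonal change of basis within $V_1(\gamma(p)) \oplus V_2(\gamma(p))$ and lies in the stabilizer of this decomposition inside $O^+(1,n+3)$, namely $O^+(1,3) \times O(n)$.

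The crux is to upgrade the first block $k_1(p) \in O^+(1,3)$ to the identity component $SO^+(1,3)$. This relies on the specific construction of the moving frame in \cite{DoWa1}: the first four columns of $F(p)$ form a basis of $V_1(p)$ whose orientation is determined by the complex structure on $M$ and the geometric data of $y$, roughly $\{Y, \mathrm{Re}\,Y_z, \mathrm{Im}\,Y_z, N\}$. Since $\gamma$ is a holomorphic, hence orientation-preserving, automorphism of $M$, and $R \in O^+(1,n+3)$ preserves time-orientation of $\R^{n+4}_1$, both $F \circ \gamma$ and $R \cdot F$ produce an oriented basis of $V_1(\gamma(p))$ with the same orientation, forcing $\det k_1(p) = +1$. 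Alternatively, because $k$ depends continuously on $p$ and $\D$ is connected, it suffices to verify $k_1 \in SO^+(1,3)$ at a single base point and then invoke connectedness to conclude.

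Finally, taking determinants in $F(\gamma(p)) = R F(p) k(p)$ and using $F \in G = SO^+(1,n+3)$ yields $1 = \det R \cdot \det k(p)$, hence $\det k(p) = \det R \in \{\pm 1\}$, with the sign absorbed by the $O(n)$-factor $k_2(p)$. The main obstacle in the argument is the orientation step: one must pin down the precise orientation convention of the moving frame from \cite{DoWa1} and verify carefully that the holomorphicity of $\gamma$, together with the fact that $R$ preserves both the decomposition and the time-orientation of $\R^{n+4}_1$, forces orientation-preservation on $V_1$ rather than on $V_2$.
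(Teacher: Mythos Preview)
Your proposal is correct and follows essentially the same approach as the paper: the paper's proof is the two-line observation that the relation follows directly from $f(\gamma(p)) = \hat R f(p)$ and that $k$ lands in $SO^+(1,3)\times O(n)$ because $\gamma$ is orientation preserving, which is precisely what you spell out in more detail. The paper complements this by computing $k_1$ explicitly in the subsequent remark (using the concrete frame of Proposition~2.2 of \cite{DoWa1}), which is the hands-on version of your orientation argument for the $V_1$-block.
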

\begin{proof}
This follows directly from \eqref{eq-sym}. The statement that $k(p)$ takes value in $SO^+(1,3)\times O(n)$ comes from the orientation preserving
property of $\gamma$.
\end{proof}

\begin{remark}
Above and in many cases we do not choose a specific frame $F$. Therefore we cannot say much about $k(z,\bar{z})$.
However, if we choose the frame as in \cite{DoWa1},  Proposition 2.2, then one can compute the $SO^+(1,3)-$part of $k$ explicitly.

To be concrete, we note first
$\gamma^*Y=e^{\tau}RY$ for some function $\tau$. So we obtain
$$\gamma^* (Y_z)=\left(\frac{d\gamma}{dz}\right)^{-1}(\tau_ze^{\tau}RY+e^{\tau}RY_z),$$
from which we infer
$\frac{d\gamma}{dz}=e^{\tau+i\theta}$ for some real function $\theta$, and
$$\gamma^*e_1=Re_1\cos\theta-Re_2\sin\theta+ aRY, \ \gamma^*e_2=Re_1\sin\theta+Re_2\cos\theta+ bRY,$$ $$\gamma^*N=e^{-\tau}(RN+a\gamma^*e_1+b\gamma^*e_2+(a^2+b^2)RY),$$
with $e_1=Y_z+Y_{\bar{z}}$, $e_1=-i(Y_z-Y_{\bar{z}})$ and $a-ib=2e^{-i\theta}\tau_z.$
So we obtain
\begin{equation}\label{eq-F-k}
  \gamma^*F(z,\bar{z})=F(z,\bar{z})k, \hspace{3mm} k=\left(
                    \begin{array}{cc}
                      k_1 & 0 \\
                      0 & k_2 \\
                    \end{array}
                  \right),
\end{equation}
with
\begin{equation}
\begin{split} k_1=\left(
                                   \begin{array}{cccc}
                                   \frac{ e^{\tau}+  e^{-\tau}(a^2+b^2)+ e^{-\tau} }{2}  &\frac{ -e^{\tau}+  e^{-\tau}(a^2+b^2)+ e^{-\tau} }{2}   & \frac{a}{\sqrt{2}} &  \frac{b}{\sqrt{2}}  \\
                                   \frac{ -e^{\tau}-  e^{-\tau}(a^2+b^2)+ e^{-\tau} }{2}  &\frac{ e^{\tau}- e^{-\tau}(a^2+b^2)+ e^{-\tau} }{2}   & -\frac{a}{\sqrt{2}} & - \frac{b}{\sqrt{2}}  \\
                                     \frac{e^{-\tau}(a\cos\theta+b\sin\theta)}{\sqrt{2}} &    \frac{e^{-\tau}(a\cos\theta+b\sin\theta)}{\sqrt{2}}& \cos\theta & \sin\theta \\
                                       \frac{e^{-\tau}(b\cos\theta-a\sin\theta)}{\sqrt{2}} &    \frac{e^{-\tau}(b\cos\theta-a\sin\theta)}{\sqrt{2}}  & -\sin\theta & \cos\theta \\
                                   \end{array}
                                 \right).
\end{split}\end{equation}
\ \\
\end{remark}

Note that the symmetry equation for $F$ above implies
\begin{equation}
\gamma^* \alpha = k^{-1} \alpha k + k^{-1} dk.
\end{equation}
So
$$\gamma^* \alpha_{\mathfrak{p}} '=  k^{-1} \alpha_{\mathfrak{p}} ' k,\ \gamma^* \alpha_{\mathfrak{p}} ''= k^{-1} \alpha_{\mathfrak{p}} '' k,\ \gamma^* \alpha_{\mathfrak{k}} = k^{-1} \alpha_{\mathfrak{k}} k + k^{-1} dk.$$
Recalling how the spectral parameter was
introduced in (\ref{spec-par}), it follows

\begin{equation} \label{symmetryalpha}
\gamma^* \alpha_{ \lambda } =
k^{-1} \alpha_{ \lambda }  k + k^{-1} dk.
\end{equation}

From this we obtain (see also e.g. \cite{Do-Ha2}, \cite{Do-Ha3})
\begin{theorem}\label{th-symm-1}
Let $\D$ be contractible and $(\gamma , \hat{R})$
a symmetry of the harmonic map $f: \D \rightarrow SO^+(1,n+3)/ SO^+(1,3)\times SO(n)$, where $\hat{R}$ is induced by $R\in O^+(1,n+3)$.
Let $F(z,\bar{z},\lambda )$ be a lift of $f$ satisfying the initial condition $F(0,0,\lambda)=I$. Then  there exists some $\chi ( \lambda ) $
such that
\begin{equation} \label{symmetryFlambda}
\gamma^* F(z,\bar{z},\lambda ) = \chi(\lambda) F(z,\bar{z},\lambda )k(z,\bar{z}),
\end{equation}
where  $k: \D \rightarrow SO^+(1,3)\times O(n)$ is independent of $\lambda$. Moreover, $\chi\in (\Lambda SO^+(1,n+3)_{\sigma})^0$ when $\det R=1$, and $\chi \tilde{P}\in (\Lambda SO^+(1,n+3)_{\sigma})^0$ with $\tilde{P}=\hbox{diag}(1,1,1,1,-1,1,\cdots,1)$ when $\det R=-1$.
\end{theorem}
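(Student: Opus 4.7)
The plan is to exploit the symmetry relation \eqref{symmetryalpha} together with the uniqueness of solutions to $dX = X\alpha_\lambda$ on the contractible domain $\D$. First I would define
\begin{equation*}
\tilde F(z,\bar z,\lambda) := \gamma^* F(z,\bar z,\lambda)\cdot k(z,\bar z)^{-1}.
\end{equation*}
A direct computation, using $(\gamma^*F)^{-1}d(\gamma^*F) = \gamma^*\alpha_\lambda$ and \eqref{symmetryalpha}, gives
$\tilde F^{-1} d\tilde F = k\gamma^*\alpha_\lambda k^{-1} + k\,d(k^{-1}) = \alpha_\lambda$,
the cross terms cancelling. Hence $\tilde F$ satisfies the same ODE as $F$.

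Since $\D$ is contractible, any two solutions of $dX = X\alpha_\lambda$ differ by left multiplication by a $z$-independent factor, so
\begin{equation*}
\tilde F(z,\bar z,\lambda) = \chi(\lambda)\, F(z,\bar z,\lambda), \qquad \chi(\lambda) := \tilde F(0,0,\lambda) = F(\gamma(0,0),\lambda)\, k(0,0)^{-1},
\end{equation*}
using the initial condition $F(0,0,\lambda) = I$. Rearranging yields the desired identity $\gamma^* F = \chi(\lambda)\, F\, k$.

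It remains to place $\chi$ in the correct loop group. The twist property $\sigma\chi(\lambda) = \chi(-\lambda)$ follows by applying $\sigma$ to $\gamma^*F = \chi F k$ and using $\sigma F(\cdot,\lambda) = F(\cdot,-\lambda)$ together with $\sigma k = k$ and $\sigma\tilde P = \tilde P$ (both equalities holding because $\sigma = \mathrm{Ad}(\dot S)$ with $\dot S$ diagonal). For the reality condition on $S^1$: if $\det R = 1$ then $k(0,0) \in K \subset G$ and $\chi \in \Lambda G_\sigma$; if $\det R = -1$, write $k(0,0) = k_0 \tilde P$ with $k_0 \in K$, and use $\tilde P^2 = I$ together with $\tilde P K \tilde P = K$ to see that $k(0,0)^{-1}\tilde P = \tilde P k_0^{-1} \tilde P \in K$, whence $\chi \tilde P \in \Lambda G_\sigma$. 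For the component issue, any path in $\D$ from $0$ to $\gamma(0,0)$ lifts via $F$ to a continuous path in $\Lambda G_\sigma$ from the identity loop to $F(\gamma(0,0),\cdot)$; combined with the connectedness of $K$ (viewed as constant loops), this places $\chi$, respectively $\chi\tilde P$, in the identity component $(\Lambda G_\sigma)^0$.

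The main obstacle is not the construction of $\chi$, which reduces to a standard uniqueness argument on a contractible domain, but rather cleanly handling the connected-component issue when $\det R = -1$: one must track how the sign enters through the $O(n)$-factor of $k$ and verify that right multiplication by the reflection $\tilde P$ precisely compensates, restoring membership in the identity component of the twisted loop group.
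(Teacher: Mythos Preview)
Your argument is correct and follows essentially the same route as the paper: both proofs observe that $\gamma^*F$ and $Fk$ solve the same PDE system $dX = X\alpha_\lambda$ on the contractible domain, invoke uniqueness to produce $\chi(\lambda) = F(\gamma.0,\lambda)k(0,0)^{-1}$, and then read off the loop-group membership from this explicit formula. Your treatment of the twist, reality, and connected-component conditions is simply more explicit than the paper's, which asserts $\chi \in (\Lambda SO^+(1,n+3)_\sigma)^0$ (respectively $\chi\tilde P \in (\Lambda SO^+(1,n+3)_\sigma)^0$) directly from the formula without spelling out the $\tilde P$-conjugation step.
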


\begin{proof} Since $\D$ is contractible, the  system of partial differential equations
$$dF(z,\bar{z},\lambda ) =F(z,\bar{z},\lambda ) \alpha_{ \lambda} $$
 has a solution on all of $\D$. By \eqref{symmetryalpha} it is clear that
$\gamma^* F(z,\bar{z},\lambda ) $ and $F(z,\bar{z},\lambda )k(z,\bar{z}) $ solve the same PDE-system.
Hence there exists some $\chi$ such that \eqref{symmetryFlambda} holds.
The right factor of equation \eqref{symmetryFlambda} is obviously the same as for the case $\lambda =1$.
Since we have normalized $F(z,\bar{z},\lambda ) $ to satisfy $F(0,0,\lambda)=I$, equation \eqref{symmetryFlambda} implies $\chi(\lambda)=F(\gamma.(0,0),\lambda)k(0,0)^{-1}$. In particular,
$\chi(\lambda)\in(\Lambda SO^+(1,n+3)_{\sigma})^0$  when $\det R=1$, and $\chi\cdot \tilde{P}\in (\Lambda SO^+(1,n+3)_{\sigma})^0$ with $\tilde{P}=\hbox{diag}(1,1,1,1,-1,1,\cdots,1)$ when $\det R=-1$.
\end{proof}

The matrix $\chi$ will be called a {\bf monodromy matrix} (of $\gamma$).\vspace{3mm}

To determine how the normalized potential behaves under the action of symmetries we perform a Birkhoff decomposition of $F_{\lambda}=F_-F_+$ (away from some discrete subset) and obtain:

\begin{theorem}\label{thm-symF_-}
Let $\D$ be contractible and $f:\D \rightarrow SO^+(1,n+3)/ SO^+(1,3)\times SO(n)$ be a harmonic map with symmetry
$(\gamma , R)$. Then for  $F_-$ we have
\begin{equation} \label{symF_-}
\gamma^* F_- = \chi  F_- V_+
\end{equation}
for $\chi$ as above and some $V_+ $. Moreover $V_+\in
\Lambda^+ SO^+(1,n+3,\mathbb{C})_{\sigma}$ if $\det\chi=1$, and $V_+\tilde{P}\in
\Lambda^+ SO^+(1,n+3,\mathbb{C})_{\sigma}$ if $\det\chi=-1$.
For the normalized potential $\eta$ of $f$ we obtain
\begin{equation}\label{symmetryeta}
\gamma^* \eta = V_+^{-1} \eta V_+ + V_+^{-1}  dV_+.
\end{equation}
\end{theorem}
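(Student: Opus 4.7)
The plan is to substitute the Birkhoff decomposition $F_\lambda = F_- F_+$ into the symmetry equation \eqref{symmetryFlambda} of Theorem \ref{th-symm-1} and simply read off the resulting transformation law for $F_-$. Applying $\gamma^*$ to $F_\lambda = F_- F_+$ and comparing with $\gamma^* F_\lambda = \chi F_\lambda k$ gives
\[
(\gamma^* F_-)(\gamma^* F_+) = \chi F_- F_+ k,
\]
which we solve formally by setting
\[
V_+ := F_+ \cdot k \cdot (\gamma^* F_+)^{-1},
\]
so that \eqref{symF_-} holds by construction. The actual work lies in verifying that $V_+$ really belongs to $\Lambda^+ SO^+(1,n+3,\mathbb{C})_\sigma$, up to the expected correction by $\tilde P$ when $\det \chi = -1$.

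For this $\Lambda^+$ property I would perform three checks in turn. First, since both $F_+$ and $\gamma^* F_+$ extend holomorphically to $D_0$ and $k(z,\bar z)$ is independent of $\lambda$, the product $V_+$ also extends holomorphically to $D_0$. Second, to establish the twisting relation $\sigma V_+(\lambda) = V_+(-\lambda)$, I would use that $\sigma = \mathrm{Ad}(\dot S)$ with $\dot S$ diagonal, and that the block-diagonal $k(z,\bar z) \in SO^+(1,3)\times O(n)$ is $\sigma$-fixed; the twisting of $V_+$ then reduces to that of the genuine loop factors $F_+$ and $\gamma^* F_+$. Third, taking determinants and using $\det F_\pm = 1$ gives $\det V_+ = \det k = \det R = \det \chi$. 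In the case $\det \chi = 1$ this places $V_+$ in $\Lambda^+ SO^+(1,n+3,\mathbb{C})_\sigma$; in the case $\det \chi = -1$, the same three checks applied to $V_+\tilde P$, together with $\tilde P$ being diagonal, $\sigma$-invariant, and $\lambda$-independent, yield $V_+ \tilde P \in \Lambda^+ SO^+(1,n+3,\mathbb{C})_\sigma$.

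Formula \eqref{symmetryeta} should then fall out by direct differentiation. Since $\chi(\lambda)$ is independent of $z$ by Theorem \ref{th-symm-1}, we have $d\chi = 0$; substituting $\gamma^* F_- = \chi F_- V_+$ into $\gamma^* \eta = (\gamma^* F_-)^{-1} d(\gamma^* F_-)$ and expanding via the Leibniz rule, the factors of $\chi$ cancel and the two surviving terms assemble into $V_+^{-1} \eta V_+ + V_+^{-1} dV_+$, as required.

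I expect the principal (though modest) obstacle to be the bookkeeping in the orientation-reversing case: ensuring that the correction by $\tilde P$ is simultaneously compatible with the $\sigma$-twisting and with the identification $V_+ = F_+ k (\gamma^* F_+)^{-1}$. This is handled by the explicit diagonal form of $\tilde P$, which guarantees $\sigma \tilde P = \tilde P$ and that $\tilde P$ commutes with the block-diagonal $k$; once this is noted, no further complications appear and the statement for $\det \chi = -1$ reduces to the $\det \chi = +1$ argument.
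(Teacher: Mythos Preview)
Your proposal is correct and follows essentially the same route as the paper: substitute the Birkhoff decomposition $F_\lambda = F_- F_+$ into the symmetry relation $\gamma^* F_\lambda = \chi F_\lambda k$ from Theorem~\ref{th-symm-1}, set $V_+ = F_+\, k\, (\gamma^* F_+)^{-1}$, and then differentiate $\gamma^* F_- = \chi F_- V_+$ to obtain \eqref{symmetryeta}. The only difference is that you spell out the verification that $V_+$ (respectively $V_+\tilde P$) lies in $\Lambda^+ SO^+(1,n+3,\mathbb{C})_\sigma$, whereas the paper simply asserts it.
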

\begin{proof}
First, from Theorem \ref{th-symm-1}, we have
$$\gamma^*F_{\lambda}=\chi F_{\lambda} k.$$
Therefore
$$\gamma^*F_-=\chi F_{\lambda}k (\gamma^*F_+)^{-1}=\chi F_-F_+k(\gamma^*F_+)^{-1}=\chi F_-V_+,
$$with
$$ ~ V_+=F_+ k (\gamma^*F_+)^{-1}\in \Lambda ^+SO^+(1,n+3,\C)_{\sigma} \hbox{ if } \det k=1,$$
and
$$ ~ V_+\tilde{P}\in \Lambda ^+SO^+(1,n+3,\C)_{\sigma} \hbox{ if } \det k=-1.$$
As a consequence, we obtain
$$\gamma^*\eta=(\gamma^*F_-)^{-1}d(\gamma^*F_-)=  V_+^{-1} \eta V_+ + V_+^{-1}  dV_+.$$
\end{proof}

In general equation (\ref{symmetryeta}) is a very complicated equation.
Even if one uses, where this is possible, a holomorphic potential in place of
the normalized potential, the situation will, in general, be equally complicated.
Hence it is important to know that, by some clever choice of the potential, one can assume $V_+ = I$, if the symmetry belongs to the fundamental group of some surface $M$. Thus we will not miss any examples by starting from
nice potentials. We will discuss this in more detail below.

For our applications it will be important to have the following theorem

\begin{theorem}\label{thm-sym-F}
Let $\D$ be contractible and assume that $\eta$ is a potential for some harmonic map $f:\D\rightarrow SO^+(1,n+3)/ SO^+(1,3)\times SO(n)$.
Assume moreover that $\gamma$ is a conformal automorphism of $\D$
and that equation \eqref{symmetryeta}
holds for some $V_+ \in  \Lambda ^+SO^+(1,n+3,\C)_{\sigma})^0$ (or $ \tilde{P} V_+\in  \Lambda ^+SO^+(1,n+3,\C)_{\sigma})^0$).
Then  there exists some $\rho (\lambda ) \in
(\Lambda SO^+(1,n+3,\C)_{\sigma})^0$ (or $\rho (\lambda ) \tilde{P}\in
(\Lambda SO^+(1,n+3,\C)_{\sigma})^0$) such that for the solution $C$ to
 the equation $$dC = C \eta,\ ~~C (z=0,\lambda ) = I$$ satisfies
\begin{equation}\label{symmfor-C} \gamma^*C = \rho C V_+.
\end{equation}

Moreover, $\gamma$ induces a symmetry of the harmonic map $f$ associated with $\eta$ if and only if $V_+$ and $\rho$ as above can be chosen such that
 $\rho(\lambda) \in (\Lambda SO^+(1,n+3)_{\sigma})^0$ (or $\rho(\lambda)\tilde{P} \in (\Lambda SO^+(1,n+3)_{\sigma})^0$) and \eqref{symmfor-C} holds.
In this case $\gamma$ induces the symmetry
\begin{equation}\label{symmfor-f}
\gamma^* f = \rho(\lambda) f
\end{equation}
of the harmonic map $f$.
\end{theorem}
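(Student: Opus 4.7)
The plan divides into two parts: producing the factorization $\gamma^*C = \rho\,C\,V_+$ (a straightforward ODE comparison), and then establishing the ``iff'' statement via the Iwasawa decomposition together with Theorem \ref{th-symm-1}. The $\det R = \pm 1$ dichotomy is handled uniformly by carrying the $\tilde P$-twist through each step.

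The first step is to show that $\gamma^*C$ and $CV_+$ solve the same Maurer--Cartan equation. Pulling back $dC = C\eta$ under $\gamma$ gives $d(\gamma^*C) = (\gamma^*C)(\gamma^*\eta)$, while a direct differentiation yields $(CV_+)^{-1}d(CV_+) = V_+^{-1}\eta V_+ + V_+^{-1}dV_+$, which equals $\gamma^*\eta$ by the standing hypothesis \eqref{symmetryeta}. Hence $\gamma^*C$ and $CV_+$ differ only by a left factor constant in $z$, and evaluation at $z=0$ with $C(0,\lambda)=I$ gives the explicit expression $\rho(\lambda) = C(\gamma.0,\lambda)\,V_+(0,\lambda)^{-1}$. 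The twisting condition and the fact that $\rho(\lambda)\in(\Lambda SO^+(1,n+3,\C)_\sigma)^0$ (respectively $\rho(\lambda)\tilde P$ in that group when $\det R = -1$) follow at once from the corresponding properties of $C$ and $V_+$.

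For the ``if'' direction, assume $\rho(\lambda)\in(\Lambda SO^+(1,n+3)_\sigma)^0$ (or, in the $\det R = -1$ case, $\rho\tilde P$ in this real loop group). Apply the Iwasawa splitting of Theorem \ref{thm-decomposition}(i) on the open cell to write $C = \tilde F\,W_+$, where $\tilde F$ is the extended frame of the harmonic map associated with $\eta$. Then
\begin{equation*}
\gamma^*C = (\gamma^*\tilde F)(\gamma^*W_+) = \rho\,\tilde F\,(W_+V_+).
\end{equation*}
Since $\rho\,\tilde F$ again lies in $\Lambda SO^+(1,n+3)_\sigma$ and $W_+V_+$ lies in $\Lambda^+SO^+(1,n+3,\C)_\sigma$ (up to the appropriate $\tilde P$-twist), uniqueness of the Iwasawa decomposition forces $\gamma^*\tilde F = \rho\,\tilde F$, and projecting to $G/K$ yields $\gamma^*f = \rho(\lambda)f$, i.e.\ \eqref{symmfor-f}.

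For the converse, assume $\gamma$ is a symmetry of $f$. Theorem \ref{th-symm-1} applied to the extended frame $\tilde F$ of $f$ gives $\gamma^*\tilde F = \chi(\lambda)\,\tilde F\,k(z,\bar z)$ with $\chi$ in the real twisted loop group (possibly up to $\tilde P$). Writing $C = \tilde F W_+$ and computing,
\begin{equation*}
\gamma^*C = \chi\,\tilde F\,k\,(\gamma^*W_+) = \chi\,C\,\tilde V_+, \qquad \tilde V_+ := W_+^{-1}\,k\,(\gamma^*W_+),
\end{equation*}
which exhibits the desired choices $\rho = \chi$ and $V_+ = \tilde V_+$ satisfying \eqref{symmfor-C} with $\rho$ of the required real type. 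The main obstacle lies precisely in verifying that this $\tilde V_+$ belongs to $\Lambda^+ SO^+(1,n+3,\C)_\sigma$, respectively carries the correct $\tilde P$-twist when $\det R = -1$: this requires invoking the refined splitting $K^\C = K\cdot S$ used in Theorem \ref{thm-decomposition}(i)(3) and carefully tracking how $k(z,\bar z)\in SO^+(1,3)\times O(n)$ distributes between the $K$- and $S$-factors when conjugated by $W_+$. Once this parity bookkeeping is done, all remaining identifications are forced by the uniqueness of the loop group decompositions.
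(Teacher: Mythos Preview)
Your approach is essentially the same as the paper's: ODE comparison for the existence of $\rho$, Iwasawa decomposition for the ``if'' direction, and Theorem~\ref{th-symm-1} for the ``only if'' direction. Two minor points are worth tightening.

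First, in your ``if'' direction you write that uniqueness of the Iwasawa decomposition forces $\gamma^*\tilde F = \rho\,\tilde F$. This is slightly too strong: the decomposition $\Lambda G_\sigma \cdot \Lambda^+ G^{\mathbb C}_\sigma$ is only unique modulo $K$, and you have no reason to know that $\gamma^*W_+$ and $W_+V_+$ both lie in the $S$-normalized subgroup $\Lambda^+_S G^{\mathbb C}_\sigma$. The paper's argument is cleaner here: from $\gamma^*\tilde F = \rho\,\tilde F\,L_+$ with $L_+ := W_+ V_+(\gamma^*W_+)^{-1}$, one observes that $L_+$ lies in both $\Lambda^+ G^{\mathbb C}_\sigma$ and (since $\gamma^*\tilde F$ and $\rho\tilde F$ are real) in $\Lambda G_\sigma$, hence $L_+\in K$. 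Your conclusion $\gamma^*f = \rho f$ survives either way, since you project to $G/K$.

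Second, the ``main obstacle'' you flag in the converse direction is not an obstacle at all. The factor $\tilde V_+ = W_+^{-1}\,k\,(\gamma^*W_+)$ is manifestly in $\Lambda^+ G^{\mathbb C}_\sigma$ (respectively carries the $\tilde P$-twist) because $k(z,\bar z)\in SO^+(1,3)\times O(n)\subset K^{\mathbb C}$ is $\lambda$-independent and $W_+,\gamma^*W_+\in\Lambda^+ G^{\mathbb C}_\sigma$; no appeal to the $K\cdot S$ refinement is needed. This is exactly the computation already carried out in the proof of Theorem~\ref{thm-symF_-}.
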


\begin{proof}
Since $\gamma^*C$ and $CV_+$ satisfy the same ODE by (the proof of ) \eqref{symF_-},
they only differ by some matrix $\rho \in \Lambda SO^+(1,n+3,\C)_{\sigma}$ which does not depend on $z$ nor $\bar{z}$.
Since $C$, $\gamma^*C$ and $V_+$ are contained in the connected component of our loop group,  also the matrix $\rho(\lambda)$ is contained in the this group. We have seen in Theorem \ref{th-symm-1} that $\rho \in
 (\Lambda SO^+(1,n+3)_{\sigma})^0$, if $\gamma$ is associated with a symmetry. Conversely, if we can choose $\rho$ to be in this group, then in the Iwasawa decomposition $ F = C W_+$ we obtain
$\gamma^*F = \rho F L_+$,
where, by our assumption,  $L_+\in  (\Lambda SO^+(1,n+3)_{\sigma})^0$. Hence
$L_+$ is independent of $\lambda$ and contained in $K$. Now
$\gamma^*f = \rho f$ follows.
\end{proof}

\begin{remark}
The choice of $\rho$ above is in some cases not unique, since it can happen that there exists some $\beta (\lambda)$ such that  $\beta F = F S_+$
 ( non-trivial "isotropy of the dressing action" ). In this case one can choose $\rho$, but equally well $\rho \beta$, as monodromy matrix.
\end{remark}

So far we have only considered harmonic maps defined on contractible domains. But it is easy to extend the discussion to Riemann surfaces $M$ which are either non-compact or of positive genus.

The following result can be proven as in \cite{DPW}, \cite{Do-Ha}, \cite{Do-Ha2}.
\begin{theorem}
Let $M$ be a Riemann surface which is either non-compact or compact of positive genus.

(1) Let $\mathcal{F}:M \rightarrow G/K$ be a harmonic map and $\tilde{\mathcal{F}}$ its lift to the universal cover $\tilde{M}$. Then there exists a normalized potential and a holomorphic potential for $\mathcal{F}$, namely the corresponding potentials for $\tilde{\mathcal{F}}$. Moreover, these potentials satisfy (\ref{symmetryeta}) for every $g \in \pi_1 (M)$.

(2) Conversely, starting from some potential producing a harmonic map
$\tilde{\mathcal{F}}$ from $\tilde{M}$ to $G/K$  and satisfying
(\ref{symmetryeta}) for every $\gamma \in \pi_1 (M)$,
one obtains a harmonic map $\mathcal{F}$ on $M$ if and only if

(2a) The monodromy matrices $\chi(g, \lambda)$ associated with
$g \in \pi_1 (M)$, considered as automorphisms of $\tilde{M}$, are elements of $(\Lambda G_{\sigma})^0$.

(2b) There exists some $\lambda_0 \in S^1$ such that $\chi(g,\lambda_0) = I$ for  all $g \in \pi_1 (M)$, i.e.
\begin{equation}
\begin{split}
F(g.z,\overline{g.z},\lambda=\lambda_0)&=\chi(g, \lambda  = \lambda_0) F(z, \bar{z}, \lambda = \lambda_0)\\
&=  F(z, \bar{z}, \lambda = \lambda_0)  \mbox{mod} \ K
\end{split}
\end{equation} for all $g \in \pi_1 (M)$.
\end{theorem}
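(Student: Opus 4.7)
The plan is to reduce everything to the simply-connected case already handled by Theorems \ref{thm-DPW}, \ref{th-symm-1}, and \ref{thm-symF_-}. Since $M$ is either non-compact or compact of positive genus, uniformization guarantees that the universal cover $\tilde{M}$ is either $\C$ or the unit disk, hence contractible. For part (1), I would first apply Theorem \ref{thm-DPW} to the lift $\tilde{\mathcal{F}}: \tilde{M} \to G/K$ to obtain a normalized potential and, analogously, a holomorphic potential for $\tilde{\mathcal{F}}$; by definition these are the potentials for $\mathcal{F}$. Each element $g \in \pi_1(M)$ acts on $\tilde{M}$ as a biholomorphic deck transformation $\gamma_g$ with $\tilde{\mathcal{F}} \circ \gamma_g = \tilde{\mathcal{F}}$. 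This is precisely a symmetry in the sense of Section 3 with $R = I$, and the transformation law \eqref{symmetryeta} is then a direct application of Theorem \ref{thm-symF_-}.

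For the ``only if'' half of (2), I would lift the hypothetical harmonic map $\mathcal{F}: M \to G/K$ to $\tilde{\mathcal{F}}$ on $\tilde{M}$ and apply Theorem \ref{th-symm-1} with $\hat{R} = I$: the extended frame $F(z,\bar{z},\lambda)$ on $\tilde{M}$ satisfies $\gamma_g^* F = \chi(g,\lambda) F k_g$ with $\chi(g,\lambda) \in \Lambda G^{\mathbb{C}}_{\sigma}$ and $k_g \in K$. Because $F$ takes values in $\Lambda G_{\sigma}$ along $S^1$, so does each $\chi(g,\cdot)$, yielding (2a). Setting $\lambda = 1$, $F(z,\bar{z},1)$ is a $G$-valued frame of $\mathcal{F}$ itself, and the fact that $\mathcal{F}$ is well-defined on the quotient $M$ forces $\chi(g,1)$ to lie in $K$; this gives (2b) with $\lambda_0 = 1$.

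For the ``if'' direction of (2), suppose a potential on $\tilde{M}$ satisfies \eqref{symmetryeta} for every $g \in \pi_1(M)$. I would solve $dC = C\eta$ on $\tilde{M}$ and apply the Iwasawa splitting $C = F \tilde{F}_+^{-1}$ to produce $F$. By Theorem \ref{thm-sym-F}, $\gamma_g^* C = \rho(g,\lambda) C V_+$ for some $\rho(g,\lambda)$, and hypothesis (2a) is exactly the condition that the resulting monodromy $\chi(g,\lambda)$ of $F$ lies in $(\Lambda G_{\sigma})^0$; once this is known, the right factor in $\gamma_g^* F = \chi(g,\lambda) F k_g$ is forced to be $K$-valued. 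Finally, (2b) says that at $\lambda = \lambda_0$ one has $\chi(g,\lambda_0) \equiv I \pmod{K}$ for every $g$, so the frame $F(\cdot,\lambda_0)$ descends from $\tilde{M}$ to $M$, yielding the desired harmonic map. The principal technical obstacle is keeping the gauge bookkeeping straight through the two decomposition theorems: one must verify that the $V_+$ in \eqref{symmetryeta} converts, via the Iwasawa splitting, into a $K$-valued right factor, and that the monodromies $\chi(g,\lambda)$ obey a cocycle identity under composition in $\pi_1(M)$ so that $\pi_1(M)$ acts consistently on all of the loop-group objects involved.
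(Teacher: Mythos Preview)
The paper does not actually prove this theorem; it simply writes ``can be proven as in \cite{DPW}, \cite{Do-Ha}, \cite{Do-Ha2}.'' Your sketch is precisely the argument those references carry out---lift to the contractible universal cover, apply Theorems \ref{thm-DPW}, \ref{th-symm-1}, \ref{thm-symF_-}, \ref{thm-sym-F} with $R=I$ for each deck transformation---so your approach is the intended one and is correct in outline.

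One point needs tightening. In both directions you phrase the closing condition as ``$\chi(g,1)\in K$'' or ``$\chi(g,\lambda_0)\equiv I\pmod K$'', but condition (2b) really is $\chi(g,\lambda_0)=I$. Since $\chi$ acts on $G/K$ from the \emph{left} while $K$ acts on the right, having $\chi\in K$ does not force $\chi\cdot f(z)=f(z)$ away from the base point, so it is not enough to make $f_{\lambda_0}$ descend. For the ``only if'' direction the stronger conclusion $\chi(g,1)=I$ does hold, and the reason is the $\lambda$-independence of the gauge $k(g,z)$ in Theorem \ref{th-symm-1}: comparing $\gamma_g^*F(z,\lambda)=\chi(g,\lambda)F(z,\lambda)k(g,z)$ at $\lambda=1$ with the relation $\gamma_g^*F(z,1)=F(z,1)k(g,z)$ (which holds because $F(\cdot,1)$ frames the $\pi_1$-invariant map $\tilde{\mathcal F}$ with the \emph{same} $k$) yields $\chi(g,1)=I$ on the nose. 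With this correction your argument goes through.
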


 Note that in these cases, $\det\chi$ will always be $1$.

To include the case $M = S^2$ we note that the discussion of symmetries carried out above also applies to harmonic maps from $S^2$ to our symmetric space $G/K$, since we know
\begin{theorem}\label{th-potential-sphere} \cite{DoWa1}
Every harmonic map from $S^2$ to any inner symmetric space $G/K$
can be obtained from some meromorphic normalized potential.
\end{theorem}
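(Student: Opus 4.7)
The plan is to reduce to the contractible case via stereographic projection and then show that the resulting normalized potential extends meromorphically across the removed point. First I would choose a point $p_\infty \in S^2$ and work on the contractible open set $\D = S^2\setminus \{p_\infty\} \cong \C$, picking a base point $z_0\in\D$. By Theorem \ref{thm-DPW}, the restriction $\mathcal{F}|_\D$ admits a $\lambda^{-1}\mathfrak{p}^{\mathbb{C}}$-valued meromorphic $(1,0)$-form $\eta$ on $\D$, obtained as $\eta = F_-^{-1}\, dF_-$ after Birkhoff factorization of the extended frame $F(z,\lambda)$ normalized at $z_0$. All that remains is to verify that $\eta$, viewed as a $(1,0)$-form on $S^2$, has at most a pole at $p_\infty$.

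To control the behavior near $p_\infty$, I would introduce a second coordinate $w$ centered at $p_\infty$, with $w = 1/z$ on the overlap $\D\cap\tilde\D$, and repeat the DPW construction in the $w$-chart. This produces a second normalized potential $\tilde\eta$, meromorphic in a neighborhood of $w=0$, together with a meromorphic frame $\tilde F_-$. The two extended frames $F$ and $\tilde F$ lift the same harmonic map $\mathcal{F}$, so on the overlap they are related by $\tilde F(w,\lambda) = L(\lambda) F(z(w),\lambda) k(w,\bar w)$ for some $L(\lambda)\in \Lambda G_\sigma$ coming from the change of base point and some $k:\C^*\to K$. Performing Birkhoff decomposition on both sides and invoking uniqueness on the big cell, I would derive a relation of the form
\begin{equation*}
\tilde F_-(w,\lambda) = L(\lambda)\, F_-(z(w),\lambda)\, V_+(w,\lambda),
\end{equation*}
with $V_+\in \Lambda^+ G^{\mathbb{C}}_\sigma$ depending holomorphically on $w$ near $0$. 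Then differentiating gives the gauge relation $\tilde\eta = V_+^{-1}\eta V_+ + V_+^{-1}\, dV_+$ (after pull-back by $z = 1/w$), from which the meromorphy of $\tilde\eta$ at $w=0$ together with the holomorphy of $V_+$ in $w$ forces $\eta$ to be meromorphic at $p_\infty$ as a $(1,0)$-form on $S^2$.

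The main obstacle is the step where one transports the Birkhoff factorization between the two charts: \emph{a priori} neither $F$ nor $\tilde F$ need lie in the big Birkhoff cell on the whole overlap, and even when they do, the uniqueness of the factorization only determines $V_+$ up to the isotropy of the dressing action. To circumvent this I would shrink the overlap to an annulus where both Birkhoff factorizations are defined (discarding at most a discrete set of points, where the final potential will simply acquire additional poles), and use the normalization $F_-(z_0,\lambda)=\tilde F_-(w_0,\lambda)=e$ to pin down $L(\lambda)$ uniquely. The resulting $V_+$ is then automatically a genuine element of $\Lambda^+ G^{\mathbb{C}}_\sigma$.

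Finally, one should observe that the argument does not use anything specific to $SO^+(1,n+3)/SO^+(1,3)\times SO(n)$: the construction works for any inner symmetric space $G/K$, since all the necessary loop-group decompositions of Theorem \ref{thm-decomposition} remain valid. Thus the $S^2$ case is handled by the same scheme as contractible domains, with the extra ingredient being the chart-comparison argument above; this fits the pattern already established for the non-compact case in \cite{Do-Ha5}, adapted here to the one missing point on the sphere.
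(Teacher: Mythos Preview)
The paper does not give its own proof of this statement: it is quoted from \cite{DoWa1}, and in Section~6.1 the authors simply write ``In \cite{DoWa1}, we have given a proof for the existence of the normalized potential, whence for $S^2$ this potential is already invariant.'' From the surrounding remarks (and the reference to \cite{BuGu}) the argument in \cite{DoWa1} evidently passes through the finite uniton property of harmonic two-spheres in inner symmetric spaces: the extended frame takes values in algebraic loops, which forces both $F_-$ and the entries of the normalized potential to be rational functions on $S^2$. Your chart-comparison strategy is a genuinely different route, so a direct comparison with the paper's proof is not possible here.

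That said, your argument has a real gap at the key step. You assert that on the overlap one obtains
\[
\tilde F_-(w,\lambda)=L(\lambda)\,F_-(z(w),\lambda)\,V_+(w,\lambda)
\]
with $V_+$ \emph{holomorphic in $w$ near $0$}, and then use this to transport meromorphy of $\tilde\eta$ at $w=0$ back to $\eta$. But $V_+ = F_-^{-1}L^{-1}\tilde F_-$ (equivalently $V_+ = F_+\,k\,\tilde F_+^{-1}$), and none of the ingredients on the right are known to behave well as $w\to 0$: the frame $F$ and its Birkhoff factor $F_+$ were built on $\C=S^2\setminus\{p_\infty\}$ and have no a priori extension across $p_\infty$, while $F_-$'s behaviour at $z=\infty$ is precisely what you are trying to establish. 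So the claimed holomorphy (or even meromorphy) of $V_+$ at $w=0$ is circular. Shrinking to an annulus, as you suggest, does not help: it removes isolated bad points inside the overlap but says nothing about the boundary point $w=0$ itself.

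To make your approach work you would need an independent reason why the extended frame, or at least $F_-$, has controlled growth as $z\to\infty$. That is exactly what the finite uniton theorem supplies: once the extended frame is known to be a Laurent polynomial in $\lambda$ with coefficients that extend over $S^2$, the Birkhoff factor $F_-$ is globally meromorphic and $\eta$ is rational. Without invoking some such global input about harmonic two-spheres, the two-chart comparison alone does not close.
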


\begin{remark} When considering an orientation preserving symmetry of a harmonic map $f: S^2 \rightarrow G/K$, $ f(\gamma.z) = Rf(z)$, we know that the holomorphic automorphism $\gamma$ of $S^2$ has a fixed point $z_0$.
Hence we can consider $\C \cong S^2 \setminus \lbrace z_0 \rbrace$ and apply results obtained for non-compact domains.

From  Theorem \ref{thm-induce-sym} we know that  a Willmore two-sphere $y$ with symmetry $R$ induces a conformal automorphism $\gamma$ of $S^2$ and the remark just above applies.

If one has a harmonic map from $\C$, how can one say that whether does it come from a harmonic map from $S^2$ or not? First we note that from Theorem \ref{th-potential-sphere} and the proof in \cite{DoWa1}, one will see that the holomorphic functions of the normalized potential $\eta$ are in fact all rational functions (on $S^2$), and the integration of $dF_-=F_-\eta$, $F_-(z_0,\lambda)=I$, are also meromorphic on $S^2$. These two conditions are exactly the condition to ensure that the harmonic map is defined on $S^2$.
\end{remark}
\vspace{2mm}

Finally,  in the context of Willmore surfaces the discussion above only concerns symmetries of the harmonic conformal Gauss map of some Willmore immersion. But this induces directly a statement about the symmetries of Willmore surfaces.

\begin{theorem}\label{th-sym-y}
With the notation of the previous theorem, if the conformally harmonic map $f$ induces a unique Willmore surface $y=[Y]$ into $S^{n+2}$ (i.e., $y$ is either not S-Willmore or  the dual surface of $y$ reduces to a point), then equation
\eqref{symmfor-f} induces the symmetry,
\begin{equation}
\gamma^* y =[\gamma^*Y]= [\chi(\lambda) Y].
\end{equation}
If $f$ induces a pair of dual Willmore surfaces $y=[Y]$ and $\hat{y}=[\hat{Y}]$ into $S^{n+2}$
then equation
\eqref{symmfor-f} induces the symmetry
\begin{equation}
\gamma^* y: =[\gamma^*Y]= [\chi(\lambda) Y],
\ \hbox{ or }\ \gamma^* \hat{y}: =[\gamma^*Y] =[\chi(\lambda) Y].
\end{equation}
\end{theorem}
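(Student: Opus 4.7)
The plan is to lift the established frame-level symmetry to the Willmore surface and its associated family. From Theorem \ref{th-symm-1} we have $\gamma^* F(z,\bar z,\lambda) = \chi(\lambda)\, F(z,\bar z,\lambda)\, k(z,\bar z)$, with $k$ valued in $SO^+(1,3) \times O(n)$ and independent of $\lambda$. With the canonical Willmore frame of Proposition 2.2 of \cite{DoWa1}, the lift $Y_\lambda$ of each member of the associated family is recovered as $Y_\lambda = F(z,\bar z,\lambda)\, v$ for a fixed null vector $v \in \R^{n+4}$ that depends only on the geometric construction and not on the spectral parameter.

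First I would verify, using the explicit $k_1$ displayed in the remark after the first theorem of Section~4.1, that $v$ is an eigenvector of $k$ with eigenvalue $e^{\tau(z,\bar z)}$; this is a short direct calculation showing, for instance, that $v=(1,-1,0,0)^{T}$ (in the $SO^+(1,3)$-block) satisfies $k_1 v = e^{\tau} v$. Then at an arbitrary $\lambda$ one computes
\[
\gamma^* Y_\lambda \;=\; (\gamma^* F)\, v \;=\; \chi(\lambda)\, F\, k\, v \;=\; e^{\tau}\, \chi(\lambda)\, F\, v \;=\; e^{\tau}\, \chi(\lambda)\, Y_\lambda,
\]
so in particular $\gamma^* y_\lambda = [\chi(\lambda)\, Y_\lambda]$. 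This settles the first case of the theorem, in which the conformal Gauss map $f$ determines a unique Willmore immersion (either $y$ is not S-Willmore, or $\hat y$ degenerates to a point).

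For the S-Willmore case with non-degenerate dual $\hat y$, the conformal Gauss map $f$ corresponds to the unordered pair $\{y,\hat y\}$, so that $\chi(\lambda) f$ corresponds to the pair $\{[\chi(\lambda) Y], [\chi(\lambda) \hat Y]\}$. Applying $\gamma^*$ to $y$ yields a Willmore surface whose conformal Gauss map is $\gamma^* f = \chi(\lambda) f$, so $\gamma^* y$ must coincide with one of the two members of this pair. If $\gamma^* y = [\chi(\lambda)\, Y]$ the first alternative of the conclusion holds. Otherwise $\gamma^* y = [\chi(\lambda)\, \hat Y]$, and since dualisation commutes with the action of $\chi(\lambda) \in O^+(1,n+3,\C)$, one equivalently has $\gamma^* \hat y = [\chi(\lambda)\, Y]$.

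The main technical point is to confirm that the same null vector $v$ governs the extraction $Y_\lambda = F_\lambda\, v$ at every $\lambda$; this reduces to the $\lambda$-independence of both $k$ and the extraction map, which is immediate from the way the spectral parameter enters the associated family. The conceptually subtler part is the S-Willmore dichotomy, where one must allow the possibility that the symmetry exchanges $y$ with its dual $\hat y$; this is precisely what the ``or'' in the statement is designed to accommodate, and there is no further argument to push beyond recording both possibilities.
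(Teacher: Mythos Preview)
Your dichotomy argument for the S-Willmore case is essentially the paper's proof; the paper in fact says ``We only need to prove the second claim'' and argues exactly as you do, concluding via the fact that at most two Willmore surfaces share a given conformal Gauss map.

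However, your direct computation for the first case is circular. The explicit matrix $k_1$ displayed in the remark following Theorem~4.2 was derived \emph{under the assumption} $\gamma^*Y = e^{\tau} R Y$, i.e.\ assuming that $(\gamma,R)$ is already a symmetry of the Willmore immersion $y$. In Theorem~4.10 the hypothesis is only the harmonic-map symmetry $\gamma^* f = \chi(\lambda) f$; from this one gets $\gamma^*F = \chi F k$ for \emph{some} $k \in SO^+(1,3)\times O(n)$, but nothing forces the null direction $v$ to be an eigenvector of $k$. Indeed, unwinding the definitions, the condition $k v = e^{\tau} v$ is \emph{equivalent} to $\gamma^*Y = e^{\tau}\chi Y$, which is exactly the conclusion to be proved. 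So quoting that particular $k_1$ assumes what you want.

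The paper avoids this by noting that $\chi F = \gamma^*(Fk^{-1})$ has the same Maurer-Cartan form $\alpha$ as $F$, so $\pi_0(\chi F) = [\chi Y]$ is a Willmore surface whose conformal Gauss map is $\gamma^* f$. Since $\gamma^* y$ also has conformal Gauss map $\gamma^* f$, the uniqueness (in case one) or uniqueness-up-to-dual (in case two) finishes both parts. You already have this argument written out for the second case; it covers the first case as well, and the eigenvector computation should simply be dropped.
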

\begin{proof}  We only need to prove the second claim. So let $Y$ be a lift of $y$ with $f$ as its conformal Gauss map. Let $\hat{y}=[\hat{Y}]$ denote the (non-degenerate) dual surface of $y$.
Let $F$ be a local lift of $f$ with its Maurer-Cartan form $\alpha=F^{-1}dF$ of the form stated in Proposition 2.2 of \cite{DoWa1}. Then we obtain $\pi_0(F)=[Y]=y$ from  Proposition 2.2 of \cite{DoWa1}. Now from the proof of Theorem \ref{thm-symF_-}, one derives that $\gamma^*F=\chi(\lambda)Fk$ for some $k=k(z,\bar{z})$. As a consequence,
$$(\gamma^*(Fk^{-1}))^{-1}d(\gamma^*(Fk^{-1}))=
(\chi(\lambda)F)^{-1}d(\chi(\lambda)F)=F^{-1}dF=\alpha.$$
 Therefore, since $$ f = F\mod K = (Fk^{-1}) \mod K,$$ the map $\gamma^*f$ is the conformal Gauss map of $\pi_0({\gamma^*(Fk^{-1}))=}\pi_0(\chi(\lambda)F)
=[\chi(\lambda)Y]=\chi(\lambda)y$.
On the other hand, $\gamma^*f$ is the conformal Gauss map of $\gamma^*y$ and also of $\gamma^*\hat{y}$  and since there exist only two  Willmore surfaces with the same conformal Gauss map (\cite{Ejiri1988}, \cite{Ma2005}), the claim follows.
\end{proof}


\subsection{Willmore immersions admitting finite order symmetries}

The incorporation of symmetries into the loop group formalism is
generally not easy. However, there are a few cases where this is relatively easy
to accomplish. In this subsection we consider Willmore immersions and associated (conformally) harmonic maps defined on some simply connected Riemann surface $\tilde{M}$ and assume that we have some symmetry
$(\gamma, R)$  of finite order, i.e. we assume $\gamma^n = Id$ and $R^n = I$ for some positive integer $n$. Moreover, in all cases $\gamma$ has a fixed point in $\tilde{M}.$

We claim
\begin{theorem}\label{thm-sym-rot}
Let  $y: \tilde{M} \rightarrow S^{n+2}$ be a Willmore immersion
and let $(\gamma,R)$ be a symmetry of $y$ of finite order.
Let $z_0$ denote a fixed point of $\gamma$. Then there exists an extended frame $F$, normalized to $F(z_0,\lambda) = I$, of the conformal Gauss map associated with $y$ such that for all $z \in \tilde{M}$ we have the identity
\begin{equation}\label{rot-F}
F(\gamma.z, \lambda) = T F(z,\lambda)T^{-1},
\end{equation}
 where $T \in K \subset SO^+(1, n+3)$ if $\det R=1 $ and $T\tilde{P} \in K \subset SO^+(1, n+3)$ if $\det R=-1$.
Moreover, if $ F = F_- L_+$ denotes the Birkhoff splitting of $F$, then we have
\begin{equation}\label{rot-F_-}
F_-(\gamma.z, \lambda) = T F_-(z,\lambda)T^{-1}
\end{equation}
and for the Maurer-Cartan form $\eta$ of $F_-$, i.e. the normalized potential of the conformal Gauss map of $y$, we obtain
\begin{equation}\label{rot-eta}
\eta(\gamma.z, \lambda) = T \eta(z,\lambda)T^{-1}.
\end{equation}
Conversely, if we start from some normalized potential $\eta$ satisfying
(\ref{rot-eta}) for some finite order symmetry $(\gamma,R)$, then the solution to the ode $dC = C \eta$, $C(z_0,\lambda) = I$, where $z_0$ denotes a fixed point of $\gamma$, satisfies (\ref{rot-F_-}) and the corresponding frame $F$, obtained from $C$ by the (locally) unique Iwasawa splitting,  satisfies (\ref{rot-F}).
From this we obtain
\begin{equation}
f(\gamma.z,\lambda) = T f(z,\lambda)
\end{equation}
and, if $f$ induces a Willmore immersion $y$, then $y$ inherits the symmetry  in Theorem \ref{th-sym-y}.
\end{theorem}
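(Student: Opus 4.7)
The plan is to exploit the fixed point $z_{0}$ of $\gamma$ to force the monodromy loop to collapse to a constant, and then transfer this constancy successively through the Birkhoff and Iwasawa decompositions.

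First I would apply Theorem~\ref{th-symm-1} to the frame $F$ normalized by $F(z_{0},\lambda)=I$, obtaining $F(\gamma.z,\lambda)=\chi(\lambda)F(z,\lambda)k(z,\bar z)$. Evaluating at $z=z_{0}$ and using $\gamma.z_{0}=z_{0}$ gives $I=\chi(\lambda)k(z_{0})$, hence $\chi(\lambda)=k(z_{0})^{-1}$ is independent of $\lambda$. Set $T:=\chi$; by Theorem~\ref{th-symm-1} we have $T\in K$ (resp.\ $T\tilde{P}\in K$) according to the sign of $\det R$. Iterating the symmetry equation at $z_{0}$ using $\gamma^{n}=\mathrm{id}$ and $R^{n}=I$ shows moreover that conjugation by $T$ is of finite order.

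Next, for the Birkhoff factor, I would apply Theorem~\ref{thm-symF_-} to obtain $\gamma^{*}F_{-}=TF_{-}V_{+}$ for some $V_{+}\in\Lambda^{+}G^{\mathbb{C}}_{\sigma}$. Since $T$ is $\lambda$-independent and $F_{-}(\infty)=I$, the conjugate $TF_{-}T^{-1}$ still lies in $\Lambda^{-}_{*}G^{\mathbb{C}}_{\sigma}$; I can therefore rewrite $TF_{-}V_{+}=(TF_{-}T^{-1})(TV_{+})$. The uniqueness of the Birkhoff decomposition in the big cell (Theorem~\ref{thm-decomposition}(ii)(2)) then forces $\gamma^{*}F_{-}=TF_{-}T^{-1}$ and $V_{+}\equiv T^{-1}$, which is exactly \eqref{rot-F_-}. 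Substituting $V_{+}=T^{-1}$ (so $dV_{+}=0$) into the general transformation rule \eqref{symmetryeta} for the normalized potential instantly yields $\gamma^{*}\eta=T\eta T^{-1}$, which is \eqref{rot-eta}.

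For the full extended frame, I would use the local Iwasawa factorization $F_{-}=FW$ with $F\in\Lambda G_{\sigma}$ and $W\in\Lambda^{+}_{S}G^{\mathbb{C}}_{\sigma}$, apply $\gamma^{*}$ to the identity $\gamma^{*}F_{-}=TF_{-}T^{-1}$, and compare with the factorization $(TFT^{-1})(TWT^{-1})$ via the uniqueness of Iwasawa (Theorem~\ref{thm-decomposition}(i)(3)); this forces $\gamma^{*}F=TFT^{-1}$, which is \eqref{rot-F}. I expect the main obstacle here to be showing that $TWT^{-1}$ is again in $\Lambda^{+}_{S}G^{\mathbb{C}}_{\sigma}$, i.e.\ that the solvable subgroup $S$ is stable under conjugation by $T$. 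This is exactly where the finite-order hypothesis is essential: because conjugation by $T$ generates a finite cyclic group of automorphisms of $K^{\mathbb{C}}$, one can either replace $S$ by its $T$-invariant analogue inside $K^{\mathbb{C}}$, or absorb the discrepancy by a further $K$-valued gauge $g$ with $g(z_{0})=I$ whose existence reduces to a finite cocycle equation which is consistent precisely because $T$ has finite order.

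For the converse, uniqueness of ODE solutions handles everything: both $z\mapsto C(\gamma.z,\lambda)$ and $z\mapsto TC(z,\lambda)T^{-1}$ satisfy the ODE $dX=X\eta$ on $\tilde M$ (using $\gamma^{*}\eta=T\eta T^{-1}$ for the former and the $\lambda$-independence of $T$ for the latter), and both take the value $I$ at $z=z_{0}$ because $\gamma.z_{0}=z_{0}$. Hence they coincide, which is \eqref{rot-F_-} for $C=F_{-}$. Applying the same Iwasawa step as in the forward direction then produces the extended frame $F$ satisfying \eqref{rot-F}; the induced relation $f(\gamma.z,\lambda)=Tf(z,\lambda)$ for the conformal Gauss map, combined with Theorem~\ref{th-sym-y}, yields the stated symmetry of the Willmore surface $y$.
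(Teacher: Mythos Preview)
Your argument tracks the paper's proof closely: evaluate at the fixed point $z_{0}$ to collapse $\chi(\lambda)$ to a constant $T\in K$, push this through Birkhoff uniqueness to obtain \eqref{rot-F_-} and hence \eqref{rot-eta}, and then recover \eqref{rot-F} via an Iwasawa step. Your Birkhoff argument and the converse via ODE uniqueness are fine and essentially identical to the paper's.

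The one genuine discrepancy is your resolution of the $S$-stability obstacle. You correctly isolate that uniqueness in the Iwasawa step needs $TWT^{-1}\in\Lambda^{+}_{S}G^{\mathbb{C}}_{\sigma}$, i.e.\ $TST^{-1}\subset S$, but you then claim this is ``exactly where the finite-order hypothesis is essential'' and propose either averaging $S$ over the cyclic group generated by $T$ or solving a finite cocycle equation. Neither fix is clearly sound (an intersection $\bigcap_{j}T^{j}ST^{-j}$ need not retain the property that $K\times S\to K^{\mathbb{C}}$ is a diffeomorphism onto an open set), and this is not what the paper does. The paper instead invokes a structural fact from \cite{DoWa1}: the specific solvable subgroup $S\subset K^{\mathbb{C}}$ constructed there is already normalized by block-diagonal elements $T=(T_{1},T_{2})\in SO^{+}(1,3)\times O(n)$, so $TST^{-1}=S$ holds outright. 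Concretely, the paper splits the $\lambda^{0}$-term of the Birkhoff plus-factor as $W_{0}=s\,\hat W_{0}$ with $s\in S$ and $\hat W_{0}\in K$, uses $T$-invariance of $S$ to separate $s\circ\gamma=TsT^{-1}$ from $\hat W_{0}\circ\gamma=T\hat W_{0}k$, and then gauges the original frame by $\hat W_{0}^{-1}$ to produce $\hat F=F\hat W_{0}^{-1}$ satisfying $\gamma^{*}\hat F=T\hat F T^{-1}$. In the proof as written, the finite-order hypothesis is used only to guarantee the fixed point $z_{0}$ (already given in the statement), not to stabilize $S$.
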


\begin{proof}
By  Theorem \ref{th-symm-1} we know $F(\gamma.z,\lambda) = \chi (\gamma, \lambda) F(z, \lambda) k (\gamma,z)$
 with $\chi (\gamma, \lambda) \in \Lambda SO^+(1, n+3)_\sigma^\circ$ if $\det R=1$ and  $(\chi (\gamma, \lambda)\cdot\tilde{P}) \in \Lambda SO^+(1, n+3)_\sigma^\circ$ if $\det R=-1$.

Evaluating at $z = z_0$ which we choose to be $0$, we obtain
$$I = F(0,\lambda) = F(\gamma.0,\lambda) = \chi (\gamma, \lambda) F(0, \lambda) k(\gamma,0) = \chi (\gamma, \lambda) k(\gamma,0) .$$
This shows
$$\chi (\gamma,\lambda) = k(\gamma,0)^{-1} =T$$
and implies that $\chi$ is independent of $z$ and of $\lambda$.
Performing a Birkhoff decomposition $ F = F_- W_+$ we obtain  $$F_- (\gamma.z,\lambda) W_+(\gamma.z,\lambda) = F(\gamma.z,\lambda)  = T F(z,\lambda) k(\gamma,z) = T F_-(z,\lambda)T^{-1} \cdot
T W_+(z,\lambda) k(\gamma,z)$$ from which, together with the initial condition, we infer
\begin{equation*}
F_-(\gamma.z, \lambda) = T F_-(z,\lambda)T^{-1},\ \hbox{ and }\ W_+(\gamma.z,\lambda) = T W_+(z,\lambda) k(\gamma,z).
\end{equation*}

Splitting off the leading term $W_0$ of $W_+$ we obtain $W_+ = W_{++} W_0$.
The equation above now yields the relations $$ W_{++} \circ \gamma =
TW_{++} T^{-1}\ \hbox{ and }\ W_0 \circ \gamma = T W_0 k.$$
Writing $$W_0= \hbox{diag}(W_1,W_2) \in SO(1,3,\C) \times O(n,\C),$$ we can decompose
$ W_0 = s \hat{W}_0$ with $s= \hbox{diag}(s_1,s_2) \in SO(1,3,\C) \times O(n,\C)$ and
$\hat{W}_1= \hbox{diag}(\hat{W}_1,\hat{W}_2) \in SO(1,3,\C) \times O(n,\C)$ such that $\hat{W}_1 \in SO(1,3)$,
$\hat{W}_2 \in O(n)$, $s_1 \in \mathcal{ S}$ (see Theorem 4.5 in \cite{DoWa1}) and $s_2$ in some Borel subgroup of $O(n,\C)$. Note that now
 $W_1 = s_1 \hat{W}_1 $ and $ W_2 = s_2 \hat{W}_2$.
Since  $T$ is of the form $ T = (T_1,T_2)$ and since conjugation by $T$ leaves the group $S$ of  Theorem 4.5 in \cite{DoWa1} invariant,
we conclude further $$s \circ \gamma = TsT^{-1}\ \hbox{ and }\ \hat{W}_0 \circ \gamma = T\hat{W}_0 k.$$
So $$\hat{W}_0^{-1} \circ \gamma = (T\hat{W}_0 k)^{-1}=k^{-1}\hat{W}_0^{-1}T^{-1}.$$
Next we replace $F$ with $\hat{F} =
F \hat{W_0}^{-1} $ and obtain by a direct computation
$$\hat{F} \circ \gamma =(F\circ \gamma )\cdot( \hat{W_0}^{-1}\circ \gamma)=T F k\cdot k^{-1} \hat{W_0}^{-1}T^{-1}  = T \hat{F} T^{-1}.$$
Checking the construction above step by step one observes that $\hat{F}(0,\lambda) = I$ holds.

For the converse it suffices to split $C = F W_+$ locally such that $W_0 \in S$, which is possible by Theorem 4.5 in \cite{DoWa1}. Hence locally we obtain
 $\gamma^*F=TFT^{-1}$. Since $f$ is real analytic, we see that  $\gamma^*F=TFT^{-1}$ holds globally.
\end{proof}

\begin{remark}
Most Willmore surfaces do not have any symmetries at all. We have investigated the case of finite order symmetries above. Of course, there  also
exist Willmore surfaces which have symmetries of infinite order. Even 1-parameter groups of symmetries occur (''equivariant Willmore surfaces'') as well as the cases of two-dimensional and three-dimensional symmetry groups.
These cases need a separate treatment. But just as an illustration of what can happen we list below an equivariant example.
\end{remark}


\subsection{Some examples}

\begin{example}\label{ex-1}  Let  $$\eta=\lambda^{-1}\left(
                      \begin{array}{cc}
                        0 & \hat{B}_1 \\
                        -\hat{B}_1^tI_{1,3} & 0 \\
                      \end{array}
                    \right)dz,\ \hbox{ with }\ \hat{B}_1=\frac{1}{2}\left(
                     \begin{array}{cccc}
                       -i&  1 \\
                       i&  -1  \\
                       -z & -iz   \\
                       iz & -z    \\
                     \end{array}
                   \right).$$
That is, $f_1=\frac{z^2}{2}, f_2=z,$ and $f_3=f_4=0$, in Theorem \ref{th-willmore-iso-formula}. By \eqref{eq-willmore in s4-y-1}, we see that  The corresponding associated family of Willmore surfaces is ($r=|z|$)
\begin{equation}\label{example2} y_\lambda =\frac{1}{1+\frac{1}{4r^2}+ \frac{r^2}{4}}
  \left( 1-\frac{1}{4r^2}- \frac{r^2}{4} ,\
  \frac{i(z-\bar{z})}{2r^2} ,\ \frac{z+\bar{z}}{ 2r^2} ,\  -\frac{i(\lambda^{-1}z-\lambda \bar{z})}{2},  \frac{\lambda^{-1}z+\lambda \bar{z} }{2}
\right)^t.
                   \end{equation}
For each $\lambda\in S^1$, $y_\lambda$ is an embedded Willmore sphere in $S^4$ with Willmore energy $W(y_{\lambda})=4\pi$ and  $y_{\lambda}$ is conformally equivalent to the minimal graph  \begin{equation}x_\lambda= \left(
  \frac{i(z-\bar{z})}{2r^2} ,\frac{z+\bar{z}}{ 2r^2} ,\  -\frac{i(\lambda^{-1}z-\lambda \bar{z})}{2},  \frac{\lambda^{-1}z+\lambda \bar{z} }{2}
\right)^t \end{equation}
                   in $\mathbb{R}^4$.

Note that $\eta$  has a symmetry $(\check{\gamma},T)$  of order two $$\eta(\check{\gamma}.z):=\eta(-z)=T\eta T^{-1}.$$
                   with $T=\hbox{diag}\{1,1,-1,-1,-1,-1\}$ and $\check{\gamma}(z) = -z$.
                  The symmetry $(\check{\gamma}, T)$ yields
$$F(\check{\gamma}.z,\lambda)=TF(z,\lambda)T^{-1}.$$
Deriving  $y_{\lambda}$ from $F$ as in (8) of \cite{DoWa1}  we obtain
$$y_{\lambda}(\check\gamma.z)=T_1y_{\lambda},$$
with $T_1=\hbox{diag}\{1,-1,-1,-1,-1\}$.
\end{example}

\begin{example} (\cite{DoWa1}, Theorem 5.12) Let $$\eta=\lambda^{-1}\left(
                      \begin{array}{cc}
                        0 & \hat{B}_1 \\
                        -\hat{B}_1^tI_{1,3} & 0 \\
                      \end{array}
                    \right)dz,\ ~ \hbox{ with } ~\ \hat{B}_1=\frac{1}{2}\left(
                     \begin{array}{cccc}
                       2iz&  -2z & -i & 1 \\
                       -2iz&  2z & -i & 1 \\
                       -2 & -2i & -z & -iz  \\
                       2i & -2 & -iz & z  \\
                     \end{array}
                   \right).$$
The associated family of Willmore two-spheres $x_{\lambda}$, $\lambda\in S^1$, corresponding to $\eta$, is \begin{equation}\label{example1}
  x_{\lambda}=\frac{1}{ \left(1+r^2+\frac{5r^4}{4}+\frac{4r^6}{9}+\frac{r^8}{36}\right)}
\left(
                          \begin{array}{c}
                            \left(1-r^2-\frac{3r^4}{4}+\frac{4r^6}{9}-\frac{r^8}{36}\right) \\
                            -i\left(z- \bar{z})(1+\frac{r^6}{9})\right) \\
                            \left(z+\bar{z})(1+\frac{r^6}{9})\right) \\
                            -i\left((\lambda^{-1}z^2-\lambda \bar{z}^2)(1-\frac{r^4}{12})\right) \\
                            \left((\lambda^{-1}z^2+\lambda \bar{z}^2)(1-\frac{r^4}{12})\right) \\
                            -i\frac{r^2}{2}(\lambda^{-1}z-\lambda \bar{z})(1+\frac{4r^2}{3}) \\
                            \frac{r^2}{2} (\lambda^{-1}z+\lambda \bar{z})(1+\frac{4r^2}{3})  \\
                          \end{array}
                        \right),\ \ r=|z|.
\end{equation}
$x_{\lambda}$ is a Willmore sphere in $S^6$, which is non S-Willmore, full, and totally isotropic.

                 $\eta$ has a symmetry $( \check{\gamma},T)$ of order two $$\eta(\check{\gamma}.z):=\eta(-z)=T\eta T^{-1}.$$
                   with $T=\hbox{diag}\{1,1,-1,-1,1,1,-1,-1\}$ and $\check{\gamma}(z) = -z$.
The symmetry $(\check{\gamma},T)$ yields
$$F(\check{\gamma}.z,\lambda)=TF(z,\lambda)T^{-1}.$$
Deriving  $x_{\lambda}$ from $F$ as in  (8) of \cite{DoWa1} we obtain
$$x_{\lambda}(\check\gamma.z)=T_1x_{\lambda},$$
with $T_1=\hbox{diag}\{1,-1,-1,1,1,-1,-1\}$.
\end{example}

The examples above inherit more symmetries. In fact they admit a one parameter group symmetries, which will be discussed as equivariant Willmore surfaces in other publications. Below we will see a Willmore surface with a three fold symmetry ( and not equivariant).
\begin{example}
Set
\begin{equation}\label{eq-sym-three-order}
    f_1= -\frac{z^3}{3},\ f_2=z^4(1+z^3),\ f_3=z(1+z^3), \ f_4=2z^2+\frac{23z^5}{5}+\frac{7}{2}z^8.
\end{equation}
It is not hard to verify that $f_1'f_4'+f_2'f_3'=0$. Substituting in \eqref{eq-willmore in s4-y-1}, we will obtain a Willmore surface of three fold symmetry. To be concrete, let
\begin{equation}\label{eq-sym-three-order-T}
    \check\gamma(z)=e^{i\theta_1}z,\ \hbox{ and }\ T_{\theta}=\left(
                                                       \begin{array}{cccccc}
                                                         1 & 0 &0& 0& 0 & 0 \\
                                                         0 & 1 & 0& 0& 0 & 0 \\
                                                         0 & 0 &\cos\theta_1& \sin\theta_1& 0 & 0 \\
                                                         0 & 0 &-\sin\theta_1& \cos\theta_1& 0 & 0 \\
                                                         0 & 0& 0 & 0 &\cos\theta_1& \sin\theta_1 \\
                                                         0 & 0 & 0 & 0&-\sin\theta_1& \cos\theta_1 \\
                                                       \end{array}
                                                     \right), \hbox{ with } \theta_1=\frac{2\pi}{3}.
\end{equation}
It is easy to see that
$$\eta(\check\gamma.z)= T_{\theta}\eta(z) T_{\theta}^{-1}.$$
As a consequence, $\check\gamma$ induces a symmetry on $Y_1$ as $\check\gamma^*Y_1= T_{\theta}Y_1$.
\end{example}
\begin{remark}
  The examples above show that it is possible to compute at least some examples quite explicitly. The general picture has, of course, more parameters and needs more computation.
\end{remark}

 The example below shows that it can happen that for $R\in O^+(1,n+3)$ with $\det R=-1$ there exists some Willmore immersion $y$ satisfying
$R y(M)=y(M)$ such that on $M$ the transformation $R$ induces an orientation preserving automorphism.

\begin{example}
Let
\begin{equation}\label{eq-example R=-1}
    x=Re\left(\frac{z^2}{2}-\frac{z^4}{12},\frac{iz^2}{2}+\frac{iz^4}{12},\frac{z^3}{3},-\frac{iz}{2}+\frac{iz^5}{30},\frac{z}{2}+\frac{z^5}{30}\right)^t.
\end{equation}
It is a minimal surface in $\R^5$ and hence a Willmore surface in $S^5$.
Set $$\gamma(z)=-z, \hspace{2mm} \mbox{and} \hspace{2mm} \check R_1=\hbox{diag}(1,1,-1,-1,-1).
$$
Then we have $$x(\gamma(z))=\check R_1x,\hspace{2mm} \mbox{and} \hspace{2mm}  \det\check R_1=-1.$$
Moreover, $x$ has the lift
$$Y=\left(\frac{1+|x|^2}{2},\frac{1-|x|^2}{2},x\right).$$
It is easy to see that we obtain
$$\gamma^*Y=\check{R}\cdot Y\ \hbox{ with }\ \check{R}=\hbox{diag}(1,1,1,1,-1,-1,-1), \hspace{2mm} \mbox{and} \hspace{2mm} \det \check R=-1.$$
\end{example}

\begin{example}
Let's consider the differential 1-form  $\eta = D dz$  on $\C,$ where
\begin{equation}
D = \lambda{-1}D_{-1} + D_0 + \lambda D_1 \in \Lambda so(1,3+n)_\sigma.
\end{equation}
Then $\eta$ is a holomorphic potential of some harmonic map into $G/K$ with
$G = SO^+(1,n+3)$ and $K = SO^+(1,3) \times SO(n).$
This harmonic map will  be associated with a Willmore  surface if $D(\lambda = 1)$ has the form given for $\alpha$ in Proposition 2.2 of \cite{DoWa1} with the additional property that for the coefficients $D_{0,13}$ and
$D_{0,23}$ of $D_0$ one has $D_{0,13} + D_{0,23} \neq 0.$

Putting $C(z,\lambda) = \exp(zD)$ and $\chi(t,\lambda) =\exp(tD)$ we obtain trivially  $C(z,\lambda) = I$ the equation
\begin{equation}
C(z+t,\lambda) = \chi(t,\lambda) C(z,\lambda)
\end{equation}

Since we have assumed that $D$ is real,
$D  \in \Lambda so(1,3+n)_\sigma,$ it is obvious that $\chi(t,\lambda) \in
\Lambda SO^+(1,n+3)_\sigma.$ This implies that
the Willmore surface $y_\lambda$ associated with $\eta$ has the symmetries
$( g_t, \chi(t, \lambda), \hspace{1mm} t \in \R$.
In particular, for the surface $y$ we obtain the
1-parameter group $(g_t, \exp(tD(\lambda=1))$ of symmetries.

For CMC-surfaces in $\R^3$ one obtains this way the associated family of Delaunay surfaces. We are therefore interested in closing conditions.

\begin{theorem}\cite{DoWa-equ}
Consider the differential 1-form  $\eta = D dz$  on $\C,$ where
\begin{equation}
D = \lambda^{-1}D_{-1} + D_0 + \lambda D_1 \in \Lambda so(1,3+n)_\sigma
\end{equation}
has the same form as the Maurer-Cartan form $\alpha_{\lambda}$ in Proposition 2.2 of \cite{DoWa1},
 and where the coefficients $D_{0,13}$ and
$D_{0,23}$ of $D_0$ satisfy $D_{0,13} + D_{0,23} \neq 0.$

Then

(1) The potential $\eta$ induces a Willmore surface $y$ into $S^{n+2}$

(2) The Willmore surface $y$  has the one-parameter group of symmetries,
$( g_t, \exp(tD(\lambda = 1))$,
where $g_t(z) = z + t$. In particular, $y$ is an equivariant Willmore surface.

(3) If $\exp(2 \pi D(\lambda = 1) )= I$, then the Willmore surface $y$ descends to a
Willmore surface from the cylinder $\C/2 \pi \mathbb{Z}$ to $S^{n+2}.$
\end{theorem}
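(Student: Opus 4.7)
The overall plan is to exploit the fact that $\eta = D\,dz$ has $z$-independent coefficients: the ODE $dC = C\eta$, $C(0,\lambda) = I$, then has the explicit exponential solution $C(z,\lambda) = \exp(zD)$, and the translation symmetry of $\C$ is built directly into this formula. The three assertions follow by feeding this explicit $C$ into the loop group construction of Section 2 and the symmetry machinery of Section 4.

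For part (1), I would invoke the holomorphic potential theorem: $\eta$ is a holomorphic $(1,0)$-form on $\C$ with $\lambda\eta$ holomorphic for $\lambda\in\C$, hence it induces a harmonic map $f:\C\to G/K$ via an Iwasawa splitting $C = FW_+$. Because $D$ is assumed to have the same block shape as the Maurer-Cartan form $\alpha_\lambda$ of Proposition 2.2 of \cite{DoWa1}, one checks that after the splitting the Maurer-Cartan form of $F$ retains that shape at the base point, so $f$ is the conformal Gauss map of a conformally immersed surface. The non-degeneracy condition $D_{0,13} + D_{0,23} \neq 0$ is precisely what excludes the degenerate (non-immersed) case, and therefore guarantees that $f$ actually produces a Willmore immersion $y$ into $S^{n+2}$.

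For part (2), from $C(z,\lambda) = \exp(zD)$ one obtains at once, for every $t \in \R$,
\begin{equation*}
C(z+t,\lambda) = \exp(tD)\,C(z,\lambda) = \chi(t,\lambda)\,C(z,\lambda),
\end{equation*}
with $\chi(t,\lambda) := \exp(tD)$. Since $D\in\Lambda\mathfrak{so}(1,n+3)_\sigma$, we have $\chi(t,\lambda)\in (\Lambda SO^+(1,n+3)_\sigma)^0$. The hypotheses of Theorem \ref{thm-sym-F} are thus satisfied with $V_+ = I$ and $\rho(\lambda) = \chi(t,\lambda)$, so $g_t(z) = z+t$ is a symmetry of the harmonic map $f$ with monodromy $\chi(t,\lambda)$. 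Applying Theorem \ref{th-sym-y} lifts this symmetry to the Willmore surface, giving $y_\lambda(z+t) = \chi(t,\lambda)\,y_\lambda(z)$ for all $t \in \R$; this is the claimed one-parameter group of symmetries and shows that $y$ is equivariant.

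For part (3) I would specialize part (2) to $\lambda = 1$ and $t = 2\pi$: the relation $y(z+2\pi) = \exp(2\pi D(\lambda=1))\,y(z)$, combined with the hypothesis $\exp(2\pi D(\lambda=1)) = I$, forces $y$ to be $2\pi$-periodic in $z$ and hence to descend to a Willmore map $\C/2\pi\mathbb{Z}\to S^{n+2}$. The main subtle point in the whole argument is that the Iwasawa splitting is only locally unique, so $F$ and thus $y_\lambda$ could a priori pick up a non-trivial $K$-valued correction under the translation $z\mapsto z+t$; Theorem \ref{thm-sym-F} handles this precisely by absorbing the ambiguity into a $V_+$ factor, which in the present translation-invariant situation can be chosen to be the identity. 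Once this is noted, all three parts follow from the explicit exponential formula $C(z,\lambda)=\exp(zD)$.
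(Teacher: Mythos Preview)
Your proposal is correct and follows essentially the same approach as the paper: both arguments rest on the explicit exponential solution $C(z,\lambda)=\exp(zD)$, the resulting translation identity $C(z+t,\lambda)=\exp(tD)\,C(z,\lambda)$, and the observation that $D\in\Lambda\mathfrak{so}(1,n+3)_\sigma$ forces $\chi(t,\lambda)=\exp(tD)$ to lie in the real loop group, after which the symmetry machinery of Section~4 (Theorems~\ref{thm-sym-F} and~\ref{th-sym-y}) applies. Your write-up is in fact slightly more explicit than the paper's sketch, particularly in flagging the Iwasawa non-uniqueness issue and in spelling out how the closing condition in~(3) is obtained by specialization.
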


Note that the continuous family of symmetries contains, in particular,
also symmetries of infinite order.
\end{example}

\begin{remark}
The natural generalization of the usual use of notation  would call the examples  above ``equivariant cylinders''.
It would be very interesting to determine, similar to many other surface classes, all equivariant tori.

\end{remark}


\section{Willmore immersions from surfaces with non-trivial fundamental group}

\subsection{Symmetries induced by the fundamental group of a Riemann surface}

So far we have discussed how to construct Willmore surfaces, defined on a simply-connected domain, which admit some symmetry.
This symmetry generally shows up in every element of the associated family.

A particularly interesting case is given by the groups of symmetries induced by the fundamental group of some Riemann surface.

Let $M$ be a Riemann surface, different from $S^2$, and let $\pi_1(M)$ denote the fundamental group of $M$. Since we exclude in this subsection the case $M = S^2$ we
know that the universal cover $\D = \tilde{M}$ of $M$ is a contractible open subset of $\C$ and $\pi_1(M)$ acts on $\D$ by Moebius transformations.

Then $\Gamma = \{ (\gamma, e), \gamma \in \pi_1(M)\},$ where $e$ is the identity operation on $S^{n+2},$ is for any Willmore surface $y:M \rightarrow G/K$ a group of symmetries for $\tilde{y}: \tilde{M} \rightarrow G/K$ as well as for the corresponding (conformally) harmonic Gauss map $\tilde{f}$.

As a consequence, $\Gamma$ is a group of symmetries for each member
$\tilde{y}_{\lambda}$ and $\tilde{f}_{\lambda}$ of the corresponding associated families. In particular, for every $(\gamma,e) \in \Gamma$ there exists some $\chi(\gamma,\lambda)$ such that

\begin{equation}
\gamma^* \tilde{F}(z,\bar{z},\lambda ) = \chi(\gamma,\lambda) \tilde{F}(z,\bar{z},\lambda )k(z,\bar{z}),
\end{equation}
and
\begin{equation}
\gamma^* \tilde{f}(z,\bar{z},\lambda ) = \chi(\gamma,\lambda) \tilde{f}(z,\bar{z},\lambda ),
\end{equation}
and also
\begin{equation}
\gamma^* \tilde{y}(z,\bar{z},\lambda ) = \chi(\gamma,\lambda) \tilde{y}(z,\bar{z},\lambda ).
\end{equation}
Of course, also the corresponding formula for the transformation behavior of the normalized potential of $\tilde{f}$ holds as well.
In particular, for general $\lambda \in S^1,$ all transformation formulas are as in the general case. However, for $\lambda = 1$ the formulas simplify, since in this case $\chi(\gamma, \lambda = 1) = I$ for every $\gamma \in \pi_1(M)$.
In this case $\tilde{y}$ and $\tilde{f}$ are invariant under the action of $\pi_1(M)$ and $\tilde{F}$ basically also is.

It is natural to ask whether for every Riemann surface $M$ and every harmonic map from $M$ to $G/K$ there does exist some potential $\tilde{\eta} $ on
$\D$ which generates the given harmonic maps and is invariant under the action of $\pi_1 (M)$.

It turns out that this is indeed the case (including the case $M = S^{2}$).
\vspace{2mm}

\begin{theorem}
Let $M$ be a Riemann surface
Let $\tilde{f}:\D = \tilde{ M}\rightarrow G/K = SO^+(1,n+3)/ SO^+(1,3)\times SO(n)$ denote  a harmonic  map. Then

(1) If $M$ is non-compact, then there exists a holomorphic potential $\eta$  on $\D$ generating $\tilde{f}$, whence also generating $f$, which is invariant under the action of $\pi_1(M)$, i.e. $\gamma^* \eta = \eta$ holds for all $\gamma \in \pi_1(M)$.

(2) If $M$ is compact, then there exists a meromorphic potential $\eta$  on $\D$ generating $\tilde{f}$, whence also generating $f$, which is invariant under the action of $\pi_1(M)$, i.e. $\gamma^* \eta = \eta$ holds for all $\gamma \in \pi_1(M)$.
\end{theorem}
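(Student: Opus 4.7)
The non-compact case (1) follows almost verbatim from \cite{Do-Ha5}. Briefly: one starts from a holomorphic potential $\eta_0$ on the Stein surface $\tilde M$ for $\tilde f$, extracts the cocycle $V_+(\gamma, z, \lambda) \in \Lambda^+ G^\C_\sigma$ from Theorem \ref{thm-symF_-} (so that $\gamma^*\eta_0 = V_+(\gamma)^{-1} \eta_0 V_+(\gamma) + V_+(\gamma)^{-1} dV_+(\gamma)$ with the natural cocycle composition rule in $\gamma$), and solves the nonabelian coboundary equation $W(\gamma.z) = V_+(\gamma, z)^{-1} W(z)$ by a holomorphic gauge $W: \tilde M \to \Lambda^+ G^\C_\sigma$; the transformed potential $\eta = W^{-1}\eta_0 W + W^{-1} dW$ is then holomorphic and $\pi_1(M)$-invariant. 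Existence of $W$ is obtained by Behnke--Stein / Runge approximation on the Stein surface $\tilde M$, implemented recursively in the Fourier variable $\lambda$ after filtering $\Lambda^+ G^\C_\sigma$ by order of vanishing at $\lambda = 0$.

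For the compact case (2), the sphere $M = S^2$ is immediate from Theorem \ref{th-potential-sphere}, so assume $M$ has positive genus; then $\tilde M$ is $\C$ or the upper half-plane, hence still non-compact and Stein. The plan is to reduce to (1) by puncturing. Pick $p_0 \in M$ and set $M' = M \setminus \{p_0\}$. Apply (1) to the restricted harmonic map $f|_{M'}$ to obtain a $\pi_1(M')$-invariant holomorphic potential on $\widetilde{M'}$, which pushes down to a holomorphic potential $\bar\eta$ on $M'$. Its pullback to $\tilde M \setminus \pi^{-1}(p_0)$ is $\pi_1(M)$-invariant, since the kernel of $\pi_1(M') \to \pi_1(M)$, generated by the loop around $p_0$, acts trivially on a single-valued form defined on a punctured neighborhood of $p_0$. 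The remaining task is to arrange that this pullback extends meromorphically across $\pi^{-1}(p_0)$.

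The heart of the argument, and the main obstacle, is to control the behavior of $\bar\eta$ at $p_0$ so that it is at worst polar there rather than essentially singular. I would refine the construction of (1) so that both the initial holomorphic potential $\eta_0$ and the gauge $W$ have prescribed meromorphic behavior at the preimages of $p_0$: one selects $\eta_0$ to be meromorphic near $\pi^{-1}(p_0)$, which is possible because the extended frame $F$ of $f$ is real-analytic on all of $\tilde M$ so the Birkhoff factorization $F = F_- F_+$ extends meromorphically across $\pi^{-1}(p_0)$, and then solves the coboundary equation for $W$ recursively in the Fourier modes at $\lambda = 0$, preserving meromorphicity at each stage. Each recursive step is a Cousin-II / Mittag--Leffler problem on the compact surface $M$: the obstruction lies in $H^1(M, \mathcal O(E))$ for a holomorphic vector bundle $E$ built from the lower-order Fourier coefficients, and vanishes after twisting by $\mathcal O(k p_0)$ for sufficiently large $k$ by Serre duality, yielding a meromorphic solution with pole confined to $p_0$. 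Uniform control of the pole orders across Fourier modes, needed for convergence in $\Lambda^+ G^\C_\sigma$, is provided by the standard convergence estimates for the Birkhoff factorization.
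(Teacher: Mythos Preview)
Your treatment of (1) matches the paper's, which simply cites \cite{Do-Ha5}.

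For (2), however, your route is genuinely different from the paper's and has a real gap. The paper does \emph{not} puncture and reduce to the non-compact case. Instead it works directly on compact $M$ of positive genus, following the scheme of Theorem~31.2 in Forster. From the crossed homomorphism $\tilde{\mathcal K}_+(g,z,\lambda)$ it builds a $\Lambda^+ GL$-valued cocycle $g_{ij}$ on $M$, and then invokes a theorem of R\"ohrl on holomorphic fiber bundles over Riemann surfaces (Proposition~3.12 of \cite{Rohrl}) to write $g_{ij} = p_i^{-1} p_j$ with $p_j$ \emph{meromorphic in $z$ and holomorphic in $\lambda \in \C$ simultaneously}. This single external input replaces the entire recursive-in-$\lambda$ construction you sketch, and in particular the convergence issue never arises. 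The remaining work in the paper (its Steps~2 and~3) is purely algebraic: pushing the resulting $\tilde h_+$ from $\Lambda^+ SL(n,\C)$ into $\Lambda^+ SO(1+m,\C)$ via an LDU factorisation of $\tilde\Phi\tilde\Phi^t$, and then into the $\sigma$-twisted subgroup via a parabolic decomposition from \cite{Do-Gr-Sz}.

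The gap in your approach is the last paragraph. Puncturing and applying (1) on $M' = M \setminus \{p_0\}$ gives a holomorphic potential on $M'$ with \emph{no} a~priori control at $p_0$; the construction in \cite{Do-Ha5} uses Runge approximation on the Stein surface $M'$ and will in general produce an essential singularity there. Your proposed fix---solving the coboundary equation for $W$ level by level in the $\lambda$-filtration while allowing a pole at $p_0$ at each stage---runs into exactly the difficulty R\"ohrl's theorem is designed to circumvent: the twist $\mathcal O(kp_0)$ needed to kill the $H^1$ obstruction at Fourier level $k$ may grow unboundedly with $k$, so that the formal series $\sum_k W_k \lambda^k$ fails to converge in $\Lambda^+ G^\C_\sigma$ on any punctured neighbourhood of $p_0$. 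Your appeal to ``standard convergence estimates for the Birkhoff factorisation'' does not help: those estimates bound the factors of a \emph{given} loop, whereas here you are assembling the loop from its Fourier modes and must independently bound their pole orders. Absent such a bound---and none is visible in your sketch---the argument does not close.
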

\begin{proof}
The proof of (1) is as in \cite{Do-Ha5} and the proof of (2) is in the next section.
\end{proof}

Applying this result to the conformal Gauss map of some Willmore immersion we obtain

\begin{corollary}
Let $M$ be a Riemann surface,  $y: M \rightarrow S^{n+2}$ a Willmore immersion into $S^{n+2}.$ Then

(1) If $M$ is non-compact, then there exists a holomorphic potential $\eta$  on $\D$ which is invariant under the action of $\pi_1(M)$ and generates $y$.

(2) If $M$ is compact, then there exists a meromorphic potential $\eta$  on $\D$  which is invariant under the action of $\pi_1(M)$ and generates $y$.
\end{corollary}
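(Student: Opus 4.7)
The plan is to reduce the corollary directly to the preceding theorem by passing to the conformal Gauss map of $y$. Since $y: M \to S^{n+2}$ is a Willmore immersion, its conformal Gauss map $f: M \to G/K$, with $G/K = SO^+(1,n+3)/SO^+(1,3)\times SO(n)$, is conformally harmonic. Lifting to the universal cover yields a harmonic map $\tilde{f}: \tilde{M} \to G/K$ that is tautologically invariant under $\pi_1(M)$. Applying the preceding theorem to $\tilde{f}$ then produces a potential $\eta$ on $\tilde{M}$ -- holomorphic if $M$ is non-compact, meromorphic if $M$ is compact -- satisfying $\gamma^*\eta = \eta$ for all $\gamma \in \pi_1(M)$ and generating $\tilde{f}$.

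To see that this same $\eta$ in fact generates $y$ itself (and not merely its conformal Gauss map), I would run the loop group reconstruction of Theorem \ref{thm-DPW} on $\eta$: solve $dF_- = F_-\eta$ with $F_-(z_0,\lambda)=I$, apply the Iwasawa splitting $F_- = \tilde{F}\tilde{F}_+^{-1}$, and read off a canonical homogeneous lift $\tilde{Y}$ of the resulting Willmore surface directly from $\tilde{F}$ using Proposition 2.2 of \cite{DoWa1}. At $\lambda=1$ this yields a Willmore surface on $\tilde{M}$ whose conformal Gauss map agrees with $\tilde{f}$, and by Theorem \ref{th-sym-y} there are at most two such surfaces -- $\tilde{y}$ and its dual $\hat{\tilde{y}}$. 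In the non-S-Willmore case there is nothing to choose; in the S-Willmore case one pins down $\tilde{y}$ (versus $\hat{\tilde{y}}$) by a suitable choice of base point and of the initial normalization of $\tilde{F}(z_0,\lambda)$. Descent is then automatic: since $\gamma^*\eta=\eta$ and the DPW reconstruction with fixed base-point normalization is $\pi_1(M)$-equivariant, the monodromy $\chi(\gamma,\lambda=1)$ is trivial and the reconstructed $\tilde{y}$ descends to the original $y$ on $M$.

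The main obstacle, as this outline shows, is not in the corollary itself but is entirely contained in the preceding theorem, specifically in producing an invariant \emph{meromorphic} potential in the compact case. The non-compact case follows from the cocycle construction of \cite{Do-Ha5}, while the compact case is precisely the new content carried out in Section 6. Once the theorem is in hand, the passage from an invariant potential for the harmonic Gauss map to an invariant potential for the Willmore surface itself is, as sketched above, a routine consequence of the standard loop group reconstruction together with the near-uniqueness of the Willmore surface given its conformal Gauss map.
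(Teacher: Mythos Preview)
Your proposal is correct and follows exactly the paper's approach: the corollary is stated immediately after the theorem with the single sentence ``Applying this result to the conformal Gauss map of some Willmore immersion we obtain,'' so the paper treats the passage from harmonic conformal Gauss map to Willmore surface as automatic. Your additional discussion of the reconstruction and the $y$ versus $\hat{y}$ ambiguity is more careful than the paper itself, which simply identifies ``generating $y$'' with ``generating its conformal Gauss map'' in this context.
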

\subsection{Examples}

\begin{example}
It is easy to see that if we set
\begin{equation}\label{eq-meromorphic functions-r3}
df_1=d\mathbf{f},\ df_2=df_3=\mathbf{h}^2d\mathbf{f},\ df_4=\mathbf{h}d\mathbf{f},\end{equation}  the condition
 $f_1'f_4'+f_2'f_3'=0$ will be satisfied automatically. By the main theorem of Kichoon Yang \cite{Yang}, let $\mathbf{f}$ be any nonconstant meromorphic function on a  compact Riemann surface $M$ of positive genus,
 there exists a non-zero meromorphic function $\mathbf{h}$ such that the $1-$forms in \eqref{eq-meromorphic functions-r3} have no residues and no periods on $M\backslash\Sigma$, where $\Sigma$ are the poles of $\mathbf{f}$ and $\mathbf{h}$. As a consequence, $f_1,\ f_2,\ f_3,\ f_4$ are meromorphic functions on $M$, and we obtain a (branched) Willmore surface in $S^4$ by substituting these functions into \eqref{eq-willmore in s4-y-1}, which is conformally equivalent to a minimal surface in $S^4$.

 Moreover, if we set
\begin{equation}\label{eq-meromorphic functions-r3-deform}
df_1=d\mathbf{f},\ df_2=\frac{1+t_0^2}{2t_0}\mathbf{h}^2d\mathbf{f},\ df_3=\frac{i(1-t_0^2)}{2t_0}\mathbf{h}^2d\mathbf{f},\ df_4=\mathbf{g}d\mathbf{f},\end{equation}
for some complex function $t_0\in \C^*$, then the resulting (branched) Willmore surface is globally defined on $M$ and it is non-minimal in any space form when
$\frac{1+t_0^2}{2t_0} $ and $\frac{i(1-t_0^2)}{2t_0}$ are real linearly independent. Note that in \cite{Mon}, such surfaces are called the twistor deformation of the original minimal surfaces (See Corollary 8 and the remarks therein of \cite{Mon}).
\end{example}


\section{Existence of invariant potentials}

An essential tool in the loop group method \cite{DPW} is the use of
potentials. Since there is a gauge freedom in the choice of potentials for a given surface it is not surprising that different types of potentials are particularly useful for different purposes. For example, for the construction of surfaces with certain symmetries, it is particularly useful to start from potentials which already reflect the desired symmetry. Similarly, for the construction of conformally harmonic maps from $\mathcal{F} : M\rightarrow G/K$ is it useful to start from potentials on the universal cover $\tilde{M}$ of $M$ which already come from a differential one-form on M. Thus one would like to start from a potential which is invariant under the action of the fundamental group of $M$
on $\tilde{M}$.

If $M$ is a non-compact Riemann surface it was shown in \cite{Do-Ha2}
that every CMC surface in $\R^3$ can be obtained from a holomorphic potential on  $\tilde{M}$ which is invariant under the action of the fundamental group of $M$ on  $\tilde{M}$.

It has been conjectured for some time that the analogous result holds for compact Riemann surfaces $M$ if one permits the potentials to be meromorphic.

It is the goal of this appendix to prove this conjecture for harmonic maps
$\mathcal{F}:M\rightarrow G/K$, where $G = SO^+(1,m)\subset SL(n,\C)$ for some $n$. Since in this case $G$ is a matrix Lie group, we use
the usual $I$ to denote the identity of $G$ in this section.

It is natural (and necessary for our proof) to distinguish two cases.

\subsection{(A) Harmonic maps $\mathcal{F} : S^2 \rightarrow G/K$.}

In \cite{DoWa1}, we have given a proof for the existence of the normalized potential, whence for $S^2$  this potential is already invariant.

\subsection{(B) Harmonic maps  $\mathcal{F} : M \rightarrow G/K$ with $M$ compact and of positive genus $g$ and $G^\C = SO^+(1+m,\C), m \geq 3 $.}

\subsubsection{The basic setting.}

Let $\mathcal{F}: M \rightarrow G/K$ be a harmonic map and
$\tilde{\mathcal{F}}:\tilde{M}\rightarrow G/K$ its lift to the universal cover $\tilde{M}$ of $M$.

Let $F$ denote (see \cite{DPW} or Section 2 ) a globally defined extended frame  $F:\tilde{M}\rightarrow G$ for $\tilde{\mathcal{F}}$. We will also assume that $F(z,\bar{z},\lambda)$ attains the value $I$ at some fixed base point $z_0$.
So we obtain (by the construction of \cite{Do-Ha2})

\begin{equation} \label{framesym}
F(g.z)=F(z)\mathcal{K}(g,z),\ \hspace{2mm}\mbox{ for all } \hspace{2mm} g\in\pi_1(M)\subset Aut(\tilde{M}).
\end{equation}
Therefore $\mathcal{K}(g,z)$ is a ``crossed homomorphism" with values in $G$, i.e. we have
\begin{equation} \label{crossedhom}
 \mathcal{K}(gh,z)=\mathcal{K}(h,z)\cdot \mathcal{K}(g,h.z).
\end{equation}
Introducing the loop $\lambda$ as usual, we obtain (see e.g. \cite{Do-Ha2})
\begin{equation} \label{extframesym}
F(g.z,\lambda)=\chi(g,\lambda)F(z,\lambda)\mathcal{K}(g,z)
\end{equation}
and since $\mathcal{K}$ is a crossed homomorphism we infer
\begin{equation}
\chi:\pi_1(M)\rightarrow \Lambda G_{\sigma} \hspace{2mm}\mbox{ is a homomorphism}.
\end{equation}
Note that the same crossed homomorphism $\mathcal{K}$ occurs in (\ref{framesym})
and in (\ref{extframesym}) and is independent of $\lambda$.

Let's now consider any  holomorphic extended frame $\tilde{C}$ and the corresponding holomorphic potential  $\tilde \eta$ for $\tilde{\mathcal{F}}$
(see Section 2 of our paper). Thus we have
\begin{equation}
d\tilde{C}=\tilde{C}\tilde{\eta},\ \ ~ \hbox{ with } \tilde{C}(z_0,\lambda)=I.
\end{equation}
Since $\tilde{C} = F W_+$ it is easy to verify that we obtain for every
$g \in \pi _1 (M)$
\begin{equation}\label{eq-tildeC}
\tilde{C}(g.z,\lambda)=\chi(g,\lambda)\tilde{C}(z,\lambda)\tilde{\mathcal{K}}_+(g,z,\lambda).
\end{equation}
Since $\chi$ is a homomorphism from $\pi _1 (M)$ to $\Lambda G_{\sigma}$
it is straightforward to verify that $\tilde{\mathcal{K}}_+ $ is a crossed homomorphism with values in  $\Lambda^+G^{\mathbb{C}}_{\sigma},$ i.e.,
\begin{equation}\label{eq-k}\tilde{\mathcal{K}}_+(gh,z,\lambda)=\tilde{\mathcal{K}}_+(h,z,\lambda)\tilde{\mathcal{K}}_+(g,h.z,\lambda).
\end{equation}

\subsection{The claim}

{\em Claim: } There exists $\tilde{h}_+:\tilde{M}\rightarrow \Lambda^+G^{\mathbb{C}}_{\sigma}$ meromorphic such that
for every $g \in \pi _1 (M)$ we have
\begin{equation} \label{claim}
\tilde{\mathcal{K}}_+(g,z,\lambda)=\tilde{h}_+(z,\lambda)\tilde{h}_+(g.z,\lambda)^{-1}.
\end{equation}

Assume for the moment this claim has been proven. Then we set
$$C (z,\lambda) = \tilde{C} (z,\lambda) \tilde{h}_+(z,\lambda)$$
and obtain $$ C(g.z, \lambda) = \chi(g,\lambda) C(z,\lambda), ~~ \hbox{ for all }~~ g\in
\pi_1(M).$$
This implies in particular that the Maurer-Cartan form $\eta$  of $C$ is invariant under the action of $\pi_1 (M)$. Thus $\eta$ is the desired potential.\\

{\bf Outline of proof of Claim:}\\

The basic idea is to adjust the proof of Theorem 31.2 of \cite{Forster} (also see Exercise 31.1 of \cite{Forster})

Since the proof in \cite{Forster} does not take into account values in $SO^+(1+m,\C)$,
nor loop groups nor even twisted loop groups, we will break down the proof into several steps:\\

{\bf Step 1: } Show that there exist $\tilde{h}_+ \in \Lambda SL(1+m,\C)$
satisfying (\ref{claim}).\\
\vspace{2mm}

{ \bf Step 2:} Show that for $\tilde{\mathcal{K}}_+(g,z,\lambda) \in \Lambda^+SO^+(1+m,\C)$ also
$\tilde{h}_+$ can be assumed to be in  $\Lambda^+SO^+(1+m,\C)$.\\
\vspace{2mm}

{\bf Step 3:} Show that for $\tilde{\mathcal{K}}_+(g,z,\lambda) \in \Lambda^+SO^+(1+m,\C)_{\sigma}$ also
$\tilde{h}_+$ can be assumed to be in  $\Lambda^+SO^+(1+m,\C)_{\sigma}.$\\

\subsection{Proof of Step 1}

We will follow the proof of theorem 31.2 of \cite{Forster}, but we will adjust at several  places to incorporate our setting.

 (1) We start by following  (a) of \cite{Forster}, but since our action of $\pi_1 (M)$ is slightly different from the one in \cite{Forster} we set
\begin{equation}\tilde{\Psi}_i(z)=\tilde{\mathcal{K}}_+(\eta_i(z)^{-1},z, \lambda)^{-1}.
\end{equation}
This expression is holomorphic and we have $\tilde{\Psi}_i\in\Lambda^+SO^+(1,m,\C)_{\sigma}$.
For the definition of $\eta_i(z)$ we refer to p. 215, (28.3), and p. 232, (31.3)  of \cite{Forster}.

(2) With the definition of (1)  it is easy to verify the sequence of equalities:
\begin{equation*}
\begin{split}\tilde{\Psi}_i(g.z)&=\tilde{\mathcal{K}}_+(\eta_i(g.z)^{-1},g.z, \lambda)^{-1}\\
&=\tilde{\mathcal{K}}_+(\eta_i(z)^{-1}g^{-1},g.z,\lambda)^{-1}\\
&=\left(\tilde{\mathcal{K}}_+(g^{-1},g.z,\lambda)\tilde{\mathcal{K}}_+(\eta_i(z)^{-1},z,\lambda)\right)^{-1}\\
&=\tilde{\Psi}_i(z)\tilde{\mathcal{K}}_+(g^{-1},g.z,\lambda)^{-1}\\
&=\tilde{\Psi}_i(z)\tilde{\mathcal{K}}_+(g,z,\lambda),\\
\end{split}\end{equation*}
where the third equality comes from the identity \eqref{eq-k} and the last equality is because of the fact that
$$I =\tilde{\mathcal{K}}_+(e,z,\lambda)=\tilde{\mathcal{K}}_+(g,z,\lambda)\tilde{\mathcal{K}}_+(g^{-1},g.z,\lambda).$$
Thus (like (b) of \cite{Forster}) $\tilde\Psi_i$ satisfies the desired behavior on $\mathcal{Y}_i=p^{-1}(\mathcal{U}_i)$. See \cite{Forster} for more details on the notation.

(3) As a consequence of (2) we see that the matrices $$\tilde{g}_{ij}=\tilde\Psi_i\tilde\Psi_j^{-1}\in \Lambda^+GL(n,\mathcal{O}(\mathcal{Y}_i\cap\mathcal{Y}_j))$$ are invariant under the action  of $\pi_1(M)$.
Thus these matrix functions  descend to $M$: $$g_{ij}\in \Lambda^+GL(n,\mathcal{O}(\mathcal{U}_i\cap\mathcal{U}_j)).$$
At this point we would like to write the cocycle $\lbrace g_{ij}\rbrace$
as a boundary. But opposite to \cite{Forster} our surface is compact and our functions also depend holomorphically on a parameter $\lambda \in \C$.
But Proposition 3.12 of H. R\"{o}hrl \cite{Rohrl} shows
\begin{equation}
g_{ij}=p_i^{-1}p_j,\ \ ~p_j\in GL(n,\mathcal{O}(\mathcal{U}_j)),
\end{equation}
with maps $p_j$ which are meromorphic in $z$ and holomorphic in
$\lambda \in \C$, in particular, $p_j\in\Lambda^+GL(m,\C)$ for all $j$.

Since all $g_{ij}$ have determinant $1$, the functions $p_j \cdot diag(1,...1,det(p_j)^{-1})$ are also a boundary for the cocycle
$\lbrace g_{ij}\rbrace$. Note that $\det(p_j)$ and $\det(p_j^{-1})$ only depend on $\lambda^k$, $k\geq0$, in their Fourier expansions.

We can thus assume
\begin{equation}
g_{ij}=p_i^{-1}p_j,\ \ ~p_j\in \Lambda^+SL(n,\mathcal{O}(\mathcal{U}_j),
\end{equation}
with maps $p_j$ which are meromorphic in $z$ and even holomorphic in
$\lambda \in \C$.

 Set $\tilde{p}_j=p_j\circ \pi$ with $\pi:\tilde{M}\rightarrow M$ the natural projection.
Then $$\tilde\Phi_j=\tilde{p}_j\tilde{\Psi}_j\in \Lambda^+SL(n,\C)$$ satisfies
\begin{equation}\tilde{\Phi}_i(g.z)=\tilde{p}_i(g.z)\tilde{\Psi}_i(g.z)=
\tilde{p}_i(z)\tilde{\Psi}_i(z)\tilde{\mathcal{K}}_+(g,z,\lambda)
=\tilde\Phi_i \tilde{\mathcal{K}}_+(g,z,\lambda) ,
\end{equation}
and
\begin{equation}
\tilde{\Phi}_i^{-1}\tilde{\Phi}_j=\tilde{\Psi}_i^{-1}\tilde{p}_i^{-1}\tilde{p}_j \tilde{\Psi}_j
=\tilde{\Psi}_i^{-1}\tilde{g}_{ij}\tilde{\Psi}_j=I \hbox{ on }
\mathcal{Y}_i\cap\mathcal{Y}_j,
\end{equation}
in view of $\tilde{g}_{ij}=\tilde\Psi_i\tilde{\Psi}_j^{-1}$ since $\tilde{g}_{ij}$ and $\tilde{p}_i^{-1}\tilde{p}_j$ are invariant under $\pi_1(M)$ and coincide in $\mathcal{U}_i\cap\mathcal{U}_{j}$, whence on $\mathcal{Y}_i\cap\mathcal{Y}_j$.
Therefore there exists a meromorphic map $\tilde{\Phi}:\tilde{M}\rightarrow \Lambda^+SL(n,\mathcal{O}(\tilde{M}))$ with
\begin{equation}\label{eq-89}\tilde{\Phi}(g.z)=\tilde{\Phi}(z)\tilde{\mathcal{K}}_+(g,z,\lambda).
\end{equation}
Hence $h_+(z)=\tilde\Phi(z)^{-1}$ is a matrix function as desired.

Before we continue the proof of the claim we would like to point out an a priori simplification. Considering $\tilde{\mathcal{K}}(g,z,\lambda)\in \Lambda^+SO(1+n,\C)_{\sigma}$ as in \eqref{eq-tildeC}, it is clear that with
$\tilde{\mathcal{K}}(g,z,\lambda)$ also $\tilde{\mathcal{K}}(g,z,\lambda=0)=\tilde{\mathcal{K}}_0(g,z)$ is a crossed homomorphism with values in $K^{\C}$.

Next we consider the $\lambda-$independent term $\delta_0$ of $\tilde{\Phi}.$ We point out that $\delta_0$ does not depend on any $g\in\pi_1(M)$. However, equation \eqref{eq-89}, spelled out for $\tilde{\Phi}$, yields for the  $\lambda-$independent term $\delta_0(z)=\tilde{\Phi}(z,\lambda=0)$ the relation
\begin{equation}\delta_0(g.z)=\delta_0(z)\tilde{\mathcal{K}}_+(g,z,\lambda=0).
\end{equation}
Therefore the $\lambda-$independent term $\delta_0(z)=\tilde{\mathcal{K}}(g,z,\lambda=0)$ is a crossed homomorphism which has the form \eqref{claim}. Therefore, applying the trick pointed out in the beginning of Section 10.3, we observe that
\begin{equation}C^{\sharp}(z,\lambda)=\tilde{C}(z,\lambda)\delta_0(z)^{-1}
\end{equation}
satisfies
\begin{equation}C^{\sharp}(g.z,\lambda)=\chi(g,\lambda)C^{\sharp}(z,\lambda)\tilde{\mathcal{K}}^{\sharp}_{+}(g,z,\lambda)
\end{equation}
where
\begin{equation}\tilde{\mathcal{K}}^{\sharp}_{+}(g,z,\lambda)=I+\mathcal{O}(\lambda).
\end{equation}
As a consequence, from now on, we can always assume w.l.g. $\tilde{\mathcal{K}}_{+}(g,z,\lambda=0)=I$ and $\tilde{\Phi}(z,\lambda=0)$ is invariant under $\pi_1(M)$.


\subsection{Proof of Step 2}

We want to replace the $\tilde\Phi$ above by some
$\tilde{\Phi}_0\in  \Lambda^+ SO(n,\mathbb{C})$.

But solving
$$\tilde{\Phi}(g.z)^{t,-1}=\tilde{\Phi}(z)^{t,-1}\tilde{\mathcal{K}}_+(g,z,\lambda)$$
and
$$\tilde{\Phi}(g.z)=\tilde{\Phi}(z)\tilde{\mathcal{K}}_+(g,z,\lambda)$$
(or, $\tilde{\mathcal{K}}_+(g,z,\lambda)$) implies $$\tilde{\Phi}(z)^{t}\tilde{\Phi}(g.z)^{t,-1}=\tilde{\Phi}(z)^{-1}\tilde{\Phi}(g.z).$$
Therefore we obtain
$$\tilde{\Phi}(z)\tilde{\Phi}(z)^{t}=\tilde{\Phi}(g.z)\tilde{\Phi}(g.z)^{t}.$$
Thus $$\tilde{A}(z)=\tilde{\Phi}(z)\tilde{\Phi}(z)^{t}\in \Lambda^+SL(n,\C)$$
is meromorphic on $\tilde{M}$, symmetric and invariant under $\pi_1(M)$. Moreover, we can assume $\tilde{A}(z_0,\lambda)=I.$ Applying the Gauss algorithm we can write
$$\tilde{A}=\tilde{L}P\tilde{\mathcal{U}}$$
with $\tilde{L}-I$ strictly lower triangular, $P$ a permutation matrix and $\tilde{\mathcal{U}}$ upper triangular and we can assume $\det\tilde{L}=\det P=\det\tilde{\mathcal{U}}=1$. Moreover, we can assume that $P\tilde{\mathcal{U}}P^{-1}$ is still upper triangular and $\tilde{L}$ and $\tilde{\mathcal{U}}$ are meromorphic and invariant under $\pi_1(M)$ with values in $\Lambda^+ SL(n,\mathbb{C})$.

It is known from linear algebra that the condition ``$P\tilde{\mathcal{U}}P^{-1}$ stays upper triangular" makes the representation $\tilde{A}=\tilde{L}P\tilde{\mathcal{U}}$ unique. Therefore $$I=\tilde{A}(z_0)=\tilde{L}(z_0)P\tilde{\mathcal{U}}(z_0)$$ shows $\tilde{L}(z_0)=I,$ $\tilde{\mathcal{U}}(z_0)=I$ and $P=I$.
Moreover, $\tilde{L}$ and $\tilde{U}$ are invariant under $\pi_1 (M)$.

Replacing $\tilde\Phi$ by $$\tilde\Phi_1=\tilde{L}^{-1}\tilde\Phi\in \Lambda^+SL(n,\C)$$ we obtain a meromorphic matrix function which has still the right transformation behavior.
Since $$\tilde{A}_1=\tilde\Phi_1\tilde\Phi_1^{t}=\tilde{L}^{-1}\tilde{\Phi}\tilde{\Phi}^{t}(\tilde{L}^{t})^{-1}=\tilde{\mathcal{U}}(\tilde{L}^t)^{-1},$$ and $\tilde{A}=\tilde{A}^t$, we obtain $\tilde{\mathcal{U}}=D\tilde{L}^t$ with $D$ diagonal and $\tilde\Phi_1$ satisfies $\tilde\Phi_1\tilde\Phi_1^t=D.$ Since $\tilde\Phi_1\in\Lambda^+SL(n,\mathbb{C})$,
 the $\lambda-$independent and $\pi_1(M)$ invariant term $\delta_0$ of $\tilde\Phi_1$ satisfies $\delta_0^2=D_0$. Hence we can assume,
  by considering $\tilde{\Phi}_2=\delta_0^{-1}\tilde{\Phi}_1$, w.l.g $D_0=I$.

 Expanding now
$$D=I+\lambda D_1+\lambda^2 D_2+\cdots,$$
we need to solve $R^2=D$ with some meromorphic $\pi_1(M)$ invariant diagonal matrix $R$, $R(z_0)=I$ and $\det R=1$. Then $\tilde\Phi_0=R^{-1}\tilde\Phi_2$ is the desired solution.

To show the existence of $R$, we need to consider a scalar function of the form  $D = 1+\cdots$. One can write such a function in the form $\exp(a_+(\lambda) )$: Writing $D = 1 + v$, $v = v_1 \lambda + \cdots$, we consider the binomial series
$R = (1 + v)^1/2$  which converges absolutely on the closed unit disk.
This $R$ is in the Wiener algebra and solves, of course, $R^2 = d$.


\subsection{Proof of Step 3}

We need  to show that one can even assume w.l.g. that $\tilde\Phi_0$ is $\sigma-$twisted.

To this end we consider the realization of $G^{\mathbb{C}}$ in the form $G(n,\mathbb{C})$ as defined in (4.1) of \cite{Wang-1}. Then $\sigma=Ad D_0$ with $D_0=\hbox{diag}(-1,-1,I,-1,-1)$ and $\sigma$ leaves invariant the Cartan algebra of diagonal matrices and the Borel (and dual Borel) algebra consisting of upper triangular (lower triangular ) matrices. The involution $\sigma$ leaves the positive and the negative root spaces invariant. Let $\mathcal{X}$ denote the set of negative roots which are contained in $K^{\mathbb{C}}=Fix(\sigma)$. The set $\mathcal{X}$ defines a parabolic subgroup and we obtain from Theorem 3.5.1 of \cite{Do-Gr-Sz}
\begin{equation}\Lambda G(n,\mathbb{C})=\bigcup_{w\in W, \ w \hbox{ is } \mathcal{X}\hbox{-reduced}}B_-wP
\end{equation}
where
$$B_-=\{g\in \Lambda^-G(n,\mathbb{C})|\ g(\lambda=\infty)\in \hbox{dual Borel group}\},\hbox{ and } P=\Lambda K^{\mathbb{C}}\cdot \Lambda^+G(n,\mathbb{C}).$$
Moreover, the restriction ``$\mathcal{X}$-reduced" is equivalent with ``$w$ is  a chosen representation of the set of double cosets". More importantly, for a given coset $B_-wP$ a representation $g=b_-wp$ can w.l.g be made unique, if we require $w^{-1}b_-w\in B_-$. These special conditions we will assume from now on. In particular, the union $(g_0)$ is disjoint. Since $\tilde{\mathcal{K}}_+$ is fixed by $(\hat{\sigma}g)(\lambda)=\sigma(g(-\lambda))$, we obtain
$$\tilde\Phi_0(g.z,\lambda)=\tilde\Phi_0(z,\lambda)\tilde{\mathcal{K}}_+(g,z,\lambda),$$
$$\sigma(\tilde\Phi_0)(g.z,\lambda)=\hat\sigma(\tilde\Phi_0)(z,\lambda)\tilde{\mathcal{K}}_+(g,z,\lambda).$$
Hence
\begin{equation}\tilde\Phi_0(z,\lambda)^{-1}\tilde\Phi_{0}(g.z,\lambda)=(\hat\sigma\tilde\Phi_0)(z,\lambda)^{-1}((\hat\sigma\tilde\Phi_0)(g.z,\lambda)),
\end{equation}
equivalently,
\begin{equation}\label{Phi_0}\tilde\Phi_0(z,\lambda)(\hat\sigma\tilde\Phi_0)(z,\lambda)^{-1}
=\tilde\Phi_{0}(g.z,\lambda)((\hat\sigma\tilde\Phi_0)(g.z,\lambda))^{-1}.
\end{equation}
Writing $\tilde\Phi_0(z,\lambda)=\tilde\Phi_{0,+}(z,\lambda) \tilde{B}_0(z)$ with $\tilde\Phi_{0,+}(z,\lambda)=I+\mathcal{O}(\lambda)$ and $\tilde{B}_0(z)\in G(n,\mathbb{C})$ independent of $\lambda$, we derive from \eqref{Phi_0}
\begin{equation}
S_0(z)=B_0(z)\sigma B_0(z)^{-1}=B_0(g.z)\sigma B_0(g.z)^{-1}.
\end{equation}
Decomposing $S_0$ as above (here we only need the classical finite dimensional theory ) we obtain
\begin{equation}S_0(z)=b_-(z)wp(z).
\end{equation}
Since this representation is unique, we obtain
$$I=S_0(z_0)=b_-(z_0)wp(z_0)$$
with $w=e$, $b_-(z_0)=I,p(z_0)=I$. Then we can write
\begin{equation}S_0(z)=q_-(z)k_0(z)q_+(z),
\end{equation}
where $q_-$, $q_+$ have leading term $I$ and $k_0\in K^{\mathbb{C}}$.
This representation is unique and $q_-$, $k_0$, $q_+$ are meromorphic on $M$ ( rather meromorphic on $\tilde{M}$ and invariant under $\pi_1(M)$).
$$S_0^{-1}=\sigma S_0=\sigma q_-k_0\sigma q_+=q_+^{-1}k_0^{-1}q_-^{-1}$$
implies $q_+=(\sigma q_-)^{-1}$, $k_0=k_0^{-1}$. As a consequence, $\tilde\Phi_3=q_-^{-1}\tilde\Phi_0\in \Lambda^+G(n,\C)$ has the same transformation behavior as $\tilde\Phi_0$ and is meromorphic on $\tilde{M}$, and invariant under $\pi_1(M)$. Moreover
\begin{equation*} \tilde\Phi_3(\hat\sigma\tilde\Phi_3)^{-1}=k_{0}\sigma k_0^{-1}=k_0^2.
\end{equation*}
Put $\tilde\Phi_4=k_0^{-1}\tilde\Phi_3$. Then  $\tilde\Phi_4:\tilde{M}\rightarrow \Lambda^+G(n,\mathbb{C})_{\sigma}$ is meromorphic and satisfies
$$\tilde\Phi_4(g.z,\lambda)=\tilde\Phi_4(z,\lambda)\tilde{\mathcal{K}}_+(g.z,\lambda).$$
As a consequence,
$$\tilde{C}_0(z,\lambda)=\tilde{C}(z,\lambda)\tilde\Phi_4(z,\lambda)^{-1}$$
satisfies (in our original setting, i.e., using $SO^+(1,2m-1,\C)$)
\begin{theorem} $\tilde{C}_0: \tilde{M}\rightarrow \Lambda SO^+(1,2m-1,\mathbb{C})_{\sigma}$ is meromorphic and satisfies
\begin{equation}\tilde{C}_0(g.z,\lambda)=\chi(g,\lambda)\tilde{C}_0(z,\lambda)\end{equation}
for all $g\in\pi_1(M)$, $z\in \tilde{M}$.
\end{theorem}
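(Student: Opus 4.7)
The plan is to synthesize the preceding Steps 1--3 and to verify the transformation rule for $\tilde{C}_0$ by a direct substitution; essentially all of the real work has been done, so what remains is bookkeeping.

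First I would collect what Steps 1--3 have produced. Starting from the holomorphic extended frame $\tilde{C}$ of $\tilde{\mathcal{F}}$ and its associated crossed homomorphism $\tilde{\mathcal{K}}_+(g,z,\lambda)$ appearing in equation \eqref{eq-tildeC}, the successive modifications $\tilde{\Phi} \to \tilde{\Phi}_1 \to \tilde{\Phi}_2 \to \tilde{\Phi}_3 \to \tilde{\Phi}_4$ yield a meromorphic map $\tilde{\Phi}_4 : \tilde{M} \rightarrow \Lambda^{+} G(n,\mathbb{C})_{\sigma}$ satisfying
\[
\tilde{\Phi}_4(g.z,\lambda) = \tilde{\Phi}_4(z,\lambda)\, \tilde{\mathcal{K}}_+(g,z,\lambda)
\]
for every $g \in \pi_1(M)$. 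This relation is preserved at every stage because each correction is made by left multiplication by a factor that is itself meromorphic on $\tilde{M}$ and $\pi_1(M)$-invariant. Under the matrix realization of $SO^+(1,2m-1,\mathbb{C})$ inside $G(n,\mathbb{C})$ used in (4.1) of \cite{Wang-1}, the target group $\Lambda^{+} G(n,\mathbb{C})_{\sigma}$ is identified with $\Lambda^{+} SO^+(1,2m-1,\mathbb{C})_{\sigma}$.

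Second, I would define $\tilde{C}_0(z,\lambda) := \tilde{C}(z,\lambda)\,\tilde{\Phi}_4(z,\lambda)^{-1}.$ Since $\tilde{C}$ is holomorphic in $z$ and $\tilde{\Phi}_4$ is meromorphic in $z$, this product is meromorphic on $\tilde{M}$, and it takes values in $\Lambda SO^+(1,2m-1,\mathbb{C})_{\sigma}$ because both factors do. The transformation rule then drops out as a one-line check: using \eqref{eq-tildeC} together with the identity for $\tilde{\Phi}_4$,
\[
\tilde{C}_0(g.z,\lambda) = \chi(g,\lambda)\tilde{C}(z,\lambda)\tilde{\mathcal{K}}_+(g,z,\lambda)\bigl[\tilde{\Phi}_4(z,\lambda)\tilde{\mathcal{K}}_+(g,z,\lambda)\bigr]^{-1} = \chi(g,\lambda)\tilde{C}_0(z,\lambda),
\]
as required.

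The only real obstacle, and the entire content of the theorem, lies in constructing $\tilde{\Phi}_4$ with the correct orthogonal and twisted structure; once Steps 1--3 are in place, the conclusion is immediate. The analytical points that still need to be verified (all already established in the preceding subsections) are the invariance-preserving property of the successive left multipliers at each of the stages $\tilde{\Phi} \to \tilde{\Phi}_1 \to \tilde{\Phi}_2 \to \tilde{\Phi}_3 \to \tilde{\Phi}_4$ and, for the realization argument, the compatibility of the involution $\sigma = \mathrm{Ad}\,D_0$ with the matrix presentation of \cite{Wang-1}.
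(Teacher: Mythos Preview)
Your proposal is correct and matches the paper's approach exactly: the paper also defines $\tilde{C}_0(z,\lambda)=\tilde{C}(z,\lambda)\tilde{\Phi}_4(z,\lambda)^{-1}$ and states the theorem as an immediate consequence of Steps 1--3, without writing out the one-line verification you supply. Your substitution argument is precisely the intended justification that the paper leaves implicit.
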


\begin{corollary} $\eta=\tilde{C}_0^{-1}d\tilde{C}_0$ is a meromorphic $1-$form on $\tilde{M}$ which is invariant under $\pi_1(M)$ and has with $\tilde{C}_0(z,\lambda)$ a meromorphic solution satisfying $\tilde{C}_0(z_0,\lambda)=I$.
\end{corollary}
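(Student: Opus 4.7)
My plan is to derive the three claimed properties of $\eta$ directly from the transformation law for $\tilde{C}_0$ established in the preceding theorem, namely $\tilde{C}_0(g.z,\lambda)=\chi(g,\lambda)\tilde{C}_0(z,\lambda)$ for all $g\in\pi_1(M)$. None of the three properties is hard; the work is simply to observe what each of them reduces to.

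First I would verify that $\eta$ is a meromorphic $(1,0)$-form on $\tilde{M}$. Since the theorem asserts that $\tilde{C}_0:\tilde{M}\to\Lambda SO^+(1,2m-1,\C)_\sigma$ is meromorphic, the matrix inverse $\tilde{C}_0^{-1}$ is meromorphic on $\tilde{M}$ away from the zero divisor of $\det\tilde{C}_0$ (which is empty in our loop group setting), and $d\tilde{C}_0$ is a matrix of meromorphic one-forms. Hence the product $\eta = \tilde{C}_0^{-1}d\tilde{C}_0$ is a meromorphic $\Lambda\mathfrak{g}^{\C}_\sigma$-valued one-form on $\tilde{M}$.

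Next I would establish $\pi_1(M)$-invariance. The key observation is that $\chi(g,\lambda)$ depends only on $g$ and on $\lambda$, and in particular is independent of $z$, so that $d\chi(g,\lambda)=0$ as a differential form on $\tilde{M}$. Pulling $\eta$ back by $g\in\pi_1(M)$ and using the theorem,
\begin{equation*}
g^{*}\eta \;=\; \bigl(\tilde{C}_0(g.z,\lambda)\bigr)^{-1} d\bigl(\tilde{C}_0(g.z,\lambda)\bigr)
\;=\; \bigl(\chi(g,\lambda)\tilde{C}_0\bigr)^{-1} d\bigl(\chi(g,\lambda)\tilde{C}_0\bigr)
\;=\; \tilde{C}_0^{-1}\,\chi^{-1}\chi\, d\tilde{C}_0 \;=\; \eta,
\end{equation*}
which gives the desired invariance. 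This is the only genuine computation; the rest is bookkeeping.

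Finally I would observe that the tautology $d\tilde{C}_0 = \tilde{C}_0\eta$ holds by the very definition of $\eta$, so $\tilde{C}_0$ is automatically a meromorphic solution of this ODE. The initial condition $\tilde{C}_0(z_0,\lambda)=I$ is the place where the construction has to be revisited: I would check step by step that the successive replacements $\tilde{C}\to C^{\sharp}\to\tilde{C}_0=\tilde{C}\tilde{\Phi}_4^{-1}$ used in Sections 10.3--10.5 all preserve the normalization at the base point, since $\delta_0(z_0)=I$ in Step~1 (by our choice $\tilde{\mathcal{K}}_+(g,z,\lambda=0)=I$), $\tilde{L}(z_0)=\tilde{\mathcal{U}}(z_0)=I$ and $\delta_0=I$ in Step~2, and $b_-(z_0)=p(z_0)=I$, $w=e$ in Step~3. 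The expected mild obstacle is exactly this: one must choose the Gauss-algorithm factors and the Birkhoff-type factors so that their values at $z_0$ are the identity, which is legitimate because each factorization in use is unique under the normalization assumptions already invoked. With that verified, the corollary follows.
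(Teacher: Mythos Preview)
Your proposal is correct and matches the paper's intent: the paper gives no proof for this corollary, treating it as an immediate consequence of the preceding theorem, and your argument supplies exactly the straightforward verification (invariance from $z$-independence of $\chi$, tautological solution, and bookkeeping of the base-point normalization through Steps~1--3) that the paper leaves implicit.
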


\begin{corollary} Every harmonic map $\mathcal{F}:M\rightarrow G/K $ can be obtained from

$a)$  a holomorphic potential, if $M$ is non-compact,

$b)$  a meromorphic potential, if $M$ is compact.
\end{corollary}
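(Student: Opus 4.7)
The plan is to reduce both parts to the existence of an invariant potential on the universal cover $\tilde{M}$ and then observe that invariance forces a descent to $M$. For the compact case $(b)$, I would invoke directly the theorem and corollary that immediately precede the statement: one already has a meromorphic lift $\tilde{C}_0:\tilde{M}\rightarrow \Lambda SO^+(1,n+3,\C)_\sigma$ satisfying $\tilde{C}_0(g.z,\lambda)=\chi(g,\lambda)\tilde{C}_0(z,\lambda)$ for every $g\in\pi_1(M)$, whose logarithmic derivative $\eta=\tilde{C}_0^{-1}d\tilde{C}_0$ is thereby a $\pi_1(M)$-invariant meromorphic $(1,0)$-form on $\tilde{M}$ with values in $\Lambda \mathfrak{g}^{\C}_\sigma$. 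Being invariant under the deck group, $\eta$ descends to a meromorphic $1$-form on $M$. To verify that $\mathcal{F}$ itself is then recovered by this descended form, I would run the DPW construction starting from $\eta$: integrating with $I$ as the initial condition at the base point returns $\tilde{C}_0$, and since $\tilde{C}_0$ differs from the original holomorphic extended frame $\tilde{C}$ of $\tilde{\mathcal{F}}$ only by a right factor in $\Lambda^+ SO^+(1,n+3,\C)_\sigma$, their $\mathcal{I}^{\mathcal{U}}$-Iwasawa projections modulo $K$ coincide. Hence both produce $\tilde{\mathcal{F}}$, which is $\pi_1(M)$-equivariant and thus descends to exactly $\mathcal{F}$.

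For the non-compact case $(a)$, I would adapt the argument of \cite{Do-Ha5} essentially verbatim. The only ingredient in the three-step compact proof specific to compactness is the appeal to R\"{o}hrl's Proposition 3.12 in Step 1, which is used to split the descended cocycle $\{g_{ij}\}$ meromorphically in $z$ while keeping the $\lambda$-dependence holomorphic. When $M$ is non-compact, the classical solvability of the second Cousin problem on non-compact Riemann surfaces, together with its loop-group-valued extension, provides a splitting already in the holomorphic category, so Step 1 returns a holomorphic $\tilde{\Phi}$; Steps 2 and 3, the reduction to $SO^+$-values via a symmetric-matrix factorization and then to $\sigma$-twisted values via a Birkhoff-type decomposition, preserve holomorphicity throughout, yielding a holomorphic $\pi_1(M)$-invariant potential whose descent to $M$ recovers $\mathcal{F}$ by the same argument as above.

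The main obstacle, already handled in the three-step construction preceding the corollary, is Step 1 itself: writing $\tilde{\mathcal{K}}_+(g,z,\lambda)$ as a coboundary $\tilde{h}_+(z,\lambda)\tilde{h}_+(g.z,\lambda)^{-1}$ with $\tilde{h}_+$ taking values in a loop group requires a cohomological splitting that is simultaneously valid for a $\lambda$-family of $SL_n$-valued cocycles on a compact Riemann surface, which is precisely the content that forces the invocation of R\"{o}hrl. Once this splitting is in hand, Steps 2 and 3 are essentially meromorphic manipulations on $\tilde{M}$, and the concluding descent is automatic from the invariance that has been arranged. Thus the corollary reduces, in both parts, to the cocycle-splitting result, followed by a routine bookkeeping check that the Iwasawa projection of $\tilde{C}_0$ reproduces the original harmonic map on $M$.
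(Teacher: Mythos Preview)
Your proposal is correct and follows the paper's own route: the corollary is stated without proof, as an immediate consequence of the preceding theorem and corollary (the invariant meromorphic $\eta=\tilde{C}_0^{-1}d\tilde{C}_0$ in the compact case) together with the reference to \cite{Do-Ha5} for the non-compact case already made in the proof of Theorem~5.1. Your elaboration---descent of the invariant potential to $M$ and recovery of $\mathcal{F}$ via Iwasawa splitting of $\tilde{C}_0$---is exactly the intended reading.
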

\begin{corollary} Every harmonic map $\mathcal{F}:M\rightarrow G/K $ can be obtained from

$a)$  a holomorphic extended frame, if $M$ is non-compact,

$b)$  a meromorphic extended frame, if $M$ is compact.
\end{corollary}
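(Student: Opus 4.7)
The plan is to observe that the statement is essentially an immediate reformulation of the preceding Theorem (and its Corollary on the $1$-form $\eta$), so no substantial new construction is needed; what remains is to verify that the object $\tilde C_0$ already produced qualifies as an extended frame in the sense of Section~2, and to indicate how to obtain the holomorphic analogue when $M$ is non-compact.

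For part (b), the preceding Theorem furnishes a meromorphic map
\[
\tilde C_0:\tilde M\longrightarrow \Lambda SO^+(1,n+3,\C)_{\sigma}
\]
with the monodromy covariance $\tilde C_0(g.z,\lambda)=\chi(g,\lambda)\tilde C_0(z,\lambda)$ for all $g\in\pi_1(M)$. By the definitions recalled in Section~2, such a map is precisely a meromorphic extended frame of the harmonic map $\mathcal F$: the Iwasawa splitting $\tilde C_0=\tilde F\tilde V_+$ on the open dense set of regular points recovers the original $\Lambda G_{\sigma}$-valued extended frame $\tilde F$ of $\tilde{\mathcal F}$, while the $\pi_1(M)$-invariance of $\eta=\tilde C_0^{-1}d\tilde C_0$ ensures that the data descend to $M$. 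Part (b) follows.

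For part (a), when $M$ is non-compact I would re-run Steps~1--3 of the preceding Theorem, but with R\"ohrl's theorem replaced by the Oka--Grauert/Behnke--Stein principle: on a non-compact Riemann surface, every holomorphic principal bundle with connected complex structure group is holomorphically trivial, so the cocycle $\{g_{ij}\}$ of Step~1 admits a \emph{holomorphic} (rather than merely meromorphic) coboundary $g_{ij}=p_i^{-1}p_j$. The subsequent Gauss-algorithm normalization (Step~2) and the $\sigma$-twisted Bruhat reduction (Step~3) manifestly preserve holomorphicity of the intermediate matrices $\tilde\Phi_1,\dots,\tilde\Phi_4$, so the resulting $\tilde C_0$ is a holomorphic extended frame on $\tilde M$ with the required $\pi_1(M)$-equivariance. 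This is the strategy of \cite{Do-Ha5}.

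The main obstacle, already overcome in the preceding Theorem, is to maintain $\sigma$-twisted $SO^+(1,n+3,\C)$-valuedness throughout: the raw output of R\"ohrl (resp.\ Oka--Grauert) is only $GL$-valued, so one must successively pass to $SL$, then to the orthogonal group via the symmetric matrix $\tilde\Phi\tilde\Phi^t$ and the Gauss algorithm, and finally enforce $\hat\sigma$-invariance via the finite-dimensional Bruhat decomposition. Once these three reductions are in place, as they are in the proof of the Theorem, the corollary on extended frames is immediate in both the compact and non-compact settings, with the only adjustment being the invocation of Oka--Grauert rather than R\"ohrl for the non-compact case.
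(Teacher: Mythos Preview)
Your reading of part~(b) is exactly what the paper intends: the Corollary carries no separate proof because $\tilde C_0$ produced in the preceding Theorem \emph{is} the meromorphic extended frame, and the non-compact case~(a) is, as in Section~5, simply deferred to \cite{Do-Ha5}.

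One caution on your sketch for~(a): the assertion that Steps~2 and~3 ``manifestly preserve holomorphicity'' is too optimistic. The Gauss decomposition $\tilde A=\tilde L P\tilde{\mathcal U}$ requires dividing by leading principal minors of $\tilde A$, so $\tilde L$ and $\tilde{\mathcal U}$ are in general only meromorphic even when $\tilde A$ is holomorphic; the same applies to the Bruhat-type factorization $S_0=q_-k_0q_+$ in Step~3. Thus replacing R\"ohrl by Oka--Grauert in Step~1 does not by itself keep the subsequent algebra pole-free. The argument in \cite{Do-Ha5} for non-compact $M$ is organized somewhat differently and does not proceed by literally re-running the compact-case Steps~2--3. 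Your conclusion for~(a) is correct, but the route you outline would need adjustment to stay holomorphic throughout; citing \cite{Do-Ha5} (as the paper does) is the honest move here.
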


\begin{corollary} Every Willmore surface $y:M\rightarrow S^n$ can be obtained from

$a)$  an invariant holomorphic potential, if $M$ is non-compact,

$b)$  an invariant meromorphic potential, if $M$ is compact.
\end{corollary}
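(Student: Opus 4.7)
The plan is to reduce this corollary to the previously established corollary about harmonic maps into $G/K$ by passing through the conformal Gauss map. Let $y:M\to S^{n+2}$ be a Willmore immersion and let $f:M\to G/K = SO^+(1,n+3)/SO^+(1,3)\times SO(n)$ denote its conformal Gauss map, which is a (conformally) harmonic map by the construction recalled in Section 2 and \cite{DoWa1}. First I would apply the corollary just proved to $f$, obtaining on the universal cover $\tilde{M}$ a holomorphic potential $\eta$ in the non-compact case, or a meromorphic potential $\eta$ in the compact case, which is invariant under the natural action of $\pi_1(M)$: $\gamma^*\eta = \eta$ for every $\gamma \in \pi_1(M)$.

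Next I would verify that $\eta$ genuinely ``generates'' $y$, not merely $f$. Solving $dC = C\eta$ with $C(z_0,\lambda)=I$ and performing the (local) Iwasawa factorization $C = F W_+$ produces an extended frame $F$ of $f$, from which the associated family $y_\lambda$ of Willmore immersions is recovered in the canonical way (formula (8) of \cite{DoWa1}). By Theorem \ref{th-sym-y}, the symmetry $\gamma^*C = \chi(\gamma,\lambda)C V_+$ induced by the invariance of $\eta$ descends, on the level of projective lifts, to $\gamma^*Y = \chi(\gamma,\lambda)Y$ for each $\gamma\in\pi_1(M)$; at $\lambda=1$ this is precisely the statement that the lift of $\tilde{y}$ agrees with $y\circ\pi$, so $y$ is recovered globally from the invariant potential $\eta$.

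The main obstacle is the S-Willmore ambiguity: when $y$ is S-Willmore with a non-degenerate dual surface $\hat{y}$, the conformal Gauss map does not distinguish between $y$ and $\hat{y}$, and Theorem \ref{th-sym-y} then allows $\gamma^*y$ to be either $[\chi Y]$ or $[\chi \hat Y]$. To handle this I would argue as follows: the two Willmore surfaces sharing the conformal Gauss map correspond to the two choices of a rank-one isotropic ``$Y$-line'' in a rank-two isotropic bundle determined by $f$; passing to $\tilde{M}$, the distinction between $Y$ and $\hat{Y}$ is a global discrete choice, so once we fix the lift of $y$ through the base point $z_0$, the invariance of $\eta$ forces the corresponding associated family $y_\lambda$ on $\tilde{M}$ to be $\pi_1(M)$-equivariant with the same monodromy $\chi$ as $f$, and hence to descend to $M$ as the original $y$.

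Finally, the non-compact case is essentially contained in the earlier argument of \cite{Do-Ha5}, so the only genuinely new input is the compact-genus-$\geq 1$ case (handled by the preceding theorem) together with the trivial case $M=S^2$ (handled by the normalized potential of Theorem \ref{th-potential-sphere}, which is already invariant because $\pi_1(S^2)$ is trivial). Collecting these cases gives the corollary.
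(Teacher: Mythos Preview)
Your approach is correct and matches the paper's (implicit) argument: the paper states this corollary without proof, relying on the preceding corollary for harmonic maps applied to the conformal Gauss map $f$ of $y$, exactly as you do. Your extra paragraph on the S-Willmore ambiguity goes beyond what the paper spells out here; the paper simply treats ``generating $y$'' as synonymous with ``generating its conformal Gauss map'' and leaves the choice between $y$ and a possible dual $\hat y$ to the frame-level projection $\pi_0(F)=[Y]$ fixed at the outset, so your resolution is consistent with the paper's conventions even if the paper does not make it explicit at this point.
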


\begin{remark} Invariant potentials will contain in general many powers of $\lambda$. Only if the corresponding Willmore
immersion is of finite uniton type only one power of $\lambda$ will occur.\\
\end{remark}

\section{Appendix: Proof of Theorem 3.1}

We consider the contractible domains $\D_1=\D_2=\D\subset\C$ and the complete Willmore immersions $y_1=y_2=y:\D\rightarrow S^{n+2}$. Moreover, let $R$ be a conformal transformation of $S^{n+2}$ satisfying $R y(\D_1)=y(\D _2)$. Our goal is to find some conformal map $\gamma:\D_1\rightarrow \D_2$ such that the diagram
\begin{equation}\label{eq-commute-D}
 \begin{CD}
\D_1@> \gamma >>\D_2\\
@V{y}VV @VV{y}V\\
S^{n+2} @> R>>S^{n+2}\
\end{CD}
\end{equation}
commutes. We choose an arbitrary point $p_0\in\D_1$. Let $U_0$ be an open subset of $\D_1$ with $p_0\in U_0$ such that $y|_{U_0}$ is injective and $R y(U_0)=y(V_0)$, where $V_0$ is an open subset of $\D_2$ such that $y|_{U_0}$ is injective and $R y(p_0)=y(q_0)$ holds for some $q_0\in V_0$. In this way, we obtain a conformal map $\gamma_0:U_0\rightarrow V_0$ such that the diagram
\begin{equation}\label{eq-commute-local}
 \begin{CD}
U_0@> \gamma >>V_0\\
@V{y}VV @VV{y}V\\
S^{n+2} @> R>>S^{n+2}\
\end{CD}
\end{equation}
commutes.

 \begin{corollary}\label{cor-sym-local}
 Let $c:[0,1]\rightarrow \D_1$ be any smooth curve (i.e, actually defined  and smooth on $]-\varepsilon,1+\varepsilon[$ for some $\varepsilon>0$), with $c(0)\subset U_0$. Then there exists some open subset $U$ with $c([0,1])\subset U$, $U_0\subset U$ and a unique conformal map $\gamma: U\rightarrow \D_2$ such that the diagram
\begin{equation}\label{eq-commute-local-2}
 \begin{CD}
U@> \gamma_0 >>\D_2\\
@V{y}VV @VV{y}V\\
S^{n+2} @> R>>S^{n+2}\
\end{CD}
\end{equation}
commutes.
\end{corollary}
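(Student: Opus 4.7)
The proof is by analytic continuation of $\gamma_0$ along $c$, using compactness of $[0,1]$ for a finite patching argument and completeness of the metric induced by $y$ on $\D_2$ to prevent the lifted curve from escaping. Define
\[
T=\{t\in[0,1]\,|\,\gamma_0\text{ extends to a conformal }\gamma_t:W_t\to\D_2\text{ on some open }W_t\supseteq U_0\cup c([0,t])\text{ with }y\circ\gamma_t=R\circ y\}.
\]
Since $0\in T$ by hypothesis, it suffices to show $T$ is both open and closed in $[0,1]$; the desired $(U,\gamma)$ is then read off at $t=1$.

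\textbf{Openness.} Fix $t_0\in T$ and set $q_0=\gamma_{t_0}(c(t_0))\in\D_2$. Away from the discrete branch locus of $y$, the map $y$ is a local biholomorphism onto its image; pick neighborhoods $W\ni c(t_0)$ in $\D_1$ and $W'\ni q_0$ in $\D_2$ on which $y$ is injective. Using $Ry(\D_1)=y(\D_2)$ together with continuity, shrink $W$ so that $Ry(W)\subseteq y(W')$, whereupon
\[
\gamma:=(y|_{W'})^{-1}\circ R\circ y|_W:W\longrightarrow W'
\]
is the unique conformal map making the diagram commute, and by local injectivity of $y|_{W'}$ it coincides with $\gamma_{t_0}$ on the overlap. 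Hence $[t_0,t_0+\varepsilon)\subset T$ for some $\varepsilon>0$. Isolated branch points of $y$ are dealt with by running this construction on a punctured neighborhood and then extending $\gamma$ across the puncture via Riemann's removable singularity theorem (the candidate $\gamma$ is bounded into $\D_2$).

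\textbf{Closedness and uniqueness.} For $t_n\nearrow t^*$ with $t_n\in T$, set $q_n=\gamma_{t_n}(c(t_n))$. Since $R$ is an isometry of $S^{n+2}$ and $y$ is conformal, the curve $n\mapsto q_n$ has, in the metric on $\D_2$ induced by $y$, the same length as $c|_{[0,t_n]}$, which is uniformly bounded; thus $\{q_n\}$ is Cauchy in the induced metric, and completeness forces $q_n\to q^*$ for some $q^*\in\D_2$ with $y(q^*)=Ry(c(t^*))$. Applying the openness argument at the pair $(c(t^*),q^*)$ places $t^*\in T$. Uniqueness of $\gamma$ on the resulting open set $U=U_0\cup\bigcup W_{t}$ is immediate: on each local piece $\gamma$ is forced to equal $(y|_{W'})^{-1}\circ R\circ y|_W$, and any two holomorphic extensions agreeing on the nonempty open set $U_0$ must agree on the connected $U$ by the identity theorem.

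\textbf{Main obstacle.} The delicate step is closedness: a priori, the lift $q(t)$ in $\D_2$ could accumulate at $\partial\D_2$ or at the branch locus of $y$. The branch locus being discrete is handled by the removable-singularity remark above, but the boundary-escape scenario is not a formal matter — it is precisely where the completeness hypothesis (carried over from Theorem 3.1 and built into the setup of Section 7) enters essentially. The key observation that converts compactness of $c([0,1])$ in $\D_1$ into compactness of the lift in $\D_2$ is that $R$ preserves the ambient metric on $S^{n+2}$ and hence preserves lengths of images, so $q(t)$ inherits a finite-length property and is Cauchy in the complete metric on $\D_2$. Once this is secured, the remaining bookkeeping (patching finitely many local extensions, handling branch points, verifying uniqueness) follows by standard complex-analytic tools.
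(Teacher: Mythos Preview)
Your overall strategy coincides with the paper's: an open--closed (equivalently, supremum) argument along $c$, with completeness of the induced metric on $\D_2$ supplying the closedness step via a Cauchy/finite-length argument, followed by a local-inverse construction at the limit point.

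There is, however, one genuine slip in your closedness step. You write that ``$R$ is an isometry of $S^{n+2}$'' and hence that the lifted curve has \emph{the same} length as $c|_{[0,t_n]}$. In this paper $R\in O^+(1,n+3)$ acts on $S^{n+2}$ as a \emph{conformal} transformation, not an isometry (cf.\ the beginning of Section~3), so the equality of lengths is false in general. The repair is easy and leaves your argument intact: the length of $t\mapsto q(t)$ in the induced metric on $\D_2$ equals the length of $y\circ q = R\circ y\circ c$ in $S^{n+2}$; since $R$ is a smooth diffeomorphism of the compact sphere and $y\circ c$ is smooth on a compact interval, this length is finite (bounded by $\sup\|dR\|$ on the compact set $y(c([0,1]))$ times the length of $y\circ c$). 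Completeness then yields $q_n\to q_*\in\D_2$, and the rest of your argument goes through. Your removable-singularity treatment of branch points is unnecessary here---Section~7 takes $y$ to be a genuine immersion and the induced metric complete---but it is harmless.
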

\begin{proof}
Let $$I_0=\{t>0;\exists \hbox{ open subset }  U_t\supset c([0,t]) \hbox{ such that \eqref{eq-commute-local-2} commmutes with } U \hbox{ replaced by } U_t \}.$$
Since $c[0,t_0]\subset U_0$ for some $t_0>0$, the set $I_0$ is not empty by setting $\gamma_t=\gamma_0$ and $U_t=U_0$.
Since for $t_0\leq t<t'$ we have $\gamma_t=\gamma_{t'}=\gamma_0$ on $U_0\subset U_t\cap U_{t'}$, by real analyticity we also obtain $\gamma_{t'}(x)=\gamma_t(x)$ for $x\leq t$.

Put $$t_*=\sup I_0.$$
Set $p_*=c(t_*)\in\D_1$ and $v_*=Ry(p_*)\in y(\D_2)$. Let $\{t_j\}\subset (-\varepsilon+t_*,t_*)$ for some $\varepsilon>0$ be a Cauchy sequence converging to $t_*$. Then $\{Ry(c(t_j))\}$ is a Cauchy sequence converging to $v_*$. Since locally $y$ is an isometry from $\D_2$ to $y(\D_2)$, $\{\gamma (c(t_j))\}$ is also a Cauchy sequence in $\D_2$ and has a limit $q_*\in \D_2$ with $y(q_*)=v_*$. It is easy to see that $q_*$ is independent of the choice of $\{t_j\}$ and hence well-defined. Let $V_*\subset\D_2$ be an open subset containing $q_*$ such that $y(V_*)$ is injective.
Since $p_*=c(t_*)\in\D_1$, by shrinking $V_*$ if necessary, there exists some open set $U_*\subset \D_1$ of $p_*$ such that $y|_{U_*}$ is injective, and $Ry(U_*)=y(V_*)$. Therefore we define uniquely a conformal map $\gamma_*:U_*\rightarrow V_*$. For any $t_j$ as above, on $U_{t_j}\cap U_*$ the maps $\gamma_{t_j}$ and $\gamma_*$ coincide. Therefore $\gamma_*$ extends the maps $\gamma_{t_j}$ to the connected component of $\bigcup_{t_j}(U_{t_j}\cup U_*)$ containing $c([0,t_*])$. Hence $t_*=1$.
\end{proof}
\begin{remark}
In the situation of the above corollary, we will say ``$\gamma$ extends $\gamma_0$ along $c$". The proof of Corollary \ref{cor-sym-local} shows that we can assume that all such $\gamma$ are defined on $U_1$, where we replace $U_*$ by $U_1$, since $t_*=1$.
\end{remark}
 Moveover we obtain:
\begin{corollary} If $c_0$ and $c_1$ are homotopic curves connecting $p_0$ and $p_1$ in $\D_1$, then the unique extensions $\hat\gamma_0$ and $\hat\gamma_1$ coinciding on $U_0$  with $\gamma_0$ along $c_0$ and $c_1$ respectively coincide in the neighborhood $\hat U$ of $p_1$.
\end{corollary}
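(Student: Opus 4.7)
The plan is to run the classical monodromy argument, with the previous corollary supplying the path-lifting input and, crucially, its uniqueness. First I would fix a homotopy $H:[0,1]\times[0,1]\rightarrow \D_1$ from $c_0$ to $c_1$ rel the endpoints $p_0,p_1$, and set $c_t(s)=H(s,t)$. For each $t\in[0,1]$ the previous corollary produces a unique extension $\hat\gamma_t$ of $\gamma_0$ along $c_t$, defined on some open set $U_t\supset c_t([0,1])\cup U_0$; by the remark following that corollary we may in fact take $p_1\in U_t$, so that $\hat\gamma_t(p_1)$ makes sense for every $t$.

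Next I would introduce
\[
T=\{\, t\in[0,1] : \hat\gamma_t \text{ coincides with } \hat\gamma_0 \text{ on some neighborhood of } p_1\,\},
\]
note that $0\in T$, and show that $T$ is both open and closed. The uniform step behind both is the following: fix $t$ and cover the compact image $c_t([0,1])$ by finitely many open discs $W_1,\dots,W_N\subset\D_1$ on each of which $y$ is injective and admits a local conformal ``lift through $R$'', i.e.\ a map $\gamma_j:W_j\rightarrow V_j\subset\D_2$ with $V_j$ open, $y|_{V_j}$ injective, and $Ry|_{W_j}=y\circ\gamma_j$. Uniform continuity of $H$ on the compact square gives a $\delta>0$ such that $|t'-t|<\delta$ implies $c_{t'}([0,1])\subset\bigcup_j W_j$. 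Along any such $c_{t'}$ the extension $\hat\gamma_{t'}$ is forced, by the uniqueness part of the previous corollary, to equal the concatenation of the local pieces $\gamma_j$ in the order the curve visits them; the same is true for $\hat\gamma_t$. In particular $\hat\gamma_t$ and $\hat\gamma_{t'}$ agree on a common neighborhood of $p_1$. This shows simultaneously that $T$ and $[0,1]\setminus T$ are open, hence $T=[0,1]$, and $1\in T$ is exactly the claim.

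The only real subtlety I anticipate is the consistency of the local lifts on overlaps $W_i\cap W_j$: one must check that $\gamma_i=\gamma_j$ there, so that the concatenation argument is actually well-posed and does not depend on the chosen cover. This is handled by shrinking the $W_j$ so that each connected component of $W_i\cap W_j$ is simply connected and meets $U_0$ after transport, and then invoking the uniqueness portion of the previous corollary applied to a short arc inside the overlap: both $\gamma_i$ and $\gamma_j$ restrict to extensions along this arc of the same germ, so they must coincide. Once this gluing is installed, the argument is purely a Lebesgue-number style compactness argument and requires no further Willmore-specific input.
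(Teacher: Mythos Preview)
Your proposal is correct and follows the same monodromy/connectedness scheme as the paper: parametrize the homotopy, define the set of parameters for which the extension agrees with $\hat\gamma_0$ near $p_1$, and show it is nonempty, open and closed (the paper phrases this as a supremum argument, but that is equivalent).

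The difference is in the local step. You introduce an auxiliary finite cover of $c_t([0,1])$ by small discs $W_j$ carrying local lifts $\gamma_j$, then use a Lebesgue-number argument to drag nearby curves $c_{t'}$ through the same cover. This creates the overlap-consistency issue you flag: a priori there may be several local lifts $\gamma_j$ on a given $W_j$, and your last paragraph handles their coherence only sketchily. The paper avoids this complication entirely. It simply uses the open set $U_t\supset c_t([0,1])\cup U_0$ already furnished by the previous corollary as the tube around $c_t$: for $t'$ close enough to $t$ one has $c_{t'}([0,1])\subset U_t$, and then $\hat\gamma_t|_{U_t}$ is itself an extension of $\gamma_0$ along $c_{t'}$, so the uniqueness clause of the previous corollary forces $\hat\gamma_{t'}=\hat\gamma_t$ near $p_1$. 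In other words, the gluing of local lifts that you reconstruct by hand is already packaged into $U_t$. Your route works once the overlap consistency is made precise (e.g.\ by taking the $\gamma_j$ to be restrictions of $\hat\gamma_t$ rather than arbitrary local lifts), but the paper's version is shorter and avoids the issue altogether.
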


\begin{proof} Without loss of generality we can assume $c_0(0)=c_1(0)=p_0,$ $c_0(1)=c_1(1)=p_1$. Since $c_0$ and $c_1$ are homotopic, there exists a continuous map
$\beta:[0,1]\times[0,1]\rightarrow \D_1$ such that
$$\beta(0,t)=c_0(t), \ \beta(1,t)=c_1(t), \ \beta(s,0)=p_0, \beta(s,1)=p_1.$$
Set $c_s(t)=\beta(s,t)$ and let  $\hat\gamma_s$ be the conformal extension of $\gamma_0$ along $c_s(t)$ which is defined on $U_0$.

By Corollary \ref{cor-sym-local} and the remark above, there exists an open and connected subset $W_0\supset ( U_0\cup c_0([0,1]))$ and an extension $\hat\gamma_0$ of $\gamma_0$ to $W_0$ such that the  diagram
\eqref{eq-commute-local-2} commutes. By continuity, there exists some $\varepsilon>0$ such that $\beta([0,\varepsilon)\times[0,1])\subset W_0$. So for any
fixed $s\in[0,\varepsilon)$, $c_s(t)=\beta(s,t)$ maps $[0,1]$ into $W_0$ and therefore the conformal extension $\hat\gamma_s$ along $c_s(t)$ coincides with
$\hat\gamma_0$ on $W_0$. In particular, $\hat\gamma_s=\hat\gamma_0$ on the open subset $ W_0$ and $p_1\in W_0$. As a consequence, the set
$$\hat I_0=\{s>0; \hat\gamma_s=\hat\gamma_0 \hbox{ on an open subset containing }  p_1\}$$ is not empty.
Set $$s_*=\sup \hat I_0\leq 1.$$
By an argument analogous to the one given just above we see that  there exists some $\varepsilon_*>0$ such that for any
$s\in(-\varepsilon_*+s_*,s_* + \varepsilon_*)$ we have  $\hat\gamma_{s_*}=\hat{\gamma}_{s}$ on an open subset containing $p_1$. Since for any $s\in(-\varepsilon_*+s_*,s_* + \varepsilon_*)$, $\hat\gamma_{s}=\hat{\gamma}_{0}$ on an open subset containing $p_1$, we see that
$\hat\gamma_{s_*}=\hat{\gamma}_{0}$ on an open subset containing $p_1$. Hence $s_*=1$.
\end{proof}

    Finally, we define a map $\gamma:\D_1\rightarrow \D_2$ by fixing $p_0$ and defining $\gamma$ at some point $p\in\D$ by the value of some extension of some curve connecting $p_0$ and $p_-1$. Since $D_1$ is
simply-connected, any two such curves are homotopic in $\D_1$ and thus, by the above corollary, attain the some value at $p_1$. This way, we obtain a well-defined, conformal map $\gamma:\D_1\rightarrow \D_2$ satisfying \eqref{eq-commute-D} as claimed.\\

Note that a similar result is proven in \cite{Helgason} for isometric mappings between complete manifolds (see also \cite{Do-Ha2}). But here we are dealing with conformal maps so that one needs to adjust the proofs in \cite{Helgason} and \cite{Do-Ha2} as above.\\


\ \\
{\bf Acknowledgements}\ \
Part of this work was done when the second named author visited the Department of Mathematics of Technische Universit\"{a}t  M\"{u}nchen. He would like to express his sincere gratitude for both the hospitality and financial support. The second named author was partly supported by the Project 11201340 of NSFC  and
the Fundamental Research Funds for the Central Universities.

{\small

\def\refname{Reference}

}


\begin{thebibliography}{0}


\bibitem {Bryant1984} Bryant, R. {\em A duality theorem for Willmore surfaces,} J.
Diff.Geom. 20(1984), 23-53.
\bibitem{BuGu} Burstall,  F.E., Guest, M.A., {\em Harmonic two-spheres in compact symmetric spaces, revisited,} Math. Ann. 309 (1997), 541-572.




\bibitem{Do;Osaka} Dorfmeister, J., {\em Loop groups and surfaces with symmetries} Riemann surfaces, harmonic maps and visualization, 29-39, OcamiMunic.Univ.Press, Osake 2010.


\bibitem {Do-Gr-Sz} Dorfmeister, J., Gradl, H., Szmigielski, J. {\em
Systems of PDEs obtained from factorization in loop groups,} Acta Appl. Math. 53 (1998), 1-58.


\bibitem {Do-Ha} Dorfmeister, J., Haak, G. {\em Meromorphic potentials and smooth surfaces of constant mean curvature,} Math. Z., Vol. 224, No.4, 603-640.


\bibitem {Do-Ha2}
Dorfmeister, J., Haak, G. {\em On symmetries of constant mean curvature surfaces. I. General theory,} Tohoku Math. J. (2)
50 (1998), 437-454.

\bibitem {Do-Ha3}
Dorfmeister, J., Haak, G. {\em  On symmetries of constant mean curvature surfaces. II. symmetries in
a Weierstrass-type representation,} Int. J. Math. Game Theory and Algebra 10, (2000), 121-146.

\bibitem {Do-Ha5}
Dorfmeister, J., Haak, G. {\em  Construction of non-simply connected CMC surfaces via dressing}, J. Math. Soc. Japan 55, (2003), 335-364


\bibitem{DPW} Dorfmeister, J., Pedit, F., Wu, H., {\em Weierstrass type representation of harmonic maps
into symmetric spaces,} Comm. Anal. Geom. 6 (1998), 633-668.

\bibitem{DoWa1} Dorfmeister, J.,  Wang, P., {\em Willmore surfaces in spheres via loop groups I: generic cases and some examples}, arXiv:1301.2756.



\bibitem{DoWa2} Dorfmeister, J.,  Wang, P., {\em On harmonic maps of finite uniton type into non-compact inner symmetric spaces}, in preparation.


\bibitem{DoWa-equ} Dorfmeister, J.,  Wang, P., {\em On equivariant Willmore surfaces in spheres}, in preparation.


 \bibitem {Ejiri1988} Ejiri, N. {\em  Willmore surfaces with a duality in $S^{n}(1)$,} Proc. London Math. Soc. (3), 57(2) (1988),
 383-416.


\bibitem{Forster} Forster, O., {\em Lectures on Riemann Surfaces,} Springer-Verlag, Berlin, Heidelberg, New York (1977).

\bibitem{Gu2002}  Guest,  M.A. {\em An update on Harmonic maps of finite uniton number, via the Zero Curvature
Equation,} Integrable Systems, Topology, and Physics: A Conference on Integrable Systems
in Differential Geometry (Contemp. Math., Vol. 309, M. Guest et al., eds.), Amer. Math.
Soc., Providence, R. I. (2002), 85-113.

	
 \bibitem{Helgason}
Helgason S., {\em Differential geometry, Lie groups, and symmetric spaces,} volume
34 of Graduate Studies in Mathematics. AMS, Providence, RI, 2001. Corrected reprint of the 1978 original.


 \bibitem{Ke1} Kellersch, P. {\em  Eine Verallgemeinerung der Iwasawa Zerlegung in Loop Gruppen}, Dissertation, Technische Universit\"{a}t M\"{u}nchen, 1999.  http://www.mathem.pub.ro/dgds/mono/ke-p.zip.



\bibitem{Ma2005} Ma, X. {\em  Isothermic and S-Willmore surfaces as solutions to a Problem of Blaschke}, Results in Math., 48 (2005), 301-309.

\bibitem{Mon} Montiel, S. {\em  Willmore two spheres in the four-sphere,} Trans.
Amer.Math. Soc. 2000, 352(10), 4469-4486.
\bibitem{PS} Pressley A.N., Segal, G.B., {\em Loop Groups,} Oxford University Press, 1986.

\bibitem{Rohrl} R\"{o}hrl, H. {\em Holomorphic fiber bundles over Riemann surfaces,} Bull. AMS 68, No.6 (1962), 125-160.


\bibitem{Wang-1} Wang, P., {\em Willmore surfaces in spheres via loop groups II: a coarse classification of Willmore two-spheres}, in preparation.

\bibitem{Wang-S4}   Wang, P., {\em A Weierstrass type representation of isotropic Willmore surfaces in $S^4$}, in preparation.



\bibitem{Wu} Wu, H.Y. {\em  A simple way for determining the normalized potentials for harmonic maps,} Ann.
Global Anal. Geom.  17 (1999), 189-199.


\bibitem{Yang} Yang, K. {\em Meromorphic functions on a compact Riemann surface and associated complete minimal surfaces}, Proc. Amer. Math. Soc., 1989, 105(3), 706-711.

\end{thebibliography}
\end{document}